\documentclass[11pt]{amsart}
\usepackage{amssymb,latexsym, amscd}
\usepackage[square, numbers]{natbib}
\usepackage[all]{xy}
\usepackage{graphicx}
\usepackage{mathrsfs}
\usepackage{amsmath}


\vfuzz2pt 

 \newtheorem{theorem}{Theorem}[section]
 \newtheorem{cor}[theorem]{Corollary}

 \newtheorem{lemma}[theorem]{Lemma}
 \newtheorem{proposition}[theorem]{Proposition} \theoremstyle{definition}
 \newtheorem{definition}[theorem]{Definition}
 \theoremstyle{definition}
 
 \theoremstyle{remark}
 \newtheorem{rem}[theorem]{Remark}
 \numberwithin{equation}{section}

\newcommand{\ben}{\begin{equation}}
\newcommand{\een}{\end{equation}}

\newcounter{commentcounter}


\newcommand{\integer}{\ensuremath{{\mathbb Z}}}

\newcommand{\complex}{\ensuremath{{\mathbb C}}}



\newcommand{\DD}{{\mathcal D}}

\newcommand{\XX}{{\mathcal X}}
\newcommand{\BB}{{\mathcal B}}

\newcommand{\QQ}{{\mathcal Q}}
\newcommand{\UU}{{\mathcal U}}

\newcommand{\WW}{{\mathcal W}}

\newcommand{\FF}{{\mathcal F}}
\newcommand{\EE}{{\mathcal E}}

\newcommand{\CC}{\mathcal{C}}

\newcommand{\MM}{\mathcal{M}}
\newcommand{\HH}{\mathcal{H}}

\newcommand{\OO}{\mathcal{O}}
\newcommand{\YY}{\mathcal{Y}}

\newcommand{\Tor}{\mathrm{Tor}}

\newcommand{\target}{\mathsf{target}}
\newcommand{\source}{\mathsf{source}}
\newcommand{\identity}{\mathsf{identity}}
\newcommand{\inverse}{\mathsf{inverse}}
\newcommand{\composition}{\mathsf{comp}}

\newcommand{\To}{\longrightarrow}


\newcommand{\Bun}{\ensuremath{{\mathrm{Bun}}}}
\newcommand{\Fred}{\ensuremath{{\mathrm{Fred}}}}

\begin{document}

\title[Universal twist in Equivariant K-theory]
{Universal twist in Equivariant K-theory for proper and discrete actions}

\author[N. B\'arcenas, J. Espinoza, M. Joachim and B. Uribe]{No\'e B\'arcenas, Jes\'us Espinoza, Michael Joachim and Bernardo Uribe }
\thanks{The first author was partially supported by the grant SFB 878 ``Groups, geometry  and
actions" and the Hausdorff Center for Mathematics. The first and
fourth author were partially supported with funds from the Leibniz
prize of Prof. Dr. Wolfgang L\"uck. Part of this work was done while the fourth author
was financially supported by the Alexander Von Humboldt Foundation.}

\address{Mathematisches Institut, Universit\"at Bonn, Endenicher Allee 60, D-53115 Bonn, GERMANY}
\email{barcenas@math.uni-bonn.de}
\address{Departamento de matem\'aticas, Universidad de Sonora,
 Blvd Lu\'is Encinas y Rosales S/N, Colonia Centro. Edificio 3K-1. C.P. 83000. Hermosillo, Sonora, M\'EXICO }
 \email{goku.espinoza@gmail.com}
 \address{Mathematisches Institut, Westf\"alische Wilhelms-Universit\"at,
Einsteinstrasse 62, 48149 M\"unster, GERMANY}
 \email{joachim@math.uni-muenster.de}
\address{ Departamento de Matem\'{a}ticas, Universidad de los Andes,
Carrera 1 N. 18A - 10, Bogot\'a, COLOMBIA}
 \email{ buribe@uniandes.edu.co}

 \subjclass[2010]{ Primary 19L47, 19L50. Secondary 55M35}

\keywords{Twisted equivariant K-theory, discrete and proper actions, projective unitary operators}

\begin{abstract}
We define equivariant projective unitary stable bundles as 
the appropriate twists when defining K-theory as sections of bundles with fibers
the space of Fredholm operators over a Hilbert space.
We construct universal equivariant projective unitary stable bundles for the orbit types, and we use 
a specific model for these local universal spaces in order to glue them 
to obtain a universal  equivariant projective unitary stable bundle for discrete and proper actions.
 We determine
the homotopy type of the universal  equivariant projective unitary stable bundle, and we show that 
 the isomorphism classes of equivariant projective unitary
stable bundles are classified by the third
equivariant integral cohomology group. The results contained in
this paper extend and generalize results of Atiyah-Segal.
\end{abstract}

\maketitle
\section*{Introduction}

Topological K-theory is a generalized cohomology theory
\cite{Atiyah-book} that in the case of compact spaces can be
represented by isomorphism classes of vector bundles. A remarkable
theorem of Atiyah \cite{Atiyah-Fredholm} and J\"anich
\cite{Janich} tells us that $K^0(X) \cong
\pi_0(Maps(X,\Fred(\HH)_{\rm{norm}})$, namely that the K-theory
groups of a compact space $X$ can be alternatively obtained as the
homotopy classes of maps from the space $X$ to the space
$\Fred(\HH)$ of Fredholm operators on a fixed separable Hilbert
space $\HH$, whenever $\Fred(\HH)$ is endowed with the norm
topology. Note that the space of maps from $X$ to $\Fred(\HH)$ can
also be defined as the space of sections of the trivial bundle
$\Fred(\HH) \times X \to X$; this simple remark leads the way to
consider spaces of sections of non trivial bundles over $X$ with
fibers $\Fred(\HH)$, and by doing so we reach one of the
definitions of the twisted K-theory groups \cite{AtiyahSegal}. Let us see how this works:
the structural group of a bundle with
fiber $\Fred(\HH)$ will be the group $\UU(\HH)$ of unitary
operators on the Hilbert space endowed with the norm topology acting
 on $\Fred(\HH)$ by conjugation. As the conjugation by
complex numbers of norm one is trivial, the action of $\UU(\HH)$
factors through the group of projective unitary operators
$P\UU(\HH):=\UU(\HH)/S^1$. Therefore any principal
$P\UU(\HH)$-bundle, or in other words, any projective unitary
bundle $P\UU(\HH) \to P \to X$, provides the essential information
in order to define a bundle over $X$ with fibers $\Fred(\HH)$ by
taking the associated bundle $\Fred(P):= P \times_{P\UU(\HH)}
\Fred(\HH)$. With these bundles at hand, the twisted K-theory
groups of $X$ twisted by $P$ are defined as
$$K^{-i}(X;P) := \pi_i(\Gamma(X, \Fred(P)))$$
where $\Gamma(X, \Fred(P))$ denotes the space of sections of the
bundle $\Fred(P)$.

The equivariant version of the construction presented above turns
out to be very subtle. Not only there are issues with the topology
of the spaces $\Fred(\HH)$ and $\UU(\HH)$, as it was noted  and
resolved in \cite{AtiyahSegal}, but moreover there is not a
Hilbert space endowed with a group action that will serve as a
universal equivariant Hilbert space for projective
representations. This last fact makes the classification of twists
for equivariant K-theory a more elaborate task, that we have
pursued in this paper in a systematic way, and whose results are
the main point of this publication.

The paper is divided in three chapters. Chapter \ref{section PU(H)} is
devoted to understanding the equivariant projective unitary bundles
over a point, which are classified by the moduli space of homomorphisms $f: G \to
P\UU(\HH)$ from a compact Lie group  to the group of projective
unitary operators,  such that the induced action of the group
$\widetilde{G}:=f^*\UU(\HH)$ on $\HH$, makes $\HH$ into a
representation of $\widetilde{G}$ on which all its irreducible
representations where $S^1$ acts by multiplication, appear
infinitely number of times. It turns out that the equivariant homotopy groups
$\pi_i$ of this moduli space are isomorphic to the cohomology
groups $H^{3-i}(BG, \integer)$ for $i \geq 0$. We note that some
of the results of this chapter were already in \cite{AtiyahSegal},
but we believe that the main construction of this chapter, which
is the homotopy quotient
$$EP\UU(\HH) \times_{P\UU(\HH)} Hom_{st}(G, P\UU(\HH)),$$
where $Hom_{st}(G, P\UU(\HH))$ is the space of homomorphisms on
which all representations of $\widetilde{G}$ appear, has not been
studied in the literature before.

In Chapter \ref{section Equivariant stable bundles} we proceed to define the projective unitary stable
and equivariant bundles over $G$ spaces, and devote the rest of the chapter to the 
classification of the equivariant projective unitary stable bundles over the orbit types $G/K$ for 
$K$ compact subgroup of $G$. We show that the universal projective unitary bundle  over the orbit type $G/K$
can be constructed from the moduli space of stable homomorphisms from $K$ to $P\UU(\HH)$, and we furthermore
show that this model serves as the universal moduli space for $K$-equivariant projective unitary stable bundles over
trivial $K$-spaces. The gluing of these local universal bundles built out from stable homomorphisms
turns out very subtle and it is not at all clear how to do it. Therefore we propose larger models for the universal bundles of the
orbit types, models  that are built out of functors instead of homomorphisms; the advantage of these larger models is that they
are constructed in such a way that the gluing becomes very clear, but the disadvantage is that we have to restrict
our attention to discrete groups since these larger models may not have the desired properties in the general case.

In Chapter \ref{chapter universal bundle} we restrict our attention to the case on which $G$ is discrete and 
it acts properly. We devote the whole chapter to 
the construction of the universal projective unitary
stable and equivariant bundle; this is the main result of
this paper and can be found in Theorem \ref{theorem the universal bundle}. Using
stable functors from the groupoid $G \ltimes G/K$ to the group $P\UU(\HH)$, we construct a category that we denoted
$\widetilde{D}_{G/K}$  endowed with a free right $P\UU(\HH)$ action and a left $N(K)/K$ action, whose geometrical
realization is the universal equivariant projective unitary stable bundle over the orbit type $G/K$. Then we use the fact that
the categories $\widetilde{D}_{G/K}$ were defined from functors from $G \ltimes G/K$ in order to explicitely define the gluing of these
universal spaces, and therefore obtaining the universal projective unitary
stable and equivariant bundle. We finish this chapter by calculating the homotopy type of this universal space,
and in particular we show that 
the isomorphism classes of equivariant projective unitary
stable bundles are classified by the third
equivariant integral cohomology group..

Finally, in Chapter \ref{chapter Twisted equivariant K-theory} we define the twisted equivariant
K-theory groups for proper actions using the
projective unitary stable and equivariant bundles defined in
Chapter \ref{section Equivariant stable bundles}. We show that this twisted equivariant K-theory
satisfies the axioms of a generalized cohomology theory, that
satisfies Bott periodicity, that is endowed with an {\it{induction
structure}} as in \cite[Section 1]{Lueck1} and that restricted to
orbit types $G/K$ recovers the Grothendieck group
$R_{S^1}(\widetilde{G})$ of representations of $\widetilde{G}$ on
which $S^1$ acts by multiplication. We finish the paper by
relating our definition of twisted equivariant K-theory groups to
the definition of Dwyer in \cite{Dwyer} which is given through
projective representations characterized by discrete torsion.

\subsection{Notation:} \label{Notation} $G$ will be the global group and $K \subset
G$ will be a compact subgroup of $G$; $BG$ will be the classifying
space of $G$-principal bundles and it will be defined as $EG/G$
where $EG$ is the universal $G$-principal bundle. Letters in
calligraphic style will denote groupoids or categories; and for a
topological category $\CC$  we will denote by $| \CC |$ the
geometric realization of the category $\CC_{\bf N}$, that is, the
associated category unravelled over the ordered set ${\bf N}$ of
natural numbers in the following way: $\CC_{\bf N}$ is the
subcategory of ${\bf N} \times \CC$ obtained by deleting all
morphisms of the form $(n,c) \to (n,c')$ except for the identity
morphisms. Having defined the geometrical realization in this way
we have that $|G|$ is a the classifying space for principal $G$-bundles,
 and the map $|G\times G|\to |G|$ is a principal
$G$-bundle  where
$G\times G$ denotes the product category of the set $G$; for
details  see \cite[page 107]{Segal}.

\subsection{Acknowledgements}\label{ackref}
The first author was partially supported  by  the  SFB  878 `Groups, Geometry  and  Actions", and by the Leibniz prize of Prof. Dr. Wolfgang L\"uck.
The fourth author acknowledges and thanks the financial support of the 
Alexander Von Humboldt Foundation. Part of this work was carried out while the fourth author held a ``Humboldt
Research Fellowship for experienced researchers".

\section{Properties of the group $P\UU(\HH)$ of projective unitary
operators}\label{section PU(H)}
\subsection{Topology of $P\UU(\HH)$}

Let $\HH$ be a separable Hilbert space and 
$$\UU(\HH):= \{ U : \HH \to \HH | U\circ U^*= U^*\circ U = \mbox{Id} \}$$ the group
of unitary operators acting on $\HH$. Let ${\rm End}(\HH)$ denote the space of endomorphisms
of the Hilbert space and endow ${\rm End}(\HH)_{c.o.}$ with the compact open topology. Consider the inclusion
\begin{align*}
\UU(\HH) &\to {\rm End}(\HH)_{c.o.} \times {\rm End}(\HH)_{c.o.}\\
U &\mapsto (U,U^{-1})
\end{align*}
and induce on $\UU(\HH)$ the subspace topology. Denote the space of unitary operators with this induced topology
by $\UU(\HH)_{c.o.}$ and note that this is different from the usual compact open topology on $\UU(\HH)$.

It was pointed out in \cite[Appendix 1]{AtiyahSegal} that the group $\UU(\HH)_{c.o.}$ fails to be a topological group, since the composition
of operators is only continuous on compact subspaces. Knowing this, we can endow $\UU(\HH)$ with the compactly generated topology
induced by the topology of $\UU(\HH)_{c.o.}$ and in this way the composition of operators becomes continuous; denote by
 $\UU(\HH)_{c.g.}$ the group of unitary operators with the compactly generated topology induced by $\UU(\HH)_{c.o.}$ 
 
  Since in \cite[Prop. A2.1]{AtiyahSegal} there was constructed
 a homotopy $h: \UU(\HH)_{c.o.} \times[0,1] \to \UU(\HH)_{c.o.}$ such that $h(U,1)=U$ and $h(U,0)=constant$ with the property that $h$ is 
 continuous on compact sets, then the same map $h: \UU(\HH)_{c.g.} \times[0,1] \to \UU(\HH)_{c.g.}$ is continuous in the compactly generated topology and 
 therefore it proves the contractibility of the space $\UU(\HH)_{c.g.}$. Summarizing,
 \begin{proposition} \label{Proposition contractibility of U(H)}
 Let $\UU(\HH)_{c.g.}$ denote the group of unitary operators $\UU(\HH)$ endowed with the compactly generated topology of 
 $\UU(\HH)_{c.o.}$. Then $\UU(\HH)_{c.g.}$ is a contractible topological group.
 \end{proposition}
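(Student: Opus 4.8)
The plan is to exploit the universal property of the compactly generated ($k$-ified) topology: a set is closed in $\UU(\HH)_{c.g.}$ precisely when its intersection with every compact subspace of $\UU(\HH)_{c.o.}$ is closed, and consequently a map out of $\UU(\HH)_{c.g.}$ is continuous as soon as its restriction to each compact subspace is continuous. Since $k$-ification leaves the compact subspaces and their subspace topologies unchanged, every continuity-on-compacta statement quoted from \cite{AtiyahSegal} can be fed directly into this criterion. First I would record two preliminaries used repeatedly: (i) the compact subspaces of $\UU(\HH)_{c.g.}$ coincide, as spaces, with those of $\UU(\HH)_{c.o.}$; and (ii) any compact subset $K$ of a product has compact projections $\pi_1(K), \pi_2(K)$ and satisfies $K \subseteq \pi_1(K) \times \pi_2(K)$, so that the categorical product $\UU(\HH)_{c.g.} \times_k \UU(\HH)_{c.g.}$ is $k$-generated by the products $C_1 \times C_2$ of compact subsets (here one uses that $\UU(\HH)_{c.o.}$, being a subspace of the Hausdorff space ${\rm End}(\HH)_{c.o.} \times {\rm End}(\HH)_{c.o.}$, is Hausdorff).

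To see that $\UU(\HH)_{c.g.}$ is a topological group I would treat inversion and composition separately. The defining embedding $U \mapsto (U, U^{-1})$ is symmetric, so inversion is the restriction of the coordinate-swap homeomorphism of ${\rm End}(\HH)_{c.o.} \times {\rm End}(\HH)_{c.o.}$; hence $U \mapsto U^{-1}$ is already continuous on $\UU(\HH)_{c.o.}$ and, by functoriality of $k$-ification, stays continuous on $\UU(\HH)_{c.g.}$. For composition, the quoted result of Atiyah--Segal gives continuity of the multiplication $m$ on every compact subspace of the $c.o.$-product, in particular on every product $C_1 \times C_2$ of compact sets. By (ii) an arbitrary compact $K \subseteq \UU(\HH)_{c.g.} \times_k \UU(\HH)_{c.g.}$ lies inside such a product, so $m|_K$ is continuous; the $k$-space criterion then upgrades this to genuine continuity of $m \colon \UU(\HH)_{c.g.} \times_k \UU(\HH)_{c.g.} \to \UU(\HH)_{c.g.}$, establishing the topological group structure.

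For contractibility I would transcribe the Atiyah--Segal homotopy $h$, which is continuous on compact subsets of $\UU(\HH)_{c.o.} \times [0,1]$. Since $[0,1]$ is compact, hence locally compact Hausdorff, the ordinary product $\UU(\HH)_{c.g.} \times [0,1]$ is already compactly generated and each of its compact subsets lies in some $C \times [0,1]$ with $C$ compact; the same criterion therefore promotes $h$ to a genuinely continuous homotopy $\UU(\HH)_{c.g.} \times [0,1] \to \UU(\HH)_{c.g.}$. Its endpoints $h(\,\cdot\,,1) = \mathrm{id}$ and $h(\,\cdot\,,0) = \mathrm{const}$ exhibit the contraction to a point.

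The step I expect to be the main obstacle is the careful handling of the product topology in the continuity of $m$: one must insist that the relevant product is the categorical ($k$-ified) product of compactly generated spaces rather than the naive product, and apply the lemma that compact subsets of a product are contained in products of compact subsets (which is where Hausdorffness of $\UU(\HH)_{c.o.}$ enters). By comparison, the continuity of inversion and the promotion of the contracting homotopy $h$ are routine, since for the latter the compact factor $[0,1]$ makes the ordinary and $k$-ified products agree.
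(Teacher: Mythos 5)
Your proposal is correct and follows essentially the same route as the paper: both arguments quote the Atiyah--Segal facts that multiplication (and the contracting homotopy $h$) are continuous on compact subspaces of $\UU(\HH)_{c.o.}$, and then upgrade these to genuine continuity via the compactly generated topology. The only difference is that you spell out the $k$-space bookkeeping the paper leaves implicit --- the categorical ($k$-ified) product, the lemma that compacta in a Hausdorff product sit inside products of compacta, and the continuity of inversion via the symmetric embedding $U \mapsto (U,U^{-1})$ --- all of which is accurate.
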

 \begin{definition}
 Let $P\UU(\HH)$ be the projectivization of $\UU(\HH)$ and endow it with the quotient topology of the quotient $\UU(\HH)_{c.g.}/S^1$.
  The topological group $P \UU(\HH)$ will be called the {\it group of
projective unitary operators} and it fits into the short exact
sequence of topological groups
$$1 \To S^1 \To \UU(\HH)_{c.g.} \To P\UU(\HH) \To 1.$$
\end{definition}

Since we know that $\UU(\HH)_{c.g.}$ is contractible,
then we have that the homotopy type of $P \UU(\HH)$ is the
one of an Eilenberg-Maclane space $K(\integer,2)$ and therefore
the underlying space of $P \UU(\HH)$ is a universal space for
complex line bundles.

\begin{rem}
The norm topology on $\UU(\HH)$ is an inadequate topology for the purpose of constructing equivariant projective unitary bundles.
For instance take $\HH=L^2(S^1)$ and define the translation action $(R_\lambda f)(\theta):= f( \theta \lambda^{-1})$ for $\lambda \in S^1$ and $f \in L^2(S^1)$. This action induces a homomorphism $S^1 \to \UU(\HH)$, $\lambda \mapsto R_\lambda$ which is not continuous whenever $\UU(\HH)$ is endowed with the norm topology; this follows from the fact that for $\lambda, \sigma \in S^1$ with $\lambda \neq \sigma$ we have $||R_\lambda - R_\sigma|| \geq {\rm{sup}}_{k\in \integer}\{|\lambda^k-\sigma^k|\} >1$.
\end{rem}

\subsection{$S^1$-central extensions}
The group structure on $P \UU(\HH)$ also permits to define $S^1$-central extensions of a group $G$ out of continuous homomorphisms
from $G$ to $P \UU(\HH)$ in the following way.

Let $G$ be a Lie group and $Ext(G, S^1)$ be the isomorphism
classes of $S^1$-central extensions of $G$ $$1 \to S^1 \to
\widetilde{G} \to G \to 1$$ where $\widetilde{G}$ also has the
structure of an $S^1$-principal bundle over $G$.

 Let us define the
map $\Psi$ from the space of continuous homomorphisms from $G$ to
$P\UU(\HH)$ endowed with the compact open topology, to the set of
isomorphism classes of $S^1$-central extensions of $G$
\begin{eqnarray*}
\Psi : Hom(G, P \UU(\HH))&  \to & Ext(G, S^1)\\
a : G \to P \UU(\HH) & \mapsto & \widetilde{G} : = a^* \UU(\HH),
\end{eqnarray*}
where $\widetilde{G}$ and $\widetilde{a}$ denote respectively the
Lie group and the continuous homomorphism defined by the pullback
square
$$\xymatrix{
\widetilde{G} \ar[r]^{\widetilde{a}} \ar[d] & \UU(\HH) \ar[d] \\
G \ar[r]^a & P\UU(\HH).}
$$

\begin{lemma} \label{lemma Psi surjective}
For $G$ a compact Lie group, the map $$\Psi : Hom(G, P \UU(\HH))
\to  Ext(G, S^1)$$ is surjective.
\end{lemma}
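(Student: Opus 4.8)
The plan is to produce, for an arbitrary central extension $1\to S^1\to \widetilde{G}\to G\to 1$, a continuous homomorphism $a\colon G\to P\UU(\HH)$ whose pullback $a^*\UU(\HH)$ is isomorphic to $\widetilde{G}$, which is exactly the statement that $[\widetilde{G}]$ lies in the image of $\Psi$. Since $G$ is compact, the extension $\widetilde{G}$ is again a compact Lie group. The guiding idea is that a central $S^1$-extension is the same datum as a projective representation: if I can realize $\widetilde{G}$ on $\HH$ by unitaries in such a way that the central circle $S^1\subset\widetilde{G}$ acts by scalar multiplication through the standard character, then this representation descends to a homomorphism out of $G$ into $P\UU(\HH)$, and the pullback reconstructs the extension.

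First I would build a unitary representation $\rho\colon\widetilde{G}\to\UU(\HH)$ on which each $z\in S^1$ acts as $z\cdot\mathrm{Id}$. Concretely, I decompose $L^2(\widetilde{G})$ under the (central, hence two-sided) action of $S^1$ into weight spaces and take $\HH$ to be the weight-one summand; equivalently $\HH=\Gamma_{L^2}(L)$ is the space of $L^2$ sections of the associated line bundle $L=\widetilde{G}\times_{S^1}\complex$ over $G=\widetilde{G}/S^1$. Left translation makes $\HH$ a unitary $\widetilde{G}$-representation, and because $S^1$ is central the central circle acts by the scalar $z$. The essential point is that $\HH\neq 0$: the $L^2$ sections of a complex line bundle over the nonempty compact manifold $G$ form an infinite-dimensional separable Hilbert space, which I may therefore identify with the fixed ambient $\HH$.

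Strong continuity of $\rho$ is Peter--Weyl, but I must upgrade it to continuity into $\UU(\HH)_{c.g.}$. Since every unitary is an isometry, the family $\UU(\HH)$ is uniformly equicontinuous, so on it pointwise (strong) convergence coincides with uniform convergence on compact subsets; hence the strong topology agrees there with the compact--open topology $\UU(\HH)_{c.o.}$. As $\widetilde{G}$ is compact, hence compactly generated, a continuous map $\widetilde{G}\to\UU(\HH)_{c.o.}$ is automatically continuous into the $k$-ification $\UU(\HH)_{c.g.}$. Composing with the quotient $\UU(\HH)_{c.g.}\to P\UU(\HH)$ and using that $S^1$ acts by scalars, $\rho$ factors through a map $a\colon G\to P\UU(\HH)$, whose continuity follows because $\widetilde{G}\to G$ is a quotient map.

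Finally I would exhibit the map $\Phi\colon\widetilde{G}\to a^*\UU(\HH)$, $\tilde{g}\mapsto(p(\tilde{g}),\rho(\tilde{g}))$, where $p\colon\widetilde{G}\to G$ is the projection. By construction $a\circ p$ equals the composite of $\rho$ with $\UU(\HH)\to P\UU(\HH)$, so $\Phi$ is a well-defined homomorphism of central $S^1$-extensions covering $\mathrm{id}_G$, and it restricts to the identity on the central $S^1$; by the short five lemma it is an isomorphism of extensions, and a continuous bijection from the compact $\widetilde{G}$ to the Hausdorff $a^*\UU(\HH)$ is a homeomorphism. Hence $\Psi(a)=[\widetilde{G}]$, proving surjectivity. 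I expect the main obstacle to lie in the construction step: guaranteeing a nonzero $S^1$-scalar (weight-one) representation, i.e.\ the non-vanishing of the associated line bundle's space of $L^2$ sections, together with the bookkeeping needed to pass from the strong topology to the delicate compactly generated topology in which $P\UU(\HH)$ is defined.
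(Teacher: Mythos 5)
Your overall strategy is the same as the paper's: realize $\widetilde{G}$ unitarily on the part of the regular representation where the central circle acts by scalar multiplication, then projectivize to get $a\colon G \to P\UU(\HH)$ and check $a^*\UU(\HH)\cong\widetilde{G}$. Your handling of the topological points (strong continuity, passage to the compact-open topology on unitaries and then to its $k$-ification, and the short-five-lemma identification of $a^*\UU(\HH)$ with $\widetilde{G}$) is correct, and in fact more detailed than the paper's own write-up of those steps.

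However, there is a genuine gap in the construction step, and it sits exactly where you predicted the main obstacle would be. You take the representation space to be the weight-one summand of $L^2(\widetilde{G})$, i.e.\ $\Gamma_{L^2}(L)$ for the line bundle $L=\widetilde{G}\times_{S^1}\complex \to G$, and you claim it is infinite-dimensional, hence identifiable with the fixed ambient $\HH$. This fails when $G$ is a \emph{finite} group --- a perfectly legitimate compact Lie group, and in fact the case the rest of the paper relies on (the isotropy groups in Chapter 3 are finite). For $G$ finite, $L$ is a line bundle over $|G|$ points and $\Gamma_{L^2}(L)\cong\complex^{|G|}$; concretely, for the trivial extension $\widetilde{G}=G\times S^1$ the weight-one part of $L^2(G)\otimes L^2(S^1)$ is $L^2(G)\otimes\complex$, which is finite-dimensional and so admits no identification with $\HH$, and no homomorphism $G\to P\UU(\HH)$ is produced. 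The paper avoids this by stabilizing: it takes the scalar part $V_c(\widetilde{G})$ of $L^2(\widetilde{G})\otimes L^2([0,1])$, which is infinite-dimensional (indeed contains every relevant irreducible representation infinitely often) regardless of the dimension of $G$. The same fix repairs your argument verbatim: replace $\Gamma_{L^2}(L)$ by $\Gamma_{L^2}(L)\otimes L^2([0,1])$ with $\widetilde{G}$ acting trivially on the second factor. Note also that the infinite multiplicity is not an idle convenience: it is reused in Proposition \ref{proposition pi zero of stable hom} to show that the \emph{stable} homomorphisms already surject onto $Ext(G,S^1)$, whereas your representation has each weight-one irreducible appearing with multiplicity equal to its dimension, hence is never stable, even when $G$ has positive dimension.
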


\begin{proof}
Let $\widetilde{G}$ be a $S^1$-central extension of $G$ and
consider the Hilbert space $L^2(\widetilde{G}) \otimes L^2([0,1])$
which is tensor product of square Hilbert space of integrable
functions on $\widetilde{G}$ and the Hilbert spaces of square
integrable functions on the unit interval. By Peter-Weyl's
theorem, $L^2(\widetilde{G}) \otimes L^2([0,1])$ contains all
irreducible representations of $\widetilde{G}$ infinitely number
of times. Now consider $V_c(\widetilde{G})$ to be the subspace of
$L^2(\widetilde{G}) \otimes L^2([0,1])$ of elements on which the
$S^1$-central of $\widetilde{G}$ acts by multiplication by
scalars. Take $\HH := V_c(\widetilde{G})$ and define the
homomorphism
$$\widetilde{a} : \widetilde{G} \to \UU(\HH), \ \ \ \ \ \ \widetilde{a}(g) v : = gv$$
induced by the left action of $\widetilde{G}$ on
$V_c(\widetilde{G})$; note that this homomorphism $\widetilde{a}$
is continuous because we have endowed $\UU(\HH)$ with the
compactly generated compact open topology. Taking the
projectivization
$$a : G \to P\UU(\HH)$$ of the map $\widetilde{a}$ we get the
desired homomorphism such that $$a^* \UU(\HH) \cong
\widetilde{G}.$$
\end{proof}

Clearly conjugate homomorphisms define isomorphic $S^1$-central
extensions, therefore we get an induced map
$$
 Hom(G, P \UU(\HH)) / P\UU(\HH)  \to  Ext(G, S^1).
$$

Now, take any two continuous homomorphisms $a,b : G \to P
\UU(\HH)$ such that their induced $S^1$-central extensions are
isomorphic, i.e. $\Psi(a) \cong \Psi(b)$. The maps $a$ and $b$ are
$P \UU(\HH)$-conjugate whenever the induced homomorphisms
$\widetilde{a} , \widetilde{b} : \widetilde{G} \to \UU(\HH)$ are
conjugate. The two maps $\widetilde{a}$ and $\widetilde{b}$ are
conjugate whenever the induced $\widetilde{G}$ representations on
$\HH$ can be written as the direct sum of the same number of
irreducible representations for each irreducible representation in
$V_c(\widetilde{G})$. Therefore we would like to focus our
attention to the $\widetilde{G}$ actions on $\HH$ such that all
irreducible representations in $V_c(\widetilde{G})$ appear
infinitely number of times.

\begin{definition} \label{definition stable}
A continuous homomorphism $a : G \to P\UU(\HH)$ is called { \bf
stable} if the unitary representation $\HH$ induced by the
homomorphism $\widetilde{a}: \widetilde{G}= a^*\UU(\HH) \to
\UU(\HH)$ contains each of the irreducible representations of
$\widetilde{G}$ that appear in $V_c(\widetilde{G})$ infinitely
number of times. We denote
$$Hom_{st}(G, P\UU(\HH)) \subset Hom(G,P\UU(\HH))$$
the subspace of continuous stable homomorphisms.
\end{definition}

\begin{proposition} \label{proposition pi zero of stable hom}
The map $\Psi$ induces a bijection of sets
$$Hom_{st}(G, P\UU(\HH))/P\UU(\HH) \stackrel{\cong}{\To} Ext(G,S^1).$$
between the set of isomorphism classes of continuous stable
homomorphisms and the set isomorphism classes of $S^1$-central extensions of $G$.
\end{proposition}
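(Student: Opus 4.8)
The plan is to prove the two halves of the bijection separately, reducing everything to a count of multiplicities of irreducible $\widetilde{G}$-representations. Well-definedness of the induced map on conjugacy classes is already recorded in the paragraph preceding the statement: conjugate homomorphisms pull back to isomorphic central extensions, so $\Psi$ descends to a map on $Hom_{st}(G,P\UU(\HH))/P\UU(\HH)$. For surjectivity I would invoke Lemma \ref{lemma Psi surjective} together with the observation that its proof already produces a \emph{stable} homomorphism: given a central extension $\widetilde{G}$, the construction there takes $\HH:=V_c(\widetilde{G})$, and by Peter--Weyl the space $L^2(\widetilde{G})\otimes L^2([0,1])$, hence its $\escalar$ summand $V_c(\widetilde{G})$, contains every irreducible representation of $\widetilde{G}$ on which $S^1$ acts by multiplication infinitely many times. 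Thus the homomorphism built there satisfies Definition \ref{definition stable} and hits $\widetilde{G}\in Ext(G,S^1)$, giving surjectivity from stable classes.

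For injectivity I would take stable $a,b$ with $\Psi(a)\cong\Psi(b)$ and fix an isomorphism of extensions $\phi:\widetilde{G}_a\xrightarrow{\cong}\widetilde{G}_b$ covering $\mathrm{id}_G$ and restricting to the identity on the central $S^1$. Pulling $\widetilde{b}$ back along $\phi$ makes $\HH_a$ and $\HH_b$ into unitary representations of the single group $\widetilde{G}_a$ on which the central $S^1$ acts by scalars; hence both decompose as Hilbert sums of irreducibles drawn from $V_c(\widetilde{G}_a)$. Since $G$, and therefore $\widetilde{G}_a$, is a compact Lie group, these irreducibles are finite-dimensional and countably many, and separability of $\HH$ forces ``infinitely many times'' to mean exactly $\aleph_0$. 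Stability of both $a$ and $b$ then says each such irreducible occurs with multiplicity $\aleph_0$ in each of $\HH_a,\HH_b$, so the two representations have identical multiplicity functions and are unitarily equivalent. Any intertwining unitary $U$ satisfies $U\,\widetilde{a}(g)\,U^{-1}=\widetilde{b}(\phi(g))$, and its class $[U]\in P\UU(\HH)$ conjugates $a$ into $b$; this is exactly the implication, noted before the statement, that conjugacy of $\widetilde{a},\widetilde{b}$ yields conjugacy of $a,b$.

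The step I expect to need the most care is the final one: turning the abstract unitary equivalence into a genuine conjugation \emph{inside} $P\UU(\HH)$ with its compactly generated topology, and matching it to the chosen identification $\phi$ of extensions. Concretely I would check that the intertwiner $U$, being a bona fide unitary operator, lies in $\UU(\HH)_{c.g.}$ so that $[U]$ makes sense and acts by conjugation on $Hom(G,P\UU(\HH))$, and that the $\escalar$ condition lets $U$ be taken block-diagonal along the aligned isotypic decompositions so that passing to the quotient by $S^1$ reproduces $a\mapsto b$. A secondary subtlety to spell out is that $\phi$ carries $V_c(\widetilde{G}_a)$ bijectively onto $V_c(\widetilde{G}_b)$, so that the phrase ``appears in $V_c(\widetilde{G})$'' in Definition \ref{definition stable} is transported faithfully; this holds precisely because $\phi$ is the identity on the central $S^1$, and it is what guarantees the multiplicity functions of $\widetilde{a}$ and $\widetilde{b}\circ\phi$ are compared on the same index set.
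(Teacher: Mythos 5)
Your proposal is correct and follows essentially the same route as the paper: surjectivity by observing that the construction in Lemma \ref{lemma Psi surjective} already yields a stable homomorphism, and injectivity by using stability to produce a unitary intertwiner between the two $\widetilde{G}$-representations whose projective class conjugates $a$ into $b$. The only cosmetic difference is that the paper exhibits the intertwiner as $f_b^{-1}\circ f_a$ by comparing both representations to the reference model $V_c(\widetilde{G})$, whereas you compare multiplicity functions of $\HH_a$ and $\HH_b$ directly (and spell out the $\aleph_0$ count and the normalization of the extension isomorphism, details the paper leaves implicit).
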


\begin{proof}

Let $\widetilde{G}$ be a $S^1$-central extension of $G$. In Lemma
\ref{lemma Psi surjective} we showed that if we take
$\HH:=V_c(\widetilde{G})$ the subspace of $L^2(\widetilde{G})
\otimes L^2([0,1])$ where $S^1$ acts by multiplication, then the
projectivization $a: G \to P\UU(\HH)$ of the induced homomorphism
$\widetilde{a}: \widetilde{G} \to \UU(\HH)$ is a continuous stable
homomorphism and it produces the desired $S^1$-central extension
$a^*\UU(\HH) \cong \widetilde{G}$; this shows surjectivity.

Let us suppose now that we have two continuous stable
homomorphisms $a,b$ such that $a^*\UU(\HH) \cong \widetilde{G}
\cong b^*\UU(\HH)$. The Hilbert space $\HH$ becomes a
$\widetilde{G}$ representation with respect to the map
$\widetilde{a}: \widetilde{G} \to \UU(\HH)$ and therefore there is
a (non-canonical) $\widetilde{G}$-equivariant isomorphism $f_a
:\HH \stackrel{\cong}{\to} V_c(\widetilde{G})$ that can be taken
to be unitary. We get the same result for the map $b$ and we get
another $\widetilde{G}$-equivariant isomorphism $f_b :\HH
\stackrel{\cong}{\to} V_c(\widetilde{G})$.

The $\widetilde{G}$-equivariant isomorphism $f_b^{-1} \circ f_a:
\HH \to \HH$ makes the following diagram commute
$$\xymatrix{
\HH \ar[rr]^{f_b^{-1} \circ f_a} \ar[d]_{\widetilde{a}(g)} & & \HH \ar[d]^{\widetilde{b}(g)}\\
\HH  \ar[rr]^{f_b^{-1} \circ f_a} & &\HH }$$
 for all $g \in \widetilde{G}$. Therefore the homomorphisms
$\widetilde{a}$ and $\widetilde{b}$ are conjugate; the injectivity
follows.

\end{proof}
\subsection{Groupoid of stable homomorphisms}
 \label{subsection homotopy quotient}

Let us now consider the action groupoid $$[Hom_{st}(G,
P\UU(\HH))/P\UU(\HH)]$$ associated to the action of the group
$P\UU(\HH)$ on the continuous stable homomorphisms by
conjugation; this groupoid can also by understood as the groupoid
whose objects are functors from the category defined by $G$ to the
category defined by $P\UU(\HH)$, and whose morphisms are natural
transformations.

We would like to understand the homotopy type of the classifying
space $$B[Hom_{st}(G, P\UU(\HH))/P\UU(\HH)],$$ which by our conventions of \S \ref{Notation}
and the arguments of \cite[Page 107]{Segal}, it can be
modelled by the homotopy quotient $$ EP\UU(\HH) \times_{P\UU(\HH)}
Hom_{st}(G, P\UU(\HH)).$$ 

By Proposition \ref{proposition pi zero of stable hom} we know
that its connected components are parametrized by the
$S^1$-central extensions of $G$,
$$\pi_0(EP\UU(\HH) \times_{P\UU(\HH)}
Hom_{st}(G, P\UU(\HH))) \cong Ext(G,S^1).$$ If we choose the
connected component of the classifying space determined by a
stable map $a: G \to P\UU(\HH)$, its higher homotopy groups are
determined by the homotopy groups of
$$ EP\UU(\HH) \times_{P\UU(\HH)_a} \{a\}  \simeq B
(P\UU(\HH)_a)$$ where
$$P\UU(\HH)_a =\{ b \in P\UU(\HH) | \forall g \in G, \ b^{-1}\, a(g)\, b=a(g) \}$$
is the subgroup of $P\UU(\HH)$ that stabilizes $a$.

Note that if we call $\widetilde{G}_a$ the $S^1$-central extension
$a^* \UU(\HH)$ defined by $a$, then the induced homomorphism
$\widetilde{a} : \widetilde{G}_a \to \UU(\HH)$ defines an action
on the Hilbert space $\HH$ making it isomorphic to the Hilbert
space $V_c(\widetilde{G}_a)$ defined in Lemma \ref{lemma Psi
surjective}. We can now take the action of $\widetilde{G}_a$ on
$\UU(\HH)$ by conjugation through the map $\widetilde{a}$; this
induces an action of the group $G$ on $P\UU(\HH)$ by conjugation
through the map $a$. Let us denote the stabilizer of the action $G$ on $P\UU(\HH)$
by conjugation through the map $a$ by $G_a$.

If we call the fixed points of the action of $G_a$ on $P\UU(\HH)$
by
$$P\UU(\HH)^{G_a}:= \{ b \in P\UU(\HH) |\forall g \in G, \ a(g)^{-1} \, b  \,
a(g) = b \}$$ then we have that
$$P\UU(\HH)^{G_a} =P\UU(\HH)_a.$$
This means that the fixed point set of the action of $G_a$ is the
same as the stabilizer of the homomorphism $a$.

Now take a projective operator $F \in P\UU(\HH)^{G_a}$ that
commutes with the $G_a$ action. Take $\widetilde{F} \in \UU(\HH)$
a lift of $F$ and note that $\widetilde{F}$ commutes with the
$\widetilde{G}_a$ action up to some phase, i.e. for all
$\widetilde{g} \in \widetilde{G}_a$ we have that
$$\widetilde{a}( \widetilde{g})^{-1} \, \widetilde{F} \, \widetilde{a}(
\widetilde{g}) =  \widetilde{F} \, \sigma_F( \widetilde{g})$$ for
some map $\sigma_F: \widetilde{G}_a \to S^1$. Because the action
of $\widetilde{G}_a$ is by conjugation, we have that the map
$\sigma_F$ is a homomorphism of groups, that it does not depend on
the choice of lift for $F$, and moreover that $\sigma_F$ is trivial
when restricted to $S^1= kernel(\pi: \widetilde{G}_a \to G_a).$ We
have then,
\begin{lemma}
For any $F \in P\UU(\HH)^{G_a}$ there exists a homomorphism
$\sigma_F \in Hom(G_a, S^1)$ such that for all lifts
$\widetilde{F} \in \UU(\HH)$ of $F$ and all $ \widetilde{g} \in
\widetilde{G}_a$, we have that
$$\widetilde{a}( \widetilde{g})^{-1} \, \widetilde{F} \, \widetilde{a}(
\widetilde{g}) = \widetilde{F}\,   \sigma_F(g)$$ where  $g$ is the
image of $\widetilde{g}$ on $G_a$ under the natural map
$\widetilde{G}_a \to G_a$.
\end{lemma}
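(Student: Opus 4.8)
The plan is to take the displayed formula itself as the \emph{definition} of $\sigma_F$ on $\widetilde{G}_a$, and then to verify in turn that it is well posed, independent of the chosen lift $\widetilde{F}$, multiplicative, and trivial on the central circle, so that it descends to a character of $G_a$. First I would note that because $F \in P\UU(\HH)^{G_a} = P\UU(\HH)_a$ commutes in $P\UU(\HH)$ with $a(g)$ for every $g$, the operators $\widetilde{a}(\widetilde{g})^{-1}\widetilde{F}\widetilde{a}(\widetilde{g})$ and $\widetilde{F}$ project to the same element of $P\UU(\HH)$. Since the kernel of the quotient $\UU(\HH)_{c.g.} \to P\UU(\HH)$ is the central $S^1$ of scalar operators, these two lifts of $F$ differ by a unique scalar, which I define to be $\sigma_F(\widetilde{g}) \in S^1$. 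Continuity of $\sigma_F$ is then immediate from the continuity of the map $\widetilde{g}\mapsto \widetilde{a}(\widetilde{g})^{-1}\widetilde{F}\widetilde{a}(\widetilde{g})$ in $\UU(\HH)_{c.g.}$.

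Next I would check independence of the lift of $F$: replacing $\widetilde{F}$ by $\lambda\widetilde{F}$ for $\lambda \in S^1$ multiplies both sides of the defining identity by the central scalar $\lambda$, which cancels, so the same $\sigma_F$ is produced. For multiplicativity I would expand $\widetilde{a}(\widetilde{g}\widetilde{h}) = \widetilde{a}(\widetilde{g})\widetilde{a}(\widetilde{h})$ (here it is essential that $\widetilde{a}$ is an honest homomorphism into $\UU(\HH)$, not merely a projective one), conjugate $\widetilde{F}$ by this product, apply the defining relation first for $\widetilde{g}$ and then for $\widetilde{h}$, and move the scalar $\sigma_F(\widetilde{g})$ freely past the remaining operators because it is central. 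Comparing the resulting expression $\widetilde{F}\,\sigma_F(\widetilde{g})\sigma_F(\widetilde{h})$ with $\widetilde{F}\,\sigma_F(\widetilde{g}\widetilde{h})$ and cancelling the invertible $\widetilde{F}$ yields the homomorphism property.

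Finally I would show that $\sigma_F$ is trivial on the kernel $S^1 = \ker(\widetilde{G}_a \to G_a)$: by construction of the extension $\widetilde{G}_a = a^*\UU(\HH)$, the restriction of $\widetilde{a}$ to this central $S^1$ is the inclusion as scalar operators, so for $z \in S^1$ the conjugation $\widetilde{a}(z)^{-1}\widetilde{F}\widetilde{a}(z)$ equals $\widetilde{F}$ and hence $\sigma_F(z) = 1$. A homomorphism trivial on the kernel of $\widetilde{G}_a \to G_a$ factors through the quotient, giving the asserted $\sigma_F \in Hom(G_a, S^1)$ together with the stated relation written with $\sigma_F(g)$, where $g$ is the image of $\widetilde{g}$. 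I do not anticipate a genuine obstacle here: every step is forced by the centrality of the scalars in $\UU(\HH)$, and the only point requiring care is keeping straight the passage between the projective identity $a(g)^{-1}Fa(g)=F$, which merely guarantees that the scalar exists, and the honest operator identity in $\UU(\HH)$ that it lifts to.
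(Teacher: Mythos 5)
Your proposal is correct and follows essentially the same route as the paper, which establishes the lemma in the discussion immediately preceding it: the phase $\sigma_F(\widetilde{g})$ exists because the two lifts of $F$ differ by a central scalar, and the homomorphism property, lift-independence, and triviality on $S^1=\ker(\widetilde{G}_a \to G_a)$ all follow from the centrality of scalars in $\UU(\HH)$. You merely spell out the computations (including continuity) that the paper asserts without detail, so there is nothing to correct.
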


If we take two operators $F_1, F_2 \in P\UU(\HH)^{G_a}$ with their
respective lifts $\widetilde{F}_1, \widetilde{F}_2$ and the
induced homomorphisms $\sigma_{F_1}, \sigma_{F_2} \in
Hom(G_a,S^1)$, we have that the composition $\widetilde{F}_1
\widetilde{F}_2$ is a lift for the composition ${F}_1 {F}_2$.
Therefore we have that the induced homomorphism $\sigma_{{F}_1
{F}_2}$ for the composition ${F}_1 {F}_2$ is equal to the product
of the homomorphisms $\sigma_{F_1}$ and $ \sigma_{F_2}$, i.e.
 $$\sigma_{{F}_1\, {F}_2} = \sigma_{F_1} \sigma_{F_2}.$$
Note that the product of homomorphisms endows the set $Hom(G_a,
S^1)$ with a natural group structure. We have then

\begin{lemma} \label{Lemma surjective sigma}
The map $$\sigma: P\UU(\HH)^{G_a} \to Hom(G_a,S^1), \ \ \ \ \ F
\mapsto \sigma_F$$ is a surjective homomorphism of groups.
\end{lemma}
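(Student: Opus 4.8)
The well-definedness of $\sigma_F$ and the identity $\sigma_{F_1 F_2} = \sigma_{F_1}\sigma_{F_2}$ have already been established, so $\sigma$ is a group homomorphism and the only remaining point is surjectivity. The plan is to realize every character $\chi \in Hom(G_a, S^1)$ as some $\sigma_F$ by a representation-theoretic construction, exploiting the stability of $a$.

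First I would reformulate the target condition. Fix $\chi \in Hom(G_a, S^1)$ and pull it back along the natural map $\widetilde{G}_a \to G_a$ to a one dimensional character of $\widetilde{G}_a$, still denoted $\chi$, which is trivial on the central $S^1 = \ker(\widetilde{G}_a \to G_a)$. Unwinding the defining relation $\widetilde{a}(\widetilde g)^{-1} \widetilde F \, \widetilde a(\widetilde g) = \widetilde F\, \sigma_F(g)$ and using that $\sigma_F(g) \in S^1$ is central, one checks that requiring $\sigma_F = \chi$ is exactly requiring that a unitary lift $\widetilde F \in \UU(\HH)$ satisfy
$$\widetilde F \circ \widetilde a(\widetilde g) = \chi(g)\, \widetilde a(\widetilde g) \circ \widetilde F \qquad \text{for all } \widetilde g \in \widetilde{G}_a.$$
In other words, $\widetilde F$ must be a $\widetilde{G}_a$-equivariant unitary isomorphism from the representation $(\HH, \widetilde a)$ to the twisted representation $(\HH, \chi \otimes \widetilde a)$, where $(\chi \otimes \widetilde a)(\widetilde g) = \chi(g)\, \widetilde a(\widetilde g)$. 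Hence surjectivity of $\sigma$ is equivalent to the statement that $(\HH, \widetilde a)$ and $(\HH, \chi \otimes \widetilde a)$ are unitarily isomorphic for every $\chi$.

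To produce such an isomorphism I would use the decomposition of $\HH$. By the stability of $a$ there is a unitary $\widetilde{G}_a$-isomorphism $\HH \cong V_c(\widetilde{G}_a)$, and the latter splits as a Hilbert direct sum $\bigoplus_{\rho} \rho^{\oplus \infty}$ over the irreducible representations $\rho$ of $\widetilde{G}_a$ on which $S^1$ acts by the standard scalar, each occurring with infinite multiplicity. Since $\chi$ is trivial on the central $S^1$, tensoring by $\chi$ sends such an irreducible $\rho$ to another irreducible $\chi \otimes \rho$ with the same central character, so $\rho \mapsto \chi \otimes \rho$ is a bijection of this index set onto itself, with inverse $\rho \mapsto \chi^{-1}\otimes\rho$. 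Consequently the isotypic decomposition of $(\HH, \chi\otimes \widetilde a)$ again has every admissible irreducible appearing with infinite multiplicity and no others, so it is unitarily isomorphic to $(\HH, \widetilde a)$. Choosing such a unitary $\widetilde F$ and setting $F := [\widetilde F] \in P\UU(\HH)$, reduction of the intertwining relation modulo $S^1$ shows that $F$ commutes with $a(g)$ for all $g$, so $F \in P\UU(\HH)_a = P\UU(\HH)^{G_a}$, and by construction $\sigma_F = \chi$.

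The main obstacle is the step asserting that the twist $\chi \otimes \rho$ stays within the admissible irreducibles with unchanged, infinite multiplicity: this is precisely where stability is indispensable, since without infinite multiplicities the permutation $\rho \mapsto \chi \otimes \rho$ need not yield a unitarily isomorphic representation, and the corresponding $F$ could fail to exist. Everything else is bookkeeping with the central character and the infinite multiplicity, which together guarantee the required unitary equivalence.
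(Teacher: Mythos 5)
Your proof is correct and follows essentially the same route as the paper: the paper also realizes a given character $\rho$ by exploiting that tensoring with the pulled-back one-dimensional representation $\complex(\rho)$ permutes the admissible irreducibles of $\widetilde{G}_a$, and uses the stable identification $f_a:\HH \cong V_c(\widetilde{G}_a)$ to build the intertwining unitary $\widetilde{F}= f_a^{-1}\circ\eta\circ\gamma\circ f_a$ whose projectivization gives $\sigma_F=\rho$. Your presentation merely packages this as a unitary equivalence $(\HH,\widetilde{a})\cong(\HH,\chi\otimes\widetilde{a})$ instead of writing the composite explicitly, which is the same argument.
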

\begin{proof}
We have already shown that $\sigma$ is a homomorphism of groups,
let us show that is surjective. Take $\rho \in Hom(G_a,S^1)$ and
consider $\complex(\rho)$ the linear $\widetilde{G}_a$
representation defined by multiplication of scalars, i.e. for
$\lambda \in \complex(\rho)$,  $\widetilde{g} \cdot \lambda =
\rho(g) \lambda$.

Now, if $V$ is an irreducible representation of $\widetilde{G}_a$
where the kernel of $\widetilde{G}_a \to G_a$ acts by
multiplication of scalars, the vector space $V \otimes
\complex(\rho)$ becomes a $\widetilde{G}_a$ representation by the
diagonal action, and moreover it is irreducible and the kernel
$\widetilde{G}_a \to G_a$ also acts by multiplication of scalars.
On the other hand, any irreducible representation of
$\widetilde{G}_a$, where the kernel of $\widetilde{G}_a \to G_a$
acts by multiplication of scalars, is isomorphic to a
representation of the form $W \otimes \complex(\rho)$ for a
suitable irreducible representation $W$ of $\widetilde{G}_a$ where
the kernel of $\widetilde{G}_a \to G_a$ acts by multiplication of
scalars. Therefore we have that by tensoring the space
$V_c(\widetilde{G}_a)$ with $\complex(\rho)$ we get a
$\widetilde{G}_a$-equivariant isomorphism $$ \eta:
V_c(\widetilde{G}_a) \otimes \complex(\rho) \to
V_c(\widetilde{G}_a)$$ where $V_c(\widetilde{G}_a)$ is the Hilbert
space defined in Lemma \ref{lemma Psi surjective}.

Let us consider the map
$$\gamma : V_c(\widetilde{G}_a) \to V_c(\widetilde{G}_a) \otimes
\complex(\rho), \ \ \ \ \ \gamma(v)= v \otimes 1$$ and note that
$$\widetilde{g} \gamma ( \widetilde{g}^{-1} v) = \widetilde{g}
\left( \gamma (\widetilde{g}^{-1} v) \otimes 1 \right) = v \otimes
\rho(g) = \rho(g) \gamma(v).$$

Choosing an explicit $\widetilde{G}_a$-equivariant isomorphism
$f_a : \HH \to V_c(\widetilde{G}_a)$ such as the one defined in
Proposition \ref{proposition pi zero of stable hom}, we can define
the operator in $\UU(\HH)$
$$\widetilde{F} : = f_a^{-1} \circ \eta \circ \gamma \circ f_a$$
which by definition has the property that for all
$\widetilde{g} \in \widetilde{G}_a$ \begin{equation}
\label{equation equivariant lift up to a phase} \widetilde{a}(
\widetilde{g})^{-1} \, \widetilde{F} \, \widetilde{a}(
\widetilde{g}) = \widetilde{F} \,  \sigma_F(g).\end{equation}

If $F$ is the projectivization of $\widetilde{F}$, then we have
that equation \eqref{equation equivariant lift up to a phase}
implies that $F \in P \UU(\HH)^{G_a}$, and moreover that $\sigma_F
= \rho$. This shows that the homomorphism $\sigma$ is surjective.
\end{proof}

With the help of the surjective map $\sigma$ let us try to
understand in a more explicit way the group $P\UU(\HH)^{G_a}$.
Take $\rho \in Hom(G_a,S^1)$ and define the space
$$\UU(\HH)^{(\widetilde{G}_a,\rho)} := \{ \widetilde{F} \in
\UU(\HH) | \forall \widetilde{g} \in \widetilde{G}_a, \
\widetilde{a}(\widetilde{g})^{-1} \, \widetilde{F} \,
\widetilde{a}( \widetilde{g}) = \widetilde{F} \,  \sigma_F(g)\}.$$
From the proof of Lemma \ref{Lemma surjective sigma} we get that
the inverse image of $\rho$ under $\sigma$ is isomorphic to the
projectivization of the space $\UU(\HH)^{(\widetilde{G}_a,\rho)}$,
i.e.
$$ \sigma^{-1}(\rho) = P \left( \UU(\HH)^{(\widetilde{G}_a,\rho)}\right),$$
and moreover we have that any  operator $\widetilde{F} \in
\UU(\HH)^{(\widetilde{G}_a,\rho)}$ induces a homeomorphism of
spaces
$$\beta_{\widetilde{F}} : \UU(\HH)^{\widetilde{G}_a}
\stackrel{\cong}{\To} \UU(\HH)^{(\widetilde{G}_a,\rho)}, \ \ \ W
\mapsto F \circ W $$ by composing the operators.

\begin{theorem} \label{theorem homotopy groups of PU(H)G}
Let $a \in Hom_{st}(G, P\UU(\HH))$ be a continuous stable
homomorphism, and denote with $G_a$ the action on $P\UU(\HH)$ by
conjugation defined by the map $a$. Then there is a natural
isomorphism of groups
$$\pi_0\left(P\UU(\HH)^{G_a} \right) \cong Hom(G;S^1),$$
and each connected component of $P\UU(\HH)^{G_a}$ has the homotopy
type of a $K(\integer,2)$.
\end{theorem}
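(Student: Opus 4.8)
The plan is to analyze the surjective homomorphism $\sigma \colon P\UU(\HH)^{G_a} \to Hom(G_a, S^1)$ produced in Lemma \ref{Lemma surjective sigma}: I would show that $\sigma$ is locally constant, that its fibers are precisely the connected components of $P\UU(\HH)^{G_a}$, and that each fiber has the homotopy type of $K(\integer,2)$. The isomorphism on $\pi_0$ then follows from surjectivity of $\sigma$ together with the connectedness of its fibers, while the statement about components is exactly the determination of the fibers.

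First I would identify the kernel $\sigma^{-1}(1) = P\bigl(\UU(\HH)^{\widetilde{G}_a}\bigr)$, the projectivization of the group of $\widetilde{G}_a$-equivariant unitary operators. Decomposing $\HH \cong V_c(\widetilde{G}_a)$ into isotypic components $\HH \cong \bigoplus_i V_i \otimes M_i$ over the irreducibles $V_i$ of $\widetilde{G}_a$ on which the central $S^1$ acts by scalars, Schur's lemma yields a topological group isomorphism $\UU(\HH)^{\widetilde{G}_a} \cong \prod_i \UU(M_i)$. Here the stability hypothesis on $a$ is essential: it forces $\dim M_i = \infty$ for every $i$, so each factor is an infinite-dimensional unitary group, contractible by Proposition \ref{Proposition contractibility of U(H)}; contracting factorwise shows $\UU(\HH)^{\widetilde{G}_a}$ is contractible.

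Next, the scalars sit inside $\UU(\HH)^{\widetilde{G}_a}$ as the central diagonal $\lambda \mapsto (\lambda \,\mathrm{Id}_{M_i})_i$, giving a principal bundle $S^1 \to \UU(\HH)^{\widetilde{G}_a} \to \sigma^{-1}(1)$. As the total space is contractible, the long exact sequence of homotopy groups gives $\pi_k(\sigma^{-1}(1)) \cong \pi_{k-1}(S^1)$ for $k \ge 1$, so $\sigma^{-1}(1)$ is connected, simply connected, has $\pi_2 \cong \integer$, and vanishing higher homotopy; that is, $\sigma^{-1}(1) \simeq K(\integer,2)$, exactly as for $P\UU(\HH)$ itself. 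For an arbitrary $\rho \in Hom(G_a, S^1)$ the homeomorphism $\beta_{\widetilde{F}}$ constructed before the theorem projectivizes to a homeomorphism $\sigma^{-1}(1) \cong \sigma^{-1}(\rho)$ (equivalently, $\sigma^{-1}(\rho)$ is a coset of $\ker\sigma$, hence homeomorphic to it by left translation in the topological group $P\UU(\HH)^{G_a}$). Thus every fiber is connected and of the homotopy type $K(\integer,2)$.

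Finally I would assemble the conclusions. Because $G_a$ is a compact Lie group, $Hom(G_a, S^1) = \widehat{G_a^{\mathrm{ab}}}$ is discrete in the compact--open topology, while $F \mapsto \sigma_F$ is continuous into $\Map(G_a,S^1)$; hence $\sigma$ is locally constant. Combined with the connectedness of each fiber, this shows that the connected components of $P\UU(\HH)^{G_a}$ are exactly the fibers $\sigma^{-1}(\rho)$, each a $K(\integer,2)$. The induced map $\pi_0(P\UU(\HH)^{G_a}) \to Hom(G_a,S^1)$ is then bijective --- surjective by Lemma \ref{Lemma surjective sigma} and injective since the fibers are connected --- and it is a group isomorphism because $\sigma$ is a homomorphism, which gives the isomorphism of the statement. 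The main obstacle is the contractibility step: one must check that the Schur decomposition is a homeomorphism for the compactly generated topology and that the factorwise contraction of $\prod_i \UU(M_i)$ is continuous there, which is precisely where stability and the care taken with the topology in Proposition \ref{Proposition contractibility of U(H)} enter.
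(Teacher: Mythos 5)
Your proposal is correct and takes essentially the same route as the paper's proof: both decompose $P\UU(\HH)^{G_a}$ into the fibers of $\sigma$, identify each fiber with $P\bigl(\UU(\HH)^{\widetilde{G}_a}\bigr)$ via the translation homeomorphisms $\beta_{\widetilde{F}}$, prove contractibility of $\UU(\HH)^{\widetilde{G}_a}$ through the isotypical (Schur) decomposition into a product of unitary groups of infinite-dimensional multiplicity spaces (where stability enters), and conclude that each component is a $K(\integer,2)$ with $\pi_0 \cong Hom(G_a,S^1)$. Your explicit justification that the fibers are exactly the connected components (local constancy of $\sigma$ from the discreteness of $Hom(G_a,S^1)$) and your use of the $S^1$-bundle long exact sequence merely spell out steps the paper leaves implicit.
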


\begin{proof}
We have seen from the arguments above that
$$P\UU(\HH)^{G_a} \cong \bigsqcup_{\rho \in Hom(G,S^1)} P \left(
\UU(\HH)^{(\widetilde{G}_a,\rho)}\right)$$ and that as spaces we
have homeomorphisms
$$P \left( \UU(\HH)^{\widetilde{G}_a}\right) \cong P \left(
\UU(\HH)^{(\widetilde{G}_a,\rho)}\right).$$

The  ${\widetilde{G}_a}$ action on $\HH$ splits the Hilbert space $\HH$ into  isotypical $\widetilde{G}_a$-representations
 $$\HH = \oplus_\alpha \HH_\alpha$$
 with $\HH_\alpha \cong M_\alpha \otimes \HH_{\alpha,0}$ where $M_\alpha$ is a simple $\widetilde{G}_a$-representation
and $\HH_{\alpha,0}$ is a separable Hilbert space. Therefore we have 
$$\UU(\HH)^{\widetilde{G}_a} \cong  \prod_\alpha \UU(\HH_\alpha)^{\widetilde{G}_a}  \cong \prod_\alpha \UU(\HH_{\alpha,0})$$
where the stability condition of the homomorphism $a$ implies that each of the Hilbert space spaces $\HH_{\alpha,0}$ is infinite dimensional and hence the spaces $ \UU(\HH_{\alpha,0})$ are all contractible by Proposition \ref{Proposition contractibility of U(H)}. 
Then we can conclude that $ \UU(\HH)^{\widetilde{G}_a}$ is contractible and therefore we have that
$$\pi_i\left(P\left (\UU(\HH)^{\widetilde{G}_a}\right) \right) = \left\{
\begin{array}{cl}
\integer & i =0,2\\
0 & \mbox{otherwise.}
\end{array} \right.$$
Therefore the spaces $P \left
(\UU(\HH)^{(\widetilde{G}_a,\rho)}\right)$
 have the homotopy type of a $K(\integer,2)$,
and moreover, the connected components of $P\UU(\HH)^{G_a}$ are
parameterized by the elements in $Hom(G_a,S^1)$; hence we have an
isomorphism of groups
$$\pi_0\left(P\UU(\HH)^{G_a} \right) \cong Hom(G;S^1).$$

\end{proof}

We can now bundle up all previous results in the following theorem
\begin{theorem} \label{theorem homotopy groups BPU(H)}
Let $G$ be a compact Lie group and
$Hom_{st}(G,P\UU(\HH))/P\UU(\HH)$ the category whose space of
objects consist  of continuous stable homomorphisms from $G$ to
the projective unitary group endowed with the compact open
topology, and whose morphisms are natural transformations. Then
the connected components of the homotopy quotient are
parameterized by the $S^1$-central extensions of $G$,
$$\pi_0\left(EP\UU(\HH) \times_{P\UU(\HH)} Hom_{st}(G,P\UU(\HH)) \right) = Ext_c(G,S^1),$$
and the higher homotopy groups of any connected component are
$$\pi_i\left(EP\UU(\HH) \times_{P\UU(\HH)} Hom_{st}(G,P\UU(\HH))\right)=\left\{
\begin{array}{cl}
Hom(G,S^1) & i=1\\
\integer & i=3\\
0 & \mbox{otherwise}.
\end{array}
\right.$$
\end{theorem}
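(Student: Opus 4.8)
The plan is to reduce the whole computation to the classifying space of the stabilizer $P\UU(\HH)_a$, whose homotopy type was already pinned down in Theorem \ref{theorem homotopy groups of PU(H)G}, and then apply the standard degree shift relating $\pi_i(B\Gamma)$ to $\pi_{i-1}(\Gamma)$. First I would record the $\pi_0$ statement. Because $P\UU(\HH)$ is connected (its homotopy type is that of $K(\integer,2)$), the Borel fibration
$$
Hom_{st}(G,P\UU(\HH))\To EP\UU(\HH)\times_{P\UU(\HH)}Hom_{st}(G,P\UU(\HH))\To BP\UU(\HH)
$$
has simply connected base $BP\UU(\HH)\simeq K(\integer,3)$, so the long exact sequence of the fibration identifies $\pi_0$ of the homotopy quotient with $\pi_0$ of the fiber $Hom_{st}(G,P\UU(\HH))$. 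Granting that the $P\UU(\HH)$-orbits are exactly the path components, Proposition \ref{proposition pi zero of stable hom} identifies this set with $Ext_c(G,S^1)$, giving the stated $\pi_0$ and the indexing of connected components by the $S^1$-central extensions of $G$.

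Next I would isolate a single connected component. Fixing a stable homomorphism $a$, its component is the homotopy orbit of the single orbit $P\UU(\HH)\cdot a\cong P\UU(\HH)/P\UU(\HH)_a$. Using that $EP\UU(\HH)$ is contractible with free $P\UU(\HH)_a$-action, I would rewrite
$$
EP\UU(\HH)\times_{P\UU(\HH)}\bigl(P\UU(\HH)/P\UU(\HH)_a\bigr)\;\cong\;EP\UU(\HH)/P\UU(\HH)_a\;\simeq\;B\bigl(P\UU(\HH)_a\bigr).
$$
By the identification $P\UU(\HH)_a=P\UU(\HH)^{G_a}$ established just before Theorem \ref{theorem homotopy groups of PU(H)G}, the component is homotopy equivalent to $B\bigl(P\UU(\HH)^{G_a}\bigr)$.

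The final step is formal: for a topological group $\Gamma$ one has $\pi_i(B\Gamma)\cong\pi_{i-1}(\Gamma)$ for $i\geq 1$, with the right-hand groups based at the identity and hence detecting only the identity component $\Gamma_0$. Applying this to $\Gamma=P\UU(\HH)^{G_a}$ and feeding in Theorem \ref{theorem homotopy groups of PU(H)G}, which gives $\pi_0(\Gamma)\cong Hom(G,S^1)$ and $\Gamma_0\simeq K(\integer,2)$ so that $\pi_1(\Gamma_0)=0$, $\pi_2(\Gamma_0)=\integer$ and $\pi_j(\Gamma_0)=0$ for $j\geq 3$, I would read off $\pi_1(B\Gamma)=Hom(G,S^1)$, $\pi_2(B\Gamma)=0$, $\pi_3(B\Gamma)=\integer$ and $\pi_i(B\Gamma)=0$ for $i\geq 4$, which are precisely the asserted values.

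The one point that genuinely needs care, and which I expect to be the main obstacle, is the topological input hidden in the first two steps: that the path component of $Hom_{st}(G,P\UU(\HH))$ through $a$ is exactly the orbit $P\UU(\HH)\cdot a$, equivalently that the orbits are open (and hence open and closed). The orbits are automatically connected since $P\UU(\HH)$ is, so the real content is their openness. Granting this, the homotopy orbit of a single orbit is $B$ of its stabilizer and everything else is bookkeeping; should openness be awkward to verify directly, one can instead work entirely with the long exact sequence of the Borel fibration above, using $BP\UU(\HH)\simeq K(\integer,3)$ together with the homotopy type of the fiber to extract the same groups.
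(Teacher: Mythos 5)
Your proposal is correct and takes essentially the same route as the paper: both reduce the component of the homotopy quotient through $a$ to $B\bigl(P\UU(\HH)_a\bigr)=B\bigl(P\UU(\HH)^{G_a}\bigr)$ and then read off the homotopy groups from Theorem \ref{theorem homotopy groups of PU(H)G} via the shift $\pi_i(B\Gamma)\cong\pi_{i-1}(\Gamma)$. The subtle point you flag --- that the $P\UU(\HH)$-orbits in $Hom_{st}(G,P\UU(\HH))$ are exactly the path components, so that the homotopy orbit of a single orbit computes the whole component --- is implicitly assumed (and not verified) in the paper's argument as well, so your treatment is, if anything, more careful on this point.
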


\begin{proof}
From the analysis done at the beginning of Section \ref{subsection
homotopy quotient} we know that the connected components of the
homotopy quotient $$EP\UU(\HH) \times_{P\UU(\HH)}
Hom_{st}(G,P\UU(\HH))$$ are parameterized by $Ext_c(G,S^1)$. Also
from Section \ref{subsection homotopy quotient}, we know that if
we take any stable homomorphism $a:G \to P\UU(\HH)$, the connected
component defined by $a$ is homotopy equivalent to $$EP\UU(\HH)
\times_{P\UU(\HH)_a} \{a\} \cong B (P\UU(\HH)_a ) = B(
P\UU(\HH)^{G_a} ).$$

By Theorem \ref{theorem homotopy groups of PU(H)G} we know that $\pi_0(P\UU(\HH)^{G_a})=Hom(G,S^1)$
is an isomorphism of groups, and therefore we have that
$$\pi_1\left(B( P\UU(\HH)^{G_a} )\right) = Hom(G,S^1).$$
Also from Theorem \ref{theorem homotopy groups of PU(H)G} we know
that $\pi_2(P\UU(\HH)^{G_a})= \integer$ and that the other
homotopy groups are trivial. Therefore
$$\pi_3\left(B( P\UU(\HH)^{G_a} )\right)= \integer$$ and
$$\pi_i\left(B( P\UU(\HH)^{G_a} )\right)= 0$$
for $i=2$ and $i>3$.
\end{proof}

The homotopy groups described previously are precisely the
cohomology groups for the classifying space $BG$ in certain
degree, let us see:

\begin{cor} \label{corollary equivalence of homotopy type for G}
For $G$ a compact Lie group there are isomorphisms
$$\pi_i\left(EP\UU(\HH) \times_{P\UU(\HH)} Hom_{st}(G,P\UU(\HH)) \right) =
H^{3-i}(BG, \integer).$$
\end{cor}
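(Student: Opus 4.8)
The plan is to read the homotopy groups of the homotopy quotient off Theorem~\ref{theorem homotopy groups BPU(H)} and match them degree by degree against $H^{3-i}(BG,\integer)$, so that the corollary reduces to four classical computations of the integral cohomology of $BG$ in low degree. Explicitly, Theorem~\ref{theorem homotopy groups BPU(H)} records $\pi_0 = Ext_c(G,S^1)$, $\pi_1 = Hom(G,S^1)$, $\pi_2 = 0$, $\pi_3 = \integer$, and $\pi_i = 0$ for $i \geq 4$. Since $H^{3-i}(BG,\integer)$ vanishes automatically for $i \geq 4$ (negative degree), the statement is equivalent to the four isomorphisms
$$H^0(BG,\integer) \cong \integer, \qquad H^1(BG,\integer) = 0,$$
$$H^2(BG,\integer) \cong Hom(G,S^1), \qquad H^3(BG,\integer) \cong Ext_c(G,S^1),$$
which I would verify one at a time.

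The two lowest degrees are immediate. As $BG$ is path-connected we get $H^0(BG,\integer) \cong \integer$, matching $\pi_3$. For $H^1$, universal coefficients gives $H^1(BG,\integer) \cong Hom(H_1(BG;\integer),\integer)$, and since $\pi_1(BG) \cong \pi_0(G)$ is finite for a compact Lie group its abelianization $H_1(BG;\integer)$ is finite, so the group vanishes, matching $\pi_2 = 0$.

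For the remaining two degrees I would use the single uniform identification, valid for a compact Lie group $G$,
$$H^n_{\mathrm{top}}(G;S^1) \cong H^{n+1}(BG,\integer),$$
between the continuous group cohomology of $G$ with $S^1$-coefficients and the integral cohomology of the classifying space. At $n=1$ the left-hand side is by definition the character group $Hom(G,S^1)$, giving $H^2(BG,\integer) \cong Hom(G,S^1)$; here one may alternatively argue directly, since $P\UU(\HH)$ is a $K(\integer,2)$ and hence a model of $BS^1$, so a character $\chi\colon G\to S^1$ yields $B\chi\colon BG\to BS^1$ and thus a class in $H^2(BG,\integer)=[BG,K(\integer,2)]$, and this assignment is a group isomorphism. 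At $n=2$ the left-hand side classifies $S^1$-central extensions of $G$, i.e. it is $Ext_c(G,S^1)$, giving $H^3(BG,\integer) \cong Ext_c(G,S^1)$.

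The main obstacle is justifying the degree-shift isomorphism in degree three. The cleanest route is the coefficient sequence $0 \to \integer \to \real \to S^1 \to 0$ of topological groups: averaging cochains against normalized Haar measure over the compact group $G$ shows $H^n_{\mathrm{top}}(G;\real) = 0$ for $n \geq 1$, so the associated long exact sequence collapses to the shift $H^n_{\mathrm{top}}(G;S^1) \cong H^{n+1}_{\mathrm{top}}(G;\integer)$, after which one identifies $H^{*}_{\mathrm{top}}(G;\integer)$ with $H^{*}(BG,\integer)$ for compact Lie $G$. Making the choice of continuous group cohomology precise (Segal--Mitchison continuous cochains or Moore's measurable cochains) and verifying these two inputs is the delicate part; granting them, the four isomorphisms assemble and the corollary follows at once.
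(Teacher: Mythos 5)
Your proposal is correct, and its skeleton --- reading $\pi_0,\dots,\pi_3$ off Theorem \ref{theorem homotopy groups BPU(H)} and matching them degree by degree against $H^{3-i}(BG,\integer)$, so that everything reduces to the four low-degree computations of $H^*(BG,\integer)$ --- is the same reduction the paper makes. You diverge from the paper in two ways. First, the paper does not merely match abstract groups: it constructs a natural comparison map by classifying the projective unitary bundle that the action (\ref{left K action}) induces over $\bigl(EP\UU(\HH) \times Hom_{st}(G,P\UU(\HH))\bigr)\times_G EG$ and passing to the adjoint map $EP\UU(\HH) \times_{P\UU(\HH)} Hom_{st}(G,P\UU(\HH)) \to Maps(BG,BP\UU(\HH))$, and then argues that this particular map induces the isomorphisms. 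This extra structure is what gets used later: equation \eqref{homotopy equivalence of B with Maps for K finite} and Proposition \ref{proposition BB = MM} need a specific map to be a (weak) homotopy equivalence, which your purely abstract degree-by-degree matching would not supply. Second, in the two nontrivial degrees the paper simply cites \cite[Prop.~4]{LashofMaySegal} for $Hom(G,S^1)\cong[BG,BS^1]=H^2(BG,\integer)$ and \cite[Prop.~6.3]{AtiyahSegal} for $Ext_c(G,S^1)\cong H^3(BG,\integer)$, whereas you derive both uniformly from the shift $H^n_{\mathrm{top}}(G;S^1)\cong H^{n+1}(BG,\integer)$ via the exponential sequence, vanishing of real cohomology by averaging, and the identification of group cohomology with discrete coefficients with $H^*(BG;\integer)$. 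This is legitimate and pleasantly uniform (it is essentially how the cited Atiyah--Segal proposition is itself proved), but note the cost: naive continuous cochains are insufficient, since the extensions in $Ext_c(G,S^1)$ are only locally trivial $S^1$-bundles, so you genuinely need Moore/Segal--Mitchison cohomology, the theorem (Wigner, Segal) that it computes $H^*(BG;\integer)$ for discrete coefficients, and the classification of topological extensions by $H^2$ in that theory --- heavy inputs that you correctly flag but which are exactly what the paper's two citations encapsulate; also, your ``direct'' alternative in degree two (that $\chi\mapsto B\chi$ is bijective) is not really an independent argument but precisely the content of the Lashof--May--Segal result. In short: the paper's route buys naturality of the isomorphism (needed downstream) with two external citations; yours buys a self-contained, uniform cohomological derivation at the price of heavier machinery, and with the appropriate references inserted it is a complete proof of the stated isomorphisms.
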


\begin{proof}

The action of $G$ on $EP\UU(\HH) \times Hom_{st}(G,P\UU(\HH))$
given by equation (\ref{left K action}) defines a projective
unitary bundle on the homotopy quotients
$$\xymatrix{
P\UU(\HH) \ar[r] & (EP\UU(\HH) \times Hom_{st}(G,P\UU(\HH)))
\times_G EG \ar[d]& \\
& EP\UU(\HH) \times_{P\UU(\HH)} Hom_{st}(G,P\UU(\HH)) \times BG &
}$$ classified by a map from the base to $BP\UU(\HH)$, which
induces an adjoint map
\begin{eqnarray} \label{induced map to BG}
EP\UU(\HH) \times_{P\UU(\HH)} Hom_{st}(G,P\UU(\HH)) \to Maps(BG,
BP\UU(\HH)). \end{eqnarray}

At the level of homotopy groups we know that
$$\pi_i(Maps(BG, BP\UU(\HH)) \cong H^{3-i}(BG, \integer)$$ and
therefore we get the desired map
$$\pi_i\left(EP\UU(\HH) \times_{P\UU(\HH)} Hom_{st}(G,P\UU(\HH)) \right)
\to H^{3-i}(BG, \integer).$$

The space $BG$ is connected and therefore $H^0(BG, \integer)=
\integer$, agreeing with the third homotopy group of the homotopy
quotient.

The group $G$ being compact implies that $\pi_0(G)$ is finite.
Hence $\pi_1(BG)$ is also finite and we have that
$H^1(BG,\integer)=0$; this shows the isomorphism for $i=2$.

In \cite[Prop. 4]{LashofMaySegal} it is proven that the natural
map
$$Hom(G,A) \to [BG, BA]$$
is an isomorphism whenever $G$ and $A$ are compact Lie groups and
$A$ is abelian. Therefore
$$Hom(G, S^1) \stackrel{\cong}{\to} [BG,BS^1] = H^2(BG,
\integer);$$ this shows the isomorphism for $i=1$.

Finally, in \cite[Prop. 6.3]{AtiyahSegal} Atiyah and Segal prove
that $H^3(BG, \integer) \cong Ext_c(G,S^1)$. This, together with
the fact that the connected component of $[(a, f)]$ in the left
hand side of (\ref{induced map to BG}), maps to the connected
component of $Bf: BG \to BP\UU(\HH)$ imply that the isomorphism
holds for $i=0$.
\end{proof}

\section{Equivariant stable projective unitary bundles and their
classification} \label{section Equivariant stable bundles}

This chapter is devoted to set-up the framework for generalizing
the twistings of K-theory obtained by principal
$P\UU(\HH)$-bundles to the equivariant case. We start the
chapter by giving the definition of an {\it equivariant stable
projective unitary bundle}, we study the local objects, i.e. the 
equivariant stable
projective unitary bundles over the spaces $G/K$,  and then we proceed to show 
how they can be classified.

Let us begin by clarifying which type of spaces we will be working
with.

\subsection{Preliminaries}

\subsubsection{} Throughout this chapter $G$ will be a Lie group  and $X$ will be a proper $G-ANR$; let us
recall what all this means:

A  $G$-space $X$ is {\it proper} if the map
$$G \times X \to X \times X \ \ \ \ \ (g,x) \mapsto (gx,x)$$
is a proper map of topological spaces (preimages of compact sets are
compact) and the action map $G \times X \to X, \ (g,x) \mapsto gx$
is a closed map.

A $G$ space $X$ is a $G-ANR$ ($G$-equivariant absolute
neighborhood retract \cite{JamesSegal}) if $X$ is a separable and
metrizable $G$-space such that whenever $B$ is a normal $G$-space
and $A$ is an invariant closed subspace of $B$, any $G$ map $A \to
X$ can be extended over an invariant neighborhood of $A$ in $B$.
In the case that $G$ is compact Lie group acting smoothly on a compact
manifold $M$, then it can be shown that $M$ is always a $G-ANR$.

Note that any proper $G-ANR$ space can be covered with a {\it good
$G$-cover}, namely a countable $G$-cover such that any non trivial
intersection of finite $G$-open sets of the cover is equivariantly
homotopicaly equivalent to a $G$-set of the form $G/H$ with $H$ a
compact subgroup of $G$.

When studying the homotopy theory of $G$-spaces is important to
understand the homotopy theory of the system of fixed points of
the action. This is done with the category of canonical orbits,
let us recall its definition and some of its properties
 \cite[Section 7]{DavisLueck}.

\subsubsection{System of fixed points} \label{subsection system of
fix points}

The category of canonical orbits for proper $G$-actions denoted by
$\OO_G^P$ is a topological category with discrete object space
$${\rm{Obj}}(\OO_G^P) = \{G/H \colon H \ \mbox{is a compact
subgroup of} \ G \}$$ and whose morphisms consist of  $G$-maps
$${\rm Mor}_{\OO^P_G} (G/H,G/K) = {\rm Maps}(G/H,G/K)^G$$
with a topology such that the natural bijection
$${\rm Mor}_{\OO^P_G} (G/H,G/K) \cong (G/K)^H$$
is a homeomorphism.

An {\it $\OO_G^P$-space} is a contravariant functor from
$\OO_G^P$ to the category of topological spaces, and these
functors will form the objects of a topological category whose
morphisms will consist of natural transformations.

The {\it fixed point set system} of $X$, denoted by $\Phi X$, is
the $\OO_G^P$-space defined by:
$$\Phi X (G/H) := {\rm{Maps}}(G/H, X)^G= X^H$$
and if $\theta: G/H \to G/K$ corresponds to $gK \in (G/K)^H$ then
$$\Phi X (\theta)(x) := gx \in X^H$$
whenever $x \in X^K$. The functor $\Phi$ becomes a functor from
the category of proper $G$-spaces to the category of
$\OO_G^P$-spaces.

If $\XX$ is a contravariant functor from $\OO_G^P$ to spaces and $\YY$ is a covariant functor from $\OO_G^P$
to spaces one can define the space
$$\XX \times_{\OO_G^P}\YY:= \bigsqcup_{c \in {\rm{Obj}}(\OO_G^P)} \XX(c) \times \YY(c) / \sim$$
where $\sim$ is the equivalence relation generated by $(\XX(\phi)(x),y) \sim (x, \YY(\phi)(y))$
 for all morphisms $\phi: c \to d$ in $\OO_G^P$ and points $x \in \XX(d)$ and $y \in \YY(c)$.

A  model  for  the  homotopical  version  of the previous construction is defined as follows: consider the category $\FF(\XX, \YY)$
whose objects are
$${\rm{Obj}}(\FF(\XX, \YY))=\bigsqcup_{c \in {\rm{Obj}}(\OO_G^P)} \XX(c) \times \YY(c)$$
and whose morphisms consist of all triples $(x, \phi,y)$ where  $\phi: c \to d$ is a morphism in $\OO_G^P$ and
$x \in \XX(d)$ and $y \in \YY(c)$, with $\source(x, \phi,y)= (\XX(\phi)(x),y)$ and $\target(x, \phi,y)= (x, \YY(\phi)(y))$. 
Define the space
$$\XX \times_{h\OO_G^P}\YY:= |\FF(\XX, \YY)| $$
as the geometric realization of the category $\FF(\XX, \YY)$.

Now, if we consider  the covariant functor $\nabla$ from $\OO_G^P$ to spaces that to each orbit type $G/H$ it assigns
the set $G/H$, then we can consider the functors
\begin{eqnarray*}
  \_ \times_{\OO_G^P}\nabla :  \OO_G^P-{\rm spaces} & \to & {\rm proper} \ G-{\rm spaces} \\
  \XX & \mapsto & \XX \times_{\OO_G^P}\nabla\\
   \_ \times_{h\OO_G^P}\nabla :  \OO_G^P-{\rm spaces} & \to & {\rm proper} \ G-{\rm spaces} \\
  \XX & \mapsto & \XX \times_{h\OO_G^P}\nabla.
\end{eqnarray*}

Since the orbit types $G/H$ are endowed with a natural left $G$ action, then the spaces
$\XX \times_{\OO_G^P}\nabla$ and $\XX \times_{h\OO_G^P}\nabla$ become $G$-spaces by endowing them 
with the left $G$ action included by the $G$ action on the objects and morphisms of  $\nabla$.

We shall quote the following result:

\begin{theorem}  \label{theorem Davis-Lueck}
For $G$ a discrete group \cite[lemma  7.2]{DavisLueck} the functors $\Phi$ and $ \_ \times_{\OO_G^P}\nabla$
are adjoint, i.e. for a $\OO_G^P$ space $\XX$ and a proper $G$-space $Y$ there is a natural homeomorphism
$${\rm{Maps}}(\XX \times_{\OO_G^P}\nabla,Y)^G \stackrel{\cong}{\To} {\rm{Hom}}_{\OO_G^P} (\XX, \Phi Y),$$
and moreover, the adjoint of the identity map on $\Phi Y$ under the above adjunction, is a  natural $G$-homeomorphism
$$(\Phi Y) \times_{\OO_G^P}\nabla \stackrel{\cong}{\To} Y.$$
 \end{theorem}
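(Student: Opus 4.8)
The plan is to realize this adjunction as a concrete instance of the tensor--hom adjunction for space-valued functors, exploiting the two features special to $\nabla$: that $\nabla(G/H)$ is the orbit $G/H$ itself, and that evaluation at the base coset $eH$ gives a homeomorphism $\mathrm{Maps}(G/H,Y)^G \cong Y^H = \Phi Y(G/H)$. First I would unwind the left-hand side via the universal property of the coend defining $\XX \times_{\OO_G^P}\nabla$: a $G$-map $F\colon \XX \times_{\OO_G^P}\nabla \to Y$ is the same datum as a family of maps $F_{G/H}\colon \XX(G/H)\times G/H \to Y$ that is $G$-equivariant (for the $G$-action coming from $\nabla$) and satisfies the coend compatibility $F_{G/H}(\XX(\theta)(x),n)=F_{G/K}(x,\nabla(\theta)(n))$ for every morphism $\theta\colon G/H \to G/K$.

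Given such an $F$, I would define a natural transformation $\eta\colon \XX \Rightarrow \Phi Y$ by $\eta_{G/H}(x):=F_{G/H}(x,eH)$. The key point is that $G$-equivariance forces $\eta_{G/H}(x)\in Y^H$, since for $h\in H$ we have $h\cdot F_{G/H}(x,eH)=F_{G/H}(x,h\cdot eH)=F_{G/H}(x,eH)$, so $\eta_{G/H}(x)$ is $H$-fixed and lands in $\Phi Y(G/H)$. Conversely, given $\eta\in \mathrm{Hom}_{\OO_G^P}(\XX,\Phi Y)$, I would set $F_{G/H}(x,gH):=g\cdot\eta_{G/H}(x)$, which is well defined precisely because $\eta_{G/H}(x)\in Y^H$, is $G$-equivariant by construction, and satisfies the coend relation because $\eta$ is natural: for $\theta$ corresponding to $aK\in (G/K)^H$ one has $\Phi Y(\theta)(w)=aw$, and the naturality square $\eta_{G/H}\circ\XX(\theta)=\Phi Y(\theta)\circ\eta_{G/K}$ translates verbatim into the required identity $F_{G/H}(\XX(\theta)(x),gH)=F_{G/K}(x,gaK)$. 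These two assignments are mutually inverse, and naturality of the bijection in both $\XX$ and $Y$ is routine, which establishes the adjunction homeomorphism.

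For the ``moreover'' statement I would specialize to $\XX=\Phi Y$ and track the counit $\epsilon\colon (\Phi Y)\times_{\OO_G^P}\nabla \to Y$, the image of $\mathrm{id}_{\Phi Y}$; by the formula above it is $\epsilon([y,gH])=g\cdot y$ for $y\in Y^H$. Surjectivity is immediate from properness: each $z\in Y$ has compact stabilizer $G_z$, hence an object of $\OO_G^P$, and $\epsilon([z,eG_z])=z$. For injectivity I would show every class reduces to this canonical representative. If $y\in Y^H$ and $z:=gy$ has stabilizer $L:=G_z$, then $gHg^{-1}\subseteq L$, so $g^{-1}L\in (G/L)^H$ defines a morphism $\theta\colon G/H\to G/L$ with $\Phi Y(\theta)(z)=g^{-1}z=y$ and $\nabla(\theta)(eH)=g^{-1}L$. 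The coend relation then gives $[y,eH]=[z,g^{-1}L]$, and applying the $G$-action yields
$$[y,gH]=g\cdot[y,eH]=g\cdot[z,g^{-1}L]=[z,eL].$$
Thus any class mapping to $z$ equals $[z,eG_z]$, so $\epsilon$ is a bijection.

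Finally, $\epsilon$ is continuous by the universal property of the quotient, so the remaining task is to upgrade this continuous bijection to a homeomorphism; this is the step I expect to be the main obstacle, and it is where the hypotheses that $G$ is discrete and $Y$ is a proper $G$-ANR enter. The formal adjunction above carries no topological content here, whereas producing a continuous inverse requires the local structure of proper actions: around an orbit $Gz$ one writes a neighborhood as $G\times_{G_z}S$ for a $G_z$-slice $S$, checks that $\epsilon$ restricts to a homeomorphism on such a chart, and patches these local inverses together. This local-to-global argument is exactly the work done in \cite[Lemma 7.2]{DavisLueck}, which I would invoke to conclude that $\epsilon$ is a $G$-homeomorphism.
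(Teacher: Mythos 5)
Your proposal is correct, but it cannot be ``the same approach as the paper'' for a simple reason: the paper offers no proof of this theorem at all --- it is quoted directly from Davis--L\"uck \cite[Lemma 7.2]{DavisLueck}, the very reference you fall back on at the end. So what you wrote is genuinely more than what the paper provides. The explicit adjunction formulas $\eta_{G/H}(x)=F_{G/H}(x,eH)$ and $F_{G/H}(x,gH)=g\cdot\eta_{G/H}(x)$, the check that equivariance forces $\eta_{G/H}(x)\in Y^H$, the translation between naturality of $\eta$ and the coend relation for $F$, and the reduction of every class of $(\Phi Y)\times_{\OO_G^P}\nabla$ to the canonical representative $[z,eG_z]$ are all correct, and they correctly isolate where the hypotheses enter: properness gives finite stabilizers, so that $G/G_z$ is an object of $\OO_G^P$ (needed for your injectivity argument), and discreteness makes the orbits $G/H$ discrete, so that all the formulas are automatically continuous.

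The one place to push back is your final paragraph. Upgrading the continuous bijection $\epsilon$ to a homeomorphism does not require slices, the $G$-ANR hypothesis, or any local-to-global patching; you already have all the pieces for a one-line finish. The trivial subgroup $\{e\}$ is a finite subgroup of $G$, hence $G/\{e\}$ is an object of $\OO_G^P$, so the disjoint union defining the coend contains the summand $Y^{\{e\}}\times G/\{e\}=Y\times G$, and the map $s\colon Y\to (\Phi Y)\times_{\OO_G^P}\nabla$, $z\mapsto [z,e]$, through this summand is continuous. Moreover the morphism $G/\{e\}\to G/G_z$ corresponding to $eG_z\in (G/G_z)^{\{e\}}$ identifies $[z,e]=[z,eG_z]$, so $s$ is precisely your canonical-representative map, now visibly continuous. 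Since $\epsilon\circ s=\mathrm{id}_Y$ and you have already proved $\epsilon$ injective, $s$ is a two-sided inverse, and it is automatically $G$-equivariant, being the inverse of a $G$-equivariant bijection. (Equivalently: the map $q\colon\bigsqcup_H Y^H\times G/H\to Y$ admits the continuous section $z\mapsto (z,e)$, hence is a quotient map, and an injective map induced by a quotient map onto its fibers is a homeomorphism.) So the closing appeal to \cite{DavisLueck} is a citation of convenience, not of necessity: with this observation your argument is a complete, self-contained proof, which is more than the paper itself supplies.
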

 
 Note furthermore that there is a natural  $\OO_G^P$-homotopy equivalence
$$  \XX \to \Phi(\XX \times_{h\OO_G^P}\nabla).$$ Similar statements in the case that $G$ is a compact Lie group can be found in
\cite{Elmendorf}.

\subsection{Equivariant stable projective unitary bundles}

A first guess may suggest that the appropriate equivariant
twistings for K-theory from a point of view of index theory are
$G$-equivariant projective unitary bundles. These bundles may be
used to define associated bundles whose fibers are $Fred(\HH)$ the
space of Fredholm operators on $\HH$, but a closer look at them
leads us to think that we need to put further conditions on how
the group $G$ acts on the fibers; this local condition is that the
local isotropy group should act on the fibers by stable
homomorphisms. The precise definition is as follows:

\begin{definition}\label{def projective unitary G-equivariant stable bundle}
A {\it projective unitary $G$-equivariant stable bundle} over $X$
is a principal $P\UU(\HH)$-bundle
$$P\UU(\HH) \To P \To X$$
where $P\UU(\HH)$ acts on the right, endowed with a left $G$
action lifting the action on $X$ such that:
\begin{itemize}
\item the left $G$-action commutes with the right $P\UU(\HH)$
action, and \item for all $x \in X$ there exists a
$G$-neighborhood $V$ of $x$ and a $G_x$-contractible slice $U$ of
$x$ with $V$ equivariantly homeomorphic to $ U \times_{G_x} G$
with the action $$G_x \times (U \times G) \to U \times G, \ \ \ \
k \cdot(u,g)= (ku, g k^{-1}),$$ together with a local
trivialization
$$P|_V \cong  (P\UU(\HH) \times U) \times_{G_x} G$$ where the action of the isotropy group
is:
 \begin{eqnarray*}
 G_x \times \left( (P\UU(\HH) \times U) \times G \right)& \to & (P\UU(\HH) \times U) \times G
 \\
\ \left(k , ((F,y),g)\right)& \mapsto & ((f_x(k)F, ky), g k^{-1})
\end{eqnarray*} with $f_x : G_x \to P\UU(\HH)$ a fixed stable
homomorphism (see Definition \ref{definition stable}).
\end{itemize}

\end{definition}

Two projective unitary $G$-equivariant stable bundles $P', P$ over
$X$ will be isomorphic if there is a $G$-equivariant homeomorphism
$P' \to P$ of principal $P\UU(\HH)$ bundles. The isomorphism
classes of projective unitary $G$-equivariant stable bundles over
$X$ will be denoted by
$$\Bun^G_{st}(X, P\UU(\HH)).$$

\subsubsection{} The results of Chapter \ref{section PU(H)} will
provide us with the first examples of projective unitary
equivariant stable bundles. These examples will be the building
blocks of the construction of the universal projective unitary
$G$-equivariant stable bundle as well as their classification;
this will be done in Chapter \ref{chapter universal bundle} for the case of discrete and proper actions.

Let $K$ be a compact Lie group and consider the projective unitary
bundle \begin{equation} \label{projective unitary bundle K
compact}
\xymatrix{
P\UU(\HH) \ar[r] & EP\UU(\HH) \times Hom_{st}(K, P\UU(\HH)) \ar[d] \\
& EP\UU(\HH) \times_{P\UU(\HH)} Hom_{st}(K, P\UU(\HH)) }
\end{equation} where the base is the space whose homotopy groups
are calculated in Theorem \ref{theorem homotopy groups BPU(H)}.

The right-$P\UU(\HH)$ action on the total space of the bundle in
(\ref{projective unitary bundle K compact}) is defined as:
\begin{eqnarray}
 \nonumber EP\UU(\HH) \times Hom_{st}(K, P\UU(\HH)) \times P\UU(\HH) & \to &
EP\UU(\HH) \times Hom_{st}(K, P\UU(\HH))\\
((a,f),F) & \mapsto & (aF, F^{-1} f F) \label{right PU(H) action}
\end{eqnarray}
where $F^{-1} f F$ denotes the homomorphism  conjugate of $f$ by
$F$.

The left $K$-action on the total space of (\ref{projective unitary
bundle K compact}) is defined as:
\begin{eqnarray}
\nonumber K \times EP\UU(\HH) \times Hom_{st}(K, P\UU(\HH)) & \to
&
EP\UU(\HH) \times Hom_{st}(K, P\UU(\HH))\\
(g,(a,f)) & \mapsto & (a f(g), f(g)^{-1} f f(g)), \label{left K
action}
\end{eqnarray}
where a simple calculation shows that the $K$-action is indeed a
left action.

It follows that the total space  of the bundle in (\ref{projective
unitary bundle K compact}) has a left $K$-action inducing a
trivial $K$ action on the base. We claim that

\begin{proposition}
The projective unitary bundle
$$\xymatrix{
P\UU(\HH) \ar[r] & EP\UU(\HH) \times Hom_{st}(K, P\UU(\HH)) \ar[d] \\
& EP\UU(\HH) \times_{P\UU(\HH)} Hom_{st}(K, P\UU(\HH)) }$$ is a
projective unitary $K$-equivariant stable bundle.
\end{proposition}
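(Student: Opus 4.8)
The plan is to verify the four requirements of Definition \ref{def projective unitary G-equivariant stable bundle} in the case $G=K$, exploiting the fact, already recorded above, that the left $K$-action on the base $X:=EP\UU(\HH) \times_{P\UU(\HH)} Hom_{st}(K, P\UU(\HH))$ is trivial. Because the $K$-action on $X$ is trivial, the isotropy group at every point is all of $K$, and the slice datum $V\cong U\times_{G_x}G$ collapses: with $G_x=G=K$ and trivial action on the slice one has $U\times_K K\cong U$. Thus the local condition reduces to producing, around each point, an ordinary contractible neighborhood $U$ together with a trivialization $P|_U\cong P\UU(\HH)\times U$ on which $K$ acts by $(k,(F,y))\mapsto (f_x(k)F,y)$ for a fixed stable homomorphism $f_x$.

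First I would dispose of the formal points. The commutation of the left $K$-action (\ref{left K action}) with the right $P\UU(\HH)$-action (\ref{right PU(H) action}) is a direct computation: both composites carry $(a,f)$ to $(af(g)F,\,F^{-1}f(g)^{-1}ff(g)F)$, using $(F^{-1}fF)(g)=F^{-1}f(g)F$. The same formula gives $g\cdot(a,f)=(af(g),f(g)^{-1}ff(g))=(a,f)\cdot f(g)$, so the left $K$-action moves a point only along its right $P\UU(\HH)$-orbit; hence it covers the trivial action on $X$, and on the fiber over $x_0=[a_0,f_0]$ the group $K$ acts through $f_0$ followed by the right $P\UU(\HH)$-action.

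The heart of the argument is the construction of the local equivariant trivialization, and here I would use Proposition \ref{proposition pi zero of stable hom} to normalize the homomorphism coordinate. Fix $x_0=[a_0,f_0]$. Since $P\UU(\HH)$ is connected, the conjugation orbit of $f_0$ is connected; and any two stable homomorphisms in the same path component of $Hom_{st}(K,P\UU(\HH))$ determine the same class in $Hom_{st}(K,P\UU(\HH))/P\UU(\HH)\cong Ext(K,S^1)$, hence are conjugate by the injectivity in Proposition \ref{proposition pi zero of stable hom}. Therefore the path component of $f_0$ equals its conjugation orbit $P\UU(\HH)/P\UU(\HH)_{f_0}$, every point of the connected component of $x_0$ in $X$ can be written as $[a,f_0]$, and that component is identified with $EP\UU(\HH)/P\UU(\HH)_{f_0}$ via $a\mapsto[a,f_0]$. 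Choosing a contractible neighborhood $U$ of $x_0$ and a continuous local section $a(\cdot)\colon U\to EP\UU(\HH)$ of this quotient map, the assignment $s(y)=(a(y),f_0)$ is a section of $P|_U$ with \emph{constant} homomorphism coordinate. The induced trivialization $\Phi(F,y)=s(y)\cdot F=(a(y)F,F^{-1}f_0F)$ then satisfies $g\cdot\Phi(F,y)=(g\cdot s(y))\cdot F=(s(y)\cdot f_0(g))\cdot F=\Phi(f_0(g)F,y)$, which is exactly the required local model with $f_x=f_0$, a stable homomorphism by hypothesis. These local sections simultaneously exhibit $P\to X$ as a locally trivial principal $P\UU(\HH)$-bundle.

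The main obstacle is the existence of the continuous local section $a(\cdot)$, that is, the assertion that the orbit map $EP\UU(\HH)\to EP\UU(\HH)/P\UU(\HH)_{f_0}$ is a locally trivial principal $P\UU(\HH)_{f_0}$-bundle. This is where the infinite-dimensionality of $P\UU(\HH)$ enters, and where the choice of model for $EP\UU(\HH)$ and of the topology on $P\UU(\HH)$ matters: one needs the closed subgroup $P\UU(\HH)_{f_0}$ to act with local slices, which for a suitably well-pointed and numerable model of the free contractible $P\UU(\HH)$-space $EP\UU(\HH)$ is standard but should be spelled out. Once this local-section statement is in hand, every remaining step is formal.
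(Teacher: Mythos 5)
Your proposal is correct and follows essentially the same route as the paper's own proof: the same direct computation that the left $K$-action commutes with the right $P\UU(\HH)$-action, the same appeal to Proposition \ref{proposition pi zero of stable hom} to see that $P\UU(\HH)$ acts transitively on the connected component of the homomorphism coordinate, and the same punchline that a local section with \emph{constant} homomorphism coordinate $f$ transports the $K$-action to left multiplication by $f(g)$. The only difference is cosmetic --- the paper produces that section by taking an arbitrary local section over a contractible neighborhood and straightening it via a lift of its homomorphism coordinate through the coset bundle $P\UU(\HH)\to P\UU(\HH)/P\UU(\HH)_{f}$, whereas you section the quotient $EP\UU(\HH)\to EP\UU(\HH)/P\UU(\HH)_{f_0}$ directly --- and the local-triviality assertion you flag as the remaining obstacle is precisely of the same kind as the ones the paper itself invokes without proof at the corresponding step.
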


\begin{proof}
Let us first prove that the left $K$-action commutes with the
right $P\UU(\HH)$ action; on the one side we have
\begin{eqnarray*}
\left( g(a,f) \right) F& =& \left( a f(g), f(g)^{-1}f f(g))
\right) F \\
& = & \left(a f(g) F, F^{-1} f(g)^{-1} f f(g) F \right)
\end{eqnarray*}
and on the other
\begin{eqnarray*}
g\left( (a,f)F \right) & =& \left( a F, F^{-1}f F)
\right) F \\
& = & \left(a F F^{-1} f(g) F, F^{-1} f(g)^{-1}F F^{-1} f F F^{-1}
f(g) F\right) \\
& = & \left(a f(g) F, F^{-1} f(g)^{-1} f f(g) F \right);
\end{eqnarray*}
which shows that the actions commute.

Now, for the second condition let us take any point $x \in
EP\UU(\HH) \times_{P\UU(\HH)} Hom_{st}(K, P\UU(\HH))$ and let us
choose a contractible neighborhood $V$ of $x$. Because the
restricted bundle $$ \left( EP\UU(\HH) \times Hom_{st}(K,
P\UU(\HH))\right)|_V $$ is trivializable we may find a section
\begin{eqnarray} \label{diagram section to EPxHom}\xymatrix{
& EP\UU(\HH) \times Hom_{st}(K, P\UU(\HH)) \ar[d] \\
V \ar[ur]^\alpha \ar@{->}[r] & EP\UU(\HH) \times_{P\UU(\HH)}
Hom_{st}(K, P\UU(\HH))  }\end{eqnarray} which can be decomposed as
$$\alpha(v) = \left(\lambda(v), \eta(v) \right)$$ with $\lambda: V
\to EP\UU(\HH)$ and $\eta: V \to Hom_{st}(K, P\UU(\HH))$.

Let us denote $f := \eta(x)$ and let us consider the connected
component $W_f \subset Hom_{st}(K, P\UU(\HH))$ containing $f$. In
view of Proposition \ref{proposition pi zero of stable hom} we
know that the group $P\UU(\HH)$ acts transitively on $W_f$ and
therefore we have a non canonical homeomorphism
$$ P\UU(\HH)/P\UU(\HH)_f \stackrel{\cong}{\To} W_f, \ \ \ \ \ [F] \mapsto F^{-1}f F $$
where $$P\UU(\HH)_f= \{F \in P\UU(\HH) | F^{-1}fF = f \}$$ is the
isotropy group of $f$ and acts on $P\UU(\HH)$ by conjugation on
the right.

Because the bundle $$P\UU(\HH)_f \To P\UU(\HH) \To
P\UU(\HH)/P\UU(\HH)_f$$ is a principal bundle and $V$ is a
contractible set, we may find a lift $\sigma: V \to P\UU(\HH)$ of
the map $\eta: V \to W_f$ making the diagram commutative
$$\xymatrix{
&  & P\UU(\HH) \ar[d] \\
V \ar[r]_\eta  \ar[urr]^\sigma & W_f  \ar[r]^(.3)\cong &
P\UU(\HH)/P\UU(\HH)_f }$$ Hence the map $\sigma$ satisfies the
equation
$$\eta(v) = \sigma(v)^{-1} f \sigma(v)$$
for $v \in V$.

Let us now consider a different section $$\alpha' : V \to
EP\UU(\HH) \times Hom_{st}(K, P\UU(\HH))$$ of the diagram
(\ref{diagram section to EPxHom}) defined by the action of
$\sigma$ on $\alpha$, namely:
$$\alpha'(v) := \alpha(v) \cdot \sigma(v)^{-1} =
\left(\lambda(v)\sigma(v)^{-1}, \sigma(v) \eta(v) \sigma(v)^{-1}
\right) = (\lambda'(v), f)$$ where we denoted
$\lambda'(v):=\lambda(v) \sigma(v)^{-1}$.

With the section $\alpha'$ at hand we can define a local
trivialization as follows
\begin{eqnarray}
\nonumber V \times P\UU(\HH) & \stackrel{\phi}{\To} & \left(
EP\UU(\HH)
\times_{P\UU(\HH)} Hom_{st}(K,P\UU(\HH)) \right)|_V \\
(v,F) & \mapsto & \alpha'(v) \cdot F = \left( \lambda'(v) F,
F^{-1} f F \right). \label{local trivialization}
\end{eqnarray}
Transporting the $K$-action to the left-hand side of (\ref{local
trivialization}) we have that for $g \in K$ and $(v,F) \in V
\times P\UU(\HH)$,
\begin{eqnarray*}
g\cdot (v, F) & := & \phi^{-1} \left( g ( \phi(v, F))\right)\\
&=& \phi^{-1} \left( g( \lambda'(v)F, F^{-1}fF) \right)\\
&=& \phi^{-1}\left( \lambda'(v)f(g)F, F^{-1}f(g)^{-1} f f(g) F
\right) \\
&=& \left( v,f(g)F\right),
\end{eqnarray*}
which implies that the $K$ action on $P\UU(\HH)$ is by
multiplication on the left by the fixed stable homomorphism $f$.

This finishes the proof.

\end{proof}

\subsection{Local objects}

Following the notation of \cite{tomDieck, Murayama} we say that:
\begin{definition}
For $K$ a compact subgroup of $G$, a projective unitary stable
$G$-equivariant bundle over $G/K$ is a {\it{local object}}.
\end{definition}

We would like to classify the local objects. Let $P \to G/K$ be a
local object and consider the restriction $P|_{[K]}$ of $P$ to the
point $[K] \in G/K$. The canonical map $$P|_{[K]} \times_K G \to
P, \ \ \ \ \ [(x,g)] \mapsto gx$$ produces  a $G$-equivariant
isomorphism of projective unitary bundles, and by Definition
\ref{def projective unitary G-equivariant stable bundle} we know
that $P|_{[K]}$ trivializes via a map $\phi: P\UU(\HH) \cong
P|_{[K]}$ where the left $K$ action on $P|_{[K]}$ induces a $K$
action on $P\UU(\HH)$ via a stable homomorphism $f \in Hom_{st}(K,
P\UU(\HH))$.

We can take the $K$ action on $P\UU(\HH) \times G$ given by $k
\cdot (F,g) := (f(k)F, gk^{-1})$ and we obtain an isomorphism
$$P\UU(\HH) \times_K G \stackrel{\cong}{\to} P, \ \ \ \ \ [(F,g)] \mapsto g\phi(F)$$
of projective unitary stable $G$-equivariant bundles.

We can conclude that \begin{lemma} The isomorphism classes of
local objects over $G/K$ are in one to one correspondence with
conjugacy classes of stable homomorphisms from $K$ to $P\UU(\HH)$,
which in view of Proposition \ref{proposition pi zero of stable
hom} is isomorphic to the set of isomorphism classes of  $S^1$-central extensions of $K$.
\end{lemma}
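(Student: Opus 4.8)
The plan is to establish the claimed bijection in two stages: first reducing an arbitrary local object to the restriction over the base point $[K]$, and then identifying the resulting data with a stable homomorphism up to conjugacy. The excerpt has already done most of the work in the discussion preceding the Lemma, so the proof essentially amounts to assembling those observations and invoking Proposition \ref{proposition pi zero of stable hom} to translate from conjugacy classes of stable homomorphisms to $S^1$-central extensions of $K$.

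First I would recall the canonical reconstruction isomorphism. Given a local object $P \to G/K$, restriction to the fiber over $[K]$ yields a principal $P\UU(\HH)$-bundle $P|_{[K]}$ with a left $K$-action, and the map $P|_{[K]} \times_K G \to P$, $[(x,g)] \mapsto gx$, is a $G$-equivariant isomorphism of projective unitary bundles. This shows that $P$ is completely determined by the $K$-equivariant bundle $P|_{[K]}$ over the point $[K]$. By Definition \ref{def projective unitary G-equivariant stable bundle}, such a $K$-equivariant bundle over a point trivializes via a homeomorphism $\phi \colon P\UU(\HH) \cong P|_{[K]}$ under which the left $K$-action is transported to left multiplication by a fixed stable homomorphism $f \in Hom_{st}(K, P\UU(\HH))$. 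Conversely, any stable homomorphism $f$ produces a local object through the associated bundle $P\UU(\HH) \times_K G$ with the action $k \cdot (F,g) = (f(k)F, gk^{-1})$, and the map $[(F,g)] \mapsto g\phi(F)$ recovers $P$; this gives a well-defined assignment in both directions.

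The step requiring genuine care, and the one I expect to be the main obstacle, is verifying that this correspondence is well-defined and injective at the level of \emph{isomorphism classes} rather than just on the nose: two local objects are isomorphic precisely when the associated stable homomorphisms are conjugate in $P\UU(\HH)$. One direction is routine, since conjugating $f$ by $F \in P\UU(\HH)$ amounts to precomposing the trivialization $\phi$ with right multiplication by $F$, which yields a $G$-equivariant isomorphism of the associated bundles. For the converse I would take a $G$-equivariant bundle isomorphism $P' \to P$, restrict it to the fibers over $[K]$ to get a $K$-equivariant isomorphism $P'|_{[K]} \to P|_{[K]}$, and then compose with the chosen trivializations $\phi', \phi$; the resulting self-map of $P\UU(\HH)$ must be right multiplication by a single element (since it is a map of right principal $P\UU(\HH)$-spaces), and its intertwining of the two $K$-actions forces the two stable homomorphisms to be conjugate by that element.

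Finally, having identified the isomorphism classes of local objects over $G/K$ with the set $Hom_{st}(K, P\UU(\HH))/P\UU(\HH)$ of conjugacy classes of stable homomorphisms, I would invoke Proposition \ref{proposition pi zero of stable hom}, which provides the bijection $Hom_{st}(K, P\UU(\HH))/P\UU(\HH) \stackrel{\cong}{\To} Ext(K, S^1)$ between conjugacy classes of stable homomorphisms and isomorphism classes of $S^1$-central extensions of $K$. Composing the two bijections completes the proof of the Lemma.
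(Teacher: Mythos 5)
Your proposal is correct and follows essentially the same route as the paper: restrict to the fiber over $[K]$, read off a stable homomorphism from the trivialization given by Definition \ref{def projective unitary G-equivariant stable bundle}, reconstruct $P$ as the associated bundle $P\UU(\HH) \times_K G$, and invoke Proposition \ref{proposition pi zero of stable hom}; you in fact make explicit the injectivity step (isomorphic local objects yield conjugate homomorphisms) that the paper leaves implicit. One small slip worth fixing: a right-$P\UU(\HH)$-equivariant self-map of $P\UU(\HH)$ is \emph{left} multiplication by a fixed element (right multiplication by a non-central element is not right-equivariant), but this does not affect your conjugation argument, which goes through verbatim with the element acting on the left.
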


We have classified the projective unitary stable $G$-equivariant
bundles over the $G$-space $Y \times_K G$ when $Y$ is a point. In
the next section we will generalize the classification whenever
$Y$ is a trivial $K$-space.

\subsection{Universal projective unitary stable $K$-equivariant bundle over trivial
$K$-spaces}\label{section universal for trivial K spaces}  We will
show that the universal projective unitary stable $K$-equivariant
is bundle precisely the bundle
\begin{eqnarray} \label{universal K bundle}
\xymatrix{
P\UU(\HH) \ar[r] & EP\UU(\HH) \times Hom_{st}(K,P\UU(\HH)) \ar[d] \\
&EP\UU(\HH) \times_{P\UU(\HH)} Hom_{st}(K,P\UU(\HH)).}
\end{eqnarray}
To carry out the proof we will use two groupoids $\DD_K$ and
$\CC_K$ and a functor between the two $\DD_K \to \CC_K$ such that
the geometric realization of the functor $| \DD_K| \to |\CC_K|$
is homotopy equivalent to the bundle in (\ref{universal K
bundle}). The groupoid $\CC_K$ will be the action groupoid
$$\CC_K:=Hom_{st}(K, P\UU(\HH)) \rtimes P\UU(\HH)$$ whose objects
are the stable homomorphisms and whose space of morphisms is
$$Hom_{st}(K, P\UU(\HH)) \times P\UU(\HH),$$
together with the structural maps
$$\source(f,F)=f, \ \ \ \target(f,F)=F^{-1}fF, \ \ \ \inverse(f,F)=(F^{-1}fF, F^{-1}),$$
$$\composition((f,F),(F^{-1}fF,G))=(f,FG) \ \ \ {\rm{and}} \ \ \identity(f) = (f,1).$$

\vspace{0.3cm}

The groupoid $\DD_K$ will consist of
$${\rm{Mor}}(\DD_K) := Hom_{st}(K, P\UU(\HH)) \times P\UU(\HH)  \times P\UU(\HH)$$
$${\rm{Obj}}(\DD_K):=Hom_{st}(K, P\UU(\HH)) \times P\UU(\HH)$$
together with structural maps
$$\source(f,F,G)=(f,F), \ \ \ \target(f,F,G)=(GF^{-1}fFG^{-1},G),$$
$$\inverse(f,F,G)=(GF^{-1}fFG^{-1}, G,F),$$
$$\composition((f,F,G),(GF^{-1}fFG^{-1},G,L))=(f,F,L) $$
$${\rm{and}} \ \ \identity(f,F) = (f,F,1).$$

The map $\gamma:{\rm{Mor}}(\DD_K) \to {\rm{Mor}}(\CC_K)$, $(f,F,G)
\mapsto (f,FG^{-1})$ induces a functor $\gamma:\DD_K \to \CC_K$,
and therefore we have a map $|\gamma|:|\DD_K| \to |\CC_K|$ at the
level of their geometric realizations. The results of
\cite[Section 3]{Segal} show that $|\DD_K|
\stackrel{|\gamma|}{\to} |\CC_K|$ is indeed projective unitary
bundle
$$P\UU(\HH) \to |\DD_K| \to |\CC_K|$$
and also that it is homotopy equivalent to the bundle in
(\ref{universal K bundle}); recall from the notation, section
\ref{Notation}, that in this paper the geometrical realization of
a category $\WW$ is defined as the geometrical realization of the
category $\WW_{\bf N}$.

Endowing the groupoid $\DD_K$ with the following left action of
the group $K$
$$K \times \DD_K \to \DD_K, \ \ \ \ (g,(f,F,G)) \mapsto (f,f(g)F, GF^{-1}f(g)F),$$
a simple calculation shows that the induced action on $\CC_K$ is
trivial. Hence we have
\begin{lemma}
The bundle $P\UU(\HH) \to |\DD_K| \stackrel{|\gamma|}{\to}
|\CC_K|$ is a projective unitary stable $K$-equivariant bundle
over the trivial $K$ space $|\CC_K|$.
\end{lemma}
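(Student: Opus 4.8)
The plan is to verify the three requirements of Definition~\ref{def projective unitary G-equivariant stable bundle} directly, exploiting that the base $|\CC_K|$ carries the \emph{trivial} $K$-action. For a trivial $K$-space every isotropy group equals $K$ and the slice decomposition $U\times_{G_x}G$ of that definition degenerates, so the condition reduces to the following: $P\UU(\HH)\to|\DD_K|\stackrel{|\gamma|}{\To}|\CC_K|$ is a principal $P\UU(\HH)$-bundle carrying a commuting left $K$-action that covers the trivial action on the base, and around every point there is a contractible chart on which the bundle is trivial and $K$ acts on the fibre $P\UU(\HH)$ by left multiplication through one fixed stable homomorphism.

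The two global conditions follow by direct substitution. The right $P\UU(\HH)$-action on $\DD_K$ is $(f,F,G)\cdot H=(f,FH,GH)$; it covers the identity of $\CC_K$ and is free and transitive on the fibres of $\gamma$, which is how $|\gamma|$ is principal. Plugging this into the left $K$-action $g\cdot(f,F,G)=(f,f(g)F,\,GF^{-1}f(g)F)$ gives $g\cdot((f,F,G)\cdot H)=(f,f(g)FH,\,GF^{-1}f(g)FH)=(g\cdot(f,F,G))\cdot H$, so the two actions commute on $\DD_K$, hence on $|\DD_K|$. Likewise $\gamma(g\cdot(f,F,G))=(f,\,f(g)F\,(GF^{-1}f(g)F)^{-1})=(f,FG^{-1})=\gamma(f,F,G)$, so $\gamma$ is $K$-invariant and the induced $K$-action on $|\CC_K|$ is trivial.

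The substance is the local condition. Fix $x\in|\CC_K|$, take a contractible chart $V$ and a local section, trivializing $P|_V\cong P\UU(\HH)\times V$. Since the $K$-action commutes with the right $P\UU(\HH)$-action and covers the trivial action on $V$, in this trivialization it reads $g\cdot(F,v)=(c_v(g)F,v)$ for a continuous family of homomorphisms $c_v\colon K\to P\UU(\HH)$, well defined up to pointwise conjugation, so that $x\mapsto[c_x]\in Hom(K,P\UU(\HH))/P\UU(\HH)$ is a globally defined continuous map. Over an object $f\in\mathrm{Obj}(\CC_K)$, viewed as a vertex, the fibre is $\{(f,F)\}$ and the action is $g\cdot(f,F)=(f,f(g)F)$, so $c_f=f$, a stable homomorphism. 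Now by Proposition~\ref{proposition pi zero of stable hom} the orbits of $P\UU(\HH)$ on $Hom_{st}(K,P\UU(\HH))$ are the homogeneous spaces $P\UU(\HH)/P\UU(\HH)_f$, indexed by $Ext(K,S^1)$; using that stability is a locally constant condition one sees $Hom_{st}(K,P\UU(\HH))$ is open and closed in $Hom(K,P\UU(\HH))$ and $Hom_{st}/P\UU(\HH)$ is discrete. Hence $x\mapsto[c_x]$ has open-closed preimage of $Hom_{st}/P\UU(\HH)$; this preimage contains every object, and since an open-closed set is a union of connected components while every component of $|\CC_K|$ contains an object, it is all of $|\CC_K|$. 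Therefore every $c_v$ is stable and, by discreteness, $v\mapsto[c_v]$ is constant on the connected chart $V$.

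It remains to make the action constant. On $V$ all $c_v$ lie in one orbit $P\UU(\HH)/P\UU(\HH)_{f_x}$ for a fixed stable $f_x$, so---exactly as in the proof that the bundle (\ref{universal K bundle}) is $K$-equivariant stable---I would lift $v\mapsto[c_v]$ through the principal bundle $P\UU(\HH)_{f_x}\to P\UU(\HH)\to P\UU(\HH)/P\UU(\HH)_{f_x}$ to $\sigma\colon V\to P\UU(\HH)$ with $c_v=\sigma(v)^{-1}f_x\,\sigma(v)$ (possible since $V$ is contractible), and absorb $\sigma$ into the trivialization; in the new chart $K$ acts by $g\cdot(F,v)=(f_x(g)F,v)$ through the single stable homomorphism $f_x$, which is precisely the required local model, completing the verification. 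The main obstacle is the stability step: showing the fibrewise homomorphism $c_v$ is stable at \emph{every} point of the realization and not only over the objects, which rests on $Hom_{st}(K,P\UU(\HH))$ being a union of connected components of $Hom(K,P\UU(\HH))$ together with the continuity of $x\mapsto[c_x]$; the commuting-action and trivial-base computations and the contractible-chart lifting are then routine. Conceptually this is forced, since under the identification $(f,F)\leftrightarrow(F,F^{-1}fF)$ both the right $P\UU(\HH)$-action and the left $K$-action (\ref{left K action}) of the bundle (\ref{universal K bundle}) match those on $|\DD_K|$, so $|\DD_K|\to|\CC_K|$ is $K$-equivariantly the already-classified model.
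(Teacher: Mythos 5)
Your two global checks (that the left $K$-action commutes with the right $P\UU(\HH)$-action, and that $\gamma$ is $K$-invariant, so the induced action on the base is trivial) are correct and coincide with the paper's ``simple calculation'', and your closing identification of ${\rm{Obj}}(\DD_K)$ with the total space of the bundle \eqref{universal K bundle} is also right. The genuine gap is in the step you yourself single out as the substance: propagating stability of the fibrewise homomorphisms $c_v$ from the vertices to all of $|\CC_K|$. You rest this on the assertion that stability is a locally constant condition, so that $Hom_{st}(K,P\UU(\HH))$ is open and closed in $Hom(K,P\UU(\HH))$ and $Hom_{st}/P\UU(\HH)$ is discrete. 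In the topology the paper actually uses --- $P\UU(\HH)$ is a quotient of $\UU(\HH)_{c.g.}$, in which convergence of a sequence of unitaries amounts to strong convergence of the operators and of their adjoints --- this assertion is false. Take $K=\integer/2$ and an orthonormal basis $\{e_i\}_{i\geq 1}\cup\{f_i\}_{i\geq 1}$ of $\HH$; since every $S^1$-central extension of $\integer/2$ splits, stability of an involution in $P\UU(\HH)$ is exactly the condition that both eigenspaces of an order-two lift be infinite dimensional. The non-stable involutions $U_n$ (equal to $-1$ on ${\rm span}(f_1,\dots,f_n)$ and $+1$ elsewhere) converge strongly to the stable involution equal to $-1$ on $\overline{{\rm span}}(f_i : i\geq 1)$, so $Hom_{st}$ is not open in $Hom$; and the stable involutions $V_n$ (equal to $-1$ on $\overline{{\rm span}}(f_i : i\geq n)$ and $+1$ elsewhere) converge strongly to the identity, which is not stable, so $Hom_{st}$ is not closed either. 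Isotypic multiplicities are simply not locally constant in the compact-open topology --- that is precisely the pathology distinguishing it from the norm topology --- so your clopen-preimage argument, and with it the conclusion that every $c_v$ is stable and that $v\mapsto[c_v]$ is locally constant, collapses.

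The lemma is nevertheless true, but stability at non-vertex points must be read off from the construction of $|\DD_K|$ rather than from an arbitrary chart plus continuity, and this is how the paper proceeds. The local trivializations of $|\gamma|$ furnished by Segal's machinery are assembled from the categorical data: over the star of a vertex labelled by an object $f$ of $\CC_K$, the fibre coordinate is the $P\UU(\HH)$-coordinate of the lifted chain, and in that chart the $K$-action is literally $F\mapsto f(g)F$, left multiplication by the fixed stable homomorphism $f$; stability holds by construction, with no limiting argument. This is why the paper can simply record the action on $\DD_K$, observe that it commutes with the right action and is trivial on $\CC_K$, and conclude, the local analysis being the one already carried out in detail for the bundle \eqref{projective unitary bundle K compact}. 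Note also that where the paper does need an orbit-structure fact (transitivity of $P\UU(\HH)$ on connected components of $Hom_{st}$, used to absorb a section into the trivialization --- a step you reuse correctly at the end), it stays entirely inside $Hom_{st}$: it never has to compare stable homomorphisms with nearby arbitrary ones, which is the comparison your argument cannot avoid and, as the example shows, cannot make.
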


Now let us do the main construction of this section
\begin{theorem} \label{theorem classifying map for trivial K spaces}
Let $P\UU(\HH) \to Q \to Y$ be a projective unitary stable
$K$-equivariant bundle over the trivial $K$-space $Y$, then there
is a map $\alpha:Y \to |\CC_K|$ such that $\alpha^*|\DD_K| \cong
Q$ as projective unitary stable $K$-equivariant bundles.
\end{theorem}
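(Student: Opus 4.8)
The plan is to recognise that, over a trivial $K$-space, a projective unitary stable $K$-equivariant structure on a principal bundle is nothing more than a $P\UU(\HH)$-equivariant map into $Hom_{st}(K,P\UU(\HH))$, and then to read off the classifying map from the universal model (\ref{universal K bundle}). First I would unravel the left $K$-action on $Q$. Since it commutes with the free and fibrewise-transitive right $P\UU(\HH)$-action and covers the identity of $Y$, for each $g\in K$ and $q\in Q$ there is a unique $\theta(g,q)\in P\UU(\HH)$ with $g\cdot q=q\cdot\theta(g,q)$. The compatibility of the two actions forces $\theta(gh,q)=\theta(g,q)\theta(h,q)$ and $\theta(g,qF)=F^{-1}\theta(g,q)F$, so that for fixed $q$ the assignment $s(q):=(g\mapsto\theta(g,q))$ is a homomorphism $K\to P\UU(\HH)$ with $s(qF)=F^{-1}s(q)F$. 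The local triviality clause of Definition \ref{def projective unitary G-equivariant stable bundle} identifies $s(q)$ with a conjugate of the fixed stable homomorphism $f_x$, so $s(q)$ is stable and $s$ is continuous into the compact-open topology (its adjoint is the action map $K\times Q\to P\UU(\HH)$). Thus the stable $K$-structure on $Q$ is exactly a $P\UU(\HH)$-equivariant map $s\colon Q\to Hom_{st}(K,P\UU(\HH))$, the equivariance being for the conjugation action (\ref{right PU(H) action}).

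Next I would produce the classifying map. As $Q\to Y$ is a numerable principal $P\UU(\HH)$-bundle it admits a $P\UU(\HH)$-equivariant classifying map $\widetilde\beta\colon Q\to EP\UU(\HH)$. Pairing it with $s$ gives a $P\UU(\HH)$-equivariant map
$$(\widetilde\beta,s)\colon Q\To EP\UU(\HH)\times Hom_{st}(K,P\UU(\HH)),$$
which descends to $\beta\colon Y\to EP\UU(\HH)\times_{P\UU(\HH)}Hom_{st}(K,P\UU(\HH))$ on quotients and, being $P\UU(\HH)$-equivariant over $\beta$, exhibits an isomorphism of principal bundles $Q\cong\beta^{*}\bigl(EP\UU(\HH)\times Hom_{st}(K,P\UU(\HH))\bigr)$. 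For the $K$-equivariance I would use $g\cdot q=q\cdot s(q)(g)$ together with the equivariance of $\widetilde\beta$ and $s$ to compute
$$(\widetilde\beta,s)(g\cdot q)=\bigl(\widetilde\beta(q)\,s(q)(g),\,s(q)(g)^{-1}s(q)\,s(q)(g)\bigr),$$
which is precisely the left $K$-action (\ref{left K action}) applied to $(\widetilde\beta(q),s(q))$. Hence $(\widetilde\beta,s)$ is $K$-equivariant, and since $K$ acts trivially on the base the displayed isomorphism $Q\cong\beta^{*}(\cdots)$ respects the $K$-actions.

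Finally, since by \cite[Section 3]{Segal} the bundle $|\DD_K|\to|\CC_K|$ is homotopy equivalent, as a $K$- and $P\UU(\HH)$-equivariant principal bundle, to the universal bundle (\ref{universal K bundle}), this comparison is realised by an equivariant bundle map covering some $\bar\psi\colon EP\UU(\HH)\times_{P\UU(\HH)}Hom_{st}(K,P\UU(\HH))\to|\CC_K|$, whence $EP\UU(\HH)\times Hom_{st}(K,P\UU(\HH))\cong\bar\psi^{*}|\DD_K|$ equivariantly. Setting $\alpha:=\bar\psi\circ\beta$ then yields $\alpha^{*}|\DD_K|\cong\beta^{*}\bigl(EP\UU(\HH)\times Hom_{st}(K,P\UU(\HH))\bigr)\cong Q$ as projective unitary stable $K$-equivariant bundles. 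Concretely, over a good cover $\{U_i\}$ trivialising $Q$ the map $\alpha$ is the $\CC_K$-cocycle whose objects are the local stable homomorphisms $f_i$ and whose morphisms are the transition functions $g_{ij}$ (viewed as arrows $(f_i,g_{ij})\colon f_i\to f_j$), assembled through a partition of unity as in Segal's construction, so one may alternatively build $\alpha$ directly into $|\CC_K|$. The step I expect to be the main obstacle is exactly this last matching of the two models $|\CC_K|$ and $EP\UU(\HH)\times_{P\UU(\HH)}Hom_{st}(K,P\UU(\HH))$: one must ensure the homotopy equivalence of \cite[Section 3]{Segal} can be promoted to a $K$-equivariant bundle map, so that the honest pullback \emph{isomorphism} type of $Q$—and not merely its homotopy type—is recovered.
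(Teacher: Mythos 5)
Your first two steps are correct and cleanly executed: unravelling the $K$-action into a conjugation-equivariant map $s\colon Q\to Hom_{st}(K,P\UU(\HH))$, pairing it with a classifying map $\widetilde\beta$, and checking that $(\widetilde\beta,s)$ intertwines the actions (\ref{right PU(H) action}) and (\ref{left K action}) — all of this is right, and it gives a genuinely different (and quite clean) route to a classifying map into the homotopy-quotient model (\ref{universal K bundle}). The genuine gap is exactly the step you flag at the end: passing from (\ref{universal K bundle}) to $|\gamma|\colon|\DD_K|\to|\CC_K|$. The paper's earlier assertion, citing \cite[Section 3]{Segal}, is only that the two bundles are homotopy equivalent; it hands you neither a map in the direction you need (from the homotopy quotient to $|\CC_K|$) nor its $K$-equivariance as a bundle map, and a homotopy equivalence of total spaces does not transport a pullback \emph{isomorphism}. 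Worse, in the paper the $K$-equivariant bundle map from (\ref{universal K bundle}) to $|\gamma|$ is produced in the corollary \emph{after} this theorem, precisely by applying the theorem to the bundle (\ref{universal K bundle}) itself; so invoking that comparison inside the proof is circular unless you construct it independently.

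The gap is fixable in two ways, and it is instructive to compare them. One fix is to prove the model comparison directly: the change of coordinates $(f,F)\mapsto(F^{-1}fF,F)$ on objects of $\DD_K$ sends the morphism $(f,F,G)$ to an arrow $(h,F)\to(h,G)$ with $h=F^{-1}fF$ fixed, identifying $\DD_K$ with the product of the space $Hom_{st}(K,P\UU(\HH))$ and the pair groupoid $P\UU(\HH)\times P\UU(\HH)$; with the paper's model $EP\UU(\HH)=|P\UU(\HH)\times P\UU(\HH)|$ this yields an honest $K$- and $P\UU(\HH)$-equivariant isomorphism between $|\gamma|$ and (\ref{universal K bundle}), after which your $\alpha:=\bar\psi\circ\beta$ argument closes. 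The other fix is the paper's own proof, which avoids the detour entirely: trivialize $Q$ over a cover $\{U_i\}$, record the cocycle data $(f_i,\rho_{ij})$, feed it into Segal's cover categories $\YY$ and $\QQ$ with functors $\phi\colon\YY\to\CC_K$ and $\Phi\colon\QQ\to\DD_K$, and use a partition of unity to obtain homotopy equivalences $\theta\colon Y\to|\YY|$ and a $K$-equivariant bundle map $\Theta\colon Q\to|\QQ|$, setting $\alpha=|\phi|\circ\theta$. Your closing sentence sketches exactly this construction, but as a one-sentence alternative it does not close the hole in your main line of argument; as written, the proof is incomplete at its crucial step.
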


\begin{proof}

Choose a cover $\{U_i\}_{i \in I}$ of $Y$ where $Q$ is trivialized, i.e.
$$\phi_i : Q|_{U_i} \stackrel{\cong}{\to} U_i \times P\UU(\HH)$$
and the $K$ action on the right hand side comes form a stable
homomorphism $f_i : K \to P\UU(\HH)$ satisfying the equation
$\phi_i(g \cdot q) = (x, f_i(g)F)$ where $\phi(q) = (x,F)$ and $g
\in K$. Define the transition functions $\rho_{ji} : U_i \cap U_j
\to P\UU(\HH)$ through the equations $$\left( \phi_j|_{U_i \cap
U_j} \circ \left( \phi_i|_{U_i \cap U_j} \right)^{-1} \right)(x,F)
= (x, \rho_{ji}(x)F),$$ and note that we have the compatibility
conditions
\begin{eqnarray} \label{compatibility of functions}
\rho_{kj} \circ \rho_{ji} = \rho_{ki},  \ \ \ \ \ \  \rho_{ij}\circ \rho_{ji}= \rho_{ii}=1, \ \ {\rm{and}} \ \  \nonumber
\rho_{ji}(x)^{-1} f_i(g) \rho_{ij}(x) = f_j(g) \end{eqnarray}
for all $x \in U_i \cap U_j$ and $g \in K$.

Now define the categories $\YY, \QQ$ associated to the open cover \cite{Segal} whose objects
 and morphism are respectively
$${\rm{Obj}}(\YY) := \{(y,i) \in Y \times I | y \in U_i \} \cong \bigsqcup_{i \in I} U_i$$
$${\rm{Mor}}(\YY): = \{(y,i,j) \in Y \times I \times I | y \in U_i \cap U_j \} \cong \bigsqcup_{(i,j) \in I^2} U_i \cap U_i$$
$${\rm{Obj}}(\QQ) := \{(y,i,F) \in Y \times I \times P\UU(\HH)| y \in U_i \} \cong \bigsqcup_{i \in I} U_i \times P\UU(\HH)$$
$${\rm{Mor}}(\QQ): = \{(y,i,j,F) \in Y \times I \times I \times P\UU(\HH) | y \in U_i \cap U_j \} \cong \bigsqcup_{(i,j) \in I^2} U_i \cap U_i\times P\UU(\HH)$$
and whose structural maps for $\YY$ are
$$\source(y,i,j)=(y,i), \ \ \ \target(y,i,j)=(y,j), \ \ \ \inverse(y,i,j)=(y,j,i),$$
$$\composition((y,i,j),(y,j,k))=(y,i,k) \ \ \ {\rm{and}} \ \ \identity(y,i) = (y,i,i),$$
and for $\QQ$ are
$$\source(y,i,j,F)=(y,i,F), \ \ \ \target(y,i,j,F)=(y,j,\rho_{ji}(y)F),$$
$$ \inverse(y,i,j,F)=(y,j,i,\rho_{ji}(y)F), \ \ \identity(y,i,F) = (y,i,i,F)$$
$${\rm{and}} \ \ \ \composition((y,i,j,F),(y,j,k,\rho_{ji}(y)F))=(y,i,k,F). $$

The forgetful functor $\beta:\QQ \to \YY$, $(y,i,j,F) \mapsto (y,i,j)$ induces a map at the
level of the geometric realizations that makes the map
$|\QQ| \stackrel{\beta}{\to} |\YY|$ into a projective unitary bundle.

Now define the functors $\Phi: \QQ \to \DD_K$ and $\phi: \YY \to
\CC_K$  that at the level of morphisms are respectively
$$\Phi(y,i,j,F) := (f_i, F, \rho_{ji}(y)F),  \ \ \ \phi(y,i,j) := (f_i, \rho_{ij}(y))$$
(the equations in (\ref{compatibility of functions}) imply that $\Phi$ and $\phi$ are indeed functors),
which at the level of morphisms make the following diagram commute
$$\xymatrix{
\QQ \ar[d]^\beta \ar[r]^\Phi & \DD_K \ar[d]^\gamma & (y,i,j,F) \ar@{|->}[d]^\beta \ar@{|->}[r]^\Phi & (f_i,F,\rho_{ji}(y)F) \ar@{|->}[d]^\gamma \\
\YY \ar[r]^\phi & \CC_K & (y,i,j) \ar@{|->}[r]^\phi & (f_i,
\rho_{ij}(y)). }$$

Endowing the category $\QQ$ with the left $K$-action
$$K \times \QQ \to \QQ,  \ \ (g,(y,i,j,F)) \mapsto (y,i,j,f_i(g)F)$$
we see that the functor $\Phi$ is $K$-equivariant. Therefore we get the at the level of the geometric realizations
$$\xymatrix{P\UU(\HH) \ar[d] & P\UU(\HH) \ar[d] \\
|\QQ| \ar[d]^{|\beta|} \ar[r]^{|\Phi|} & |\DD_K| \ar[d]^{|\gamma|}   \\
|\YY| \ar[r]^{|\phi|} & |\CC_K| . }$$ we obtain a map of
projective unitary stable $K$-equivariant bundles.

Following the procedure defined in \cite[Section 4]{Segal}, from a partition of unity subordinated to the open cover
$\{U_i\}_{\{i \in I\}}$ one can construct maps $\Theta, \theta$
$$\xymatrix{
Q \ar[r]^{\Theta} \ar[d] & |\QQ| \ar[d]^{|\beta|}   \\
Y \ar[r]^{\theta} & |\YY| .
}$$
such that $\Theta$ is a map of projective unitary stable $K$-equivariant bundles, and moreover
 such that $\Theta$ and $\theta$
are homotopy equivalences.

We have then constructed a map of projective unitary stable $K$-equivariant bundles
$$\xymatrix{
Q \ar[r]^{|\Phi| \circ \Theta} \ar[d] & |\DD_K| \ar[d]^{|\gamma|}   \\
Y \ar[r]^{|\phi| \circ \theta} & |\CC_K| . }$$ which by taking
$\alpha:=|\phi| \circ \theta$ implies the proposition.

\end{proof}

\begin{cor}
Let $Y$ be a trivial $K$-space, then $$\Bun_{st}^K(Y, P\UU(\HH))
=[Y , EP\UU(\HH) \times_{P\UU(\HH)} Hom_{st}(K,P\UU(\HH))].$$
\end{cor}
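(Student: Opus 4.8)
The plan is to show that pullback of the universal bundle $|\DD_K|\to|\CC_K|$ along maps $Y\to|\CC_K|$ sets up the desired bijection, and then to transport the conclusion to the stated base via the homotopy equivalence already at hand. Recall that the paragraph preceding Theorem \ref{theorem classifying map for trivial K spaces} identifies $|\DD_K|\to|\CC_K|$, as a projective unitary stable $K$-equivariant bundle over the trivial $K$-space $|\CC_K|$, with the bundle \eqref{universal K bundle}; in particular $|\CC_K|$ is homotopy equivalent to $EP\UU(\HH)\times_{P\UU(\HH)}Hom_{st}(K,P\UU(\HH))$, so that $[Y,|\CC_K|]=[Y,EP\UU(\HH)\times_{P\UU(\HH)}Hom_{st}(K,P\UU(\HH))]$ and it suffices to produce a bijection $[Y,|\CC_K|]\cong\Bun_{st}^K(Y,P\UU(\HH))$. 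First I would define the pullback map $[\alpha]\mapsto[\alpha^{*}|\DD_K|]$. Because $K$ acts trivially on both $Y$ and $|\CC_K|$, the left $K$-action on $|\DD_K|$ pulls back to a left $K$-action on $\alpha^{*}|\DD_K|$ that commutes with the right $P\UU(\HH)$-action and covers the trivial action on $Y$; and since $|\DD_K|\to|\CC_K|$ meets the local requirement of Definition \ref{def projective unitary G-equivariant stable bundle} with local stable homomorphisms, the pulled-back local trivializations exhibit $K$ acting fibrewise through those same stable homomorphisms. Hence $\alpha^{*}|\DD_K|$ is again a projective unitary stable $K$-equivariant bundle, and homotopy invariance of pullbacks of principal bundles (the component of $Hom_{st}(K,P\UU(\HH))$, and thus the local homomorphism, being locally constant along a homotopy) shows that the class $[\alpha^{*}|\DD_K|]$ depends only on the homotopy class of $\alpha$.

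The surjectivity of this map is precisely Theorem \ref{theorem classifying map for trivial K spaces}: for every $Q\in\Bun_{st}^K(Y,P\UU(\HH))$ there is a classifying map $\alpha_Q\colon Y\to|\CC_K|$ with $\alpha_Q^{*}|\DD_K|\cong Q$. Writing $\mathrm{cl}\colon\Bun_{st}^K(Y,P\UU(\HH))\to[Y,|\CC_K|]$ for the assignment $[Q]\mapsto[\alpha_Q]$ and $\mathrm{pb}$ for the pullback map, this reads $\mathrm{pb}\circ\mathrm{cl}=\mathrm{id}$. To conclude that $\mathrm{pb}$ is a bijection it remains to see that $\mathrm{cl}$ is well defined (independent of the cover and of the subordinate partition of unity used in the proof of Theorem \ref{theorem classifying map for trivial K spaces}) and that $\mathrm{cl}\circ\mathrm{pb}=\mathrm{id}$, i.e. that $\alpha_{\alpha^{*}|\DD_K|}\simeq\alpha$ for every $\alpha$.

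The hard part will be exactly these two canonicity statements, which together express the uniqueness up to homotopy of the classifying map. I would derive both from Segal's classification of bundles over a topological category \cite[Sections 3, 4]{Segal}: the maps $\Theta,\theta$ constructed there from a partition of unity are canonical up to homotopy, so the induced $|\phi|\circ\theta$ depends, up to homotopy, only on the isomorphism class of the bundle $Q$ and not on the auxiliary choices, which gives that $\mathrm{cl}$ is well defined; and applying the same construction to the tautological atlas of $\alpha^{*}|\DD_K|$ recovers $\alpha$ up to homotopy, which gives $\mathrm{cl}\circ\mathrm{pb}=\mathrm{id}$. Granting this, injectivity of $\mathrm{pb}$ is immediate: if $\alpha_0^{*}|\DD_K|\cong\alpha_1^{*}|\DD_K|$ then $\alpha_0\simeq\mathrm{cl}(\alpha_0^{*}|\DD_K|)=\mathrm{cl}(\alpha_1^{*}|\DD_K|)\simeq\alpha_1$, where the outer equivalences use $\mathrm{cl}\circ\mathrm{pb}=\mathrm{id}$ and the middle equality uses that $\mathrm{cl}$ depends only on the isomorphism class. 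Combining $\mathrm{pb}\circ\mathrm{cl}=\mathrm{id}$ and $\mathrm{cl}\circ\mathrm{pb}=\mathrm{id}$, the pullback map is the desired bijection, and transporting along the homotopy equivalence $|\CC_K|\simeq EP\UU(\HH)\times_{P\UU(\HH)}Hom_{st}(K,P\UU(\HH))$ yields the corollary.
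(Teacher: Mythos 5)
Your proposal is correct and follows essentially the same route as the paper: establish $\Bun_{st}^K(Y,P\UU(\HH))=[Y,|\CC_K|]$ by combining Theorem \ref{theorem classifying map for trivial K spaces} with Segal's standard classification arguments, then transport the result along the homotopy equivalence between $|\CC_K|$ and $EP\UU(\HH)\times_{P\UU(\HH)}Hom_{st}(K,P\UU(\HH))$. The only cosmetic difference is that you quote that homotopy equivalence from the discussion preceding the theorem, whereas the paper re-applies the theorem to the bundle \eqref{universal K bundle} to exhibit an explicit bundle map realizing it; your unpacking of the pullback/classifying-map bijection is just a more detailed version of the ``standard arguments'' the paper cites.
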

\begin{proof}

Standard arguments in the classification of principal bundles (see
\cite[Section 3]{Segal}, \cite{Milnor1, Milnor2}), together with
Theorem \ref{theorem classifying map for trivial K spaces} implies
that
$$\Bun_{st}^K(Y, P\UU(\HH)) =[Y , |\CC_K|].$$

Now, using Theorem \ref{theorem classifying map for trivial K
spaces} once again for the bundle in (\ref{universal K bundle}) we
get a map of projective unitary stable $K$-equivariant bundles
$$\xymatrix{
EP\UU(\HH) \times Hom_{st}(K,P\UU(\HH)) \ar[r] \ar[d] & |\DD_K| \ar[d]^{|\gamma|}   \\
EP\UU(\HH) \times_{P\UU(\HH)} Hom_{st}(K,P\UU(\HH)) \ar[r] &
|\CC_K| . }$$ which at the level of the horizontal maps are
homotopy equivalences \cite{Segal}. Therefore we could take the
bundle on the left hand side of the previous diagram as our
universal projective unitary stable $K$-equivariant bundle for
trivial $K$ spaces. This finishes the proof.
\end{proof}

\subsection{Gluing of local objects}

We have seen in Section \ref{subsection system of fix points} that
there is a standard procedure in order to construct a $G$-space
whose $K$-fixed point set has a prescribed homotopy type. For the
purpose of this section we would need to construct a contravariant
functor from the category $\OO_G^P$ of canonical orbits to the
category of topological spaces, which at each orbit type $G/K$
specifies a space which classifies the isomorphism classes of
projective unitary $N(K)$ equivariant bundles over trivial
$K$-spaces endowed with a $N(K)/K$ actions

The first choice that comes to one's mind is to make the
assignment
$$G/K \mapsto EP\UU(\HH) \times_{P\UU(\HH)}
Hom_{st}(K,P\UU(\HH)),$$ but unfortunately this assignment fails
to provide a contravariant functor from $\OO_G^P$ to topological
spaces. One way to fix the problem is to choose spaces with the
same homotopy type for each $G/K$, with the extra property that
the functoriality of the category $\OO_G^P$ is encoded in the
selection. Let us see how this selection can be carried out.

Consider the action groupoid $G \ltimes G/K$ associated to the
left action of $G$ on $G/K$, i.e. the space of objects is $G/K$
and the space of morphisms is $G \times G/K$ with $\source(g,h[K])
= h[K] $ and $\target(g,h[K]) = gh[K]$.

 Note that the functor
$$ K \to G\ltimes G/K, \ \ \ k \mapsto (k, [K])$$
from the category defined by the group $K$ to $G \ltimes G/K$ is
an equivalence of categories (Morita equivalence of groupoids as
in \cite{Moerdijk-Madison}).

One would expect that this equivalence of categories would induce
another equivalence of categories obtained by the restriction
functor, from the category
$$Funct_{st}(G\ltimes G/K, P\UU(\HH))\rtimes Maps(G/K, P\UU(\HH))$$ whose objects are stable functors from
$G\ltimes G/K$ to $P\UU(\HH)$ 
\begin{align*} Funct_{st}(G\ltimes G/K,
P\UU(\HH))  := &\\
 \{F \in Funct(G\ltimes G/K, & P\UU(\HH)) \colon F|_K \in Hom_{st}(K,
P\UU(\HH))\},
\end{align*}
and whose morphisms are defined by natural
transformations, 
to the category $$ Hom_{st}(K, P\UU(\HH)) \rtimes P\UU(\HH).$$
Unfortunately again, this is not true in general for topological
groups (see Section \ref{remark equivalence of categories}), but
it is the case when the group $G$ is discrete (see Proposition
\ref{proposition equivalence of categories} ), let us see why.

\vspace{0.5cm}

Consider the projective unitary $G$-equivariant bundle
$$P \stackrel{p}{\to} G/K$$
and let us suppose that there is a trivialization of $P$ $$\phi :
P \stackrel{\cong}{\to} P\UU(\HH) \times G/K$$ as a unitary
projective bundle. Note that such trivialization always exists in
the case that the group $G$ is discrete.

Associated to the trivialization $\phi$ define the map $\psi: G
\times G/K \to P\UU(\HH)$ by
\begin{eqnarray}\label{functor psi}\psi(g,k[H]):= \pi_1(\phi(gx))
\pi_1(\phi(x))^{-1}\end{eqnarray} where $\pi_1$ is the projection
on the first coordinate, and $x$ is any point in $P$ such that
$p(x)=k[K]$.

\begin{lemma}
The map $\psi$ is a functor from $G\ltimes G/K$ to $P\UU(\HH)$.
\end{lemma}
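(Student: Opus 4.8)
The plan is to verify directly that $\psi$ respects the groupoid structure of $G \ltimes G/K$, namely that it preserves identities and composition of morphisms, which is exactly what it means to be a functor to the one-object category $P\UU(\HH)$. The two structural facts I must check are $\psi(1, k[K]) = 1$ (identities go to identities) and $\psi(g'g, k[K]) = \psi(g', gk[K])\,\psi(g, k[K])$ (composition is preserved), keeping in mind that a morphism $(g, k[K])$ has source $k[K]$ and target $gk[K]$, so composable pairs are of the form $(g', gk[K])$ following $(g, k[K])$.

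The first step is to confirm that the defining formula \eqref{functor psi} is independent of the choice of point $x \in P$ lying over $k[K]$. Since $P$ is a principal $P\UU(\HH)$-bundle, any two such points differ by the right action of an element $F \in P\UU(\HH)$, and the trivialization $\phi$ is $P\UU(\HH)$-equivariant, so $\pi_1(\phi(xF)) = \pi_1(\phi(x))F$. I would observe that in the expression $\pi_1(\phi(gx))\,\pi_1(\phi(x))^{-1}$ the factor $F$ on the right and the corresponding factor $F$ coming from $gx$ (using that the left $G$-action commutes with the right $P\UU(\HH)$-action, as guaranteed by Definition \ref{def projective unitary G-equivariant stable bundle}) cancel, leaving the value unchanged. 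Establishing this well-definedness is the essential preliminary, since both functoriality identities then become unambiguous statements.

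With well-definedness in hand, the identity axiom is immediate: taking $g = 1$ gives $\psi(1, k[K]) = \pi_1(\phi(x))\,\pi_1(\phi(x))^{-1} = 1$. For the composition axiom I would unwind the definitions carefully. Fix $x$ with $p(x) = k[K]$; then $gx$ lies over $gk[K]$ and can serve as the representative point for the morphism $(g', gk[K])$. Writing $\psi(g', gk[K]) = \pi_1(\phi(g'gx))\,\pi_1(\phi(gx))^{-1}$ and $\psi(g, k[K]) = \pi_1(\phi(gx))\,\pi_1(\phi(x))^{-1}$, the product telescopes to $\pi_1(\phi(g'gx))\,\pi_1(\phi(x))^{-1} = \psi(g'g, k[K])$, which is precisely the required identity.

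The main obstacle is really the well-definedness check rather than the algebra of composition, because everything downstream is a one-line telescoping cancellation once the value of $\psi$ is known to be canonical. The subtlety in well-definedness lies in correctly tracking how the left $G$-action and right $P\UU(\HH)$-action interact through $\phi$: I must use both the $P\UU(\HH)$-equivariance of the trivialization and the commuting of the two actions to see the ambiguity $F$ cancel. I would therefore present the proof as a short lemma on independence of $x$, followed by the two axiom verifications, emphasizing that the telescoping structure of the formula is what makes functoriality transparent.
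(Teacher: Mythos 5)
Your proposal is correct and follows essentially the same route as the paper's proof: well-definedness via the cancellation of the ambiguity $F$ (using that $\phi$ is a map of principal bundles and that the left $G$-action commutes with the right $P\UU(\HH)$-action), the identity axiom by direct substitution, and composition by the telescoping cancellation $\pi_1(\phi(hgx))\,\pi_1(\phi(gx))^{-1}\,\pi_1(\phi(gx))\,\pi_1(\phi(x))^{-1} = \pi_1(\phi(hgx))\,\pi_1(\phi(x))^{-1}$. Your emphasis on well-definedness as the essential step, with the rest being immediate, matches the structure of the paper's argument exactly.
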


\begin{proof}
The map $\psi$ is well defined; because if we take $y =xF$ with $F
\in P\UU(\HH)$, since $\phi$ is a map of principal bundles, we have
that $\phi(xF)= \phi(x)F$, and therefore
$$\pi_1(\phi(gy)) \pi_1(\phi(y))^{-1}=\pi_1(\phi(gx))F \left(\pi_1(\phi(x))F
\right)^{-1}=\pi_1(\phi(gx)) \pi_1(\phi(x))^{-1}.$$

By definition we have that $\psi(1,k[H])=1$; and the composition
follows from
\begin{eqnarray*}
\psi(h,gk[K]) \psi(g,k[K]) & = & \pi_1(\phi(hgx))
\pi_1(\phi(gx))^{-1} \pi_1(\phi(gx)) \pi_1(\phi(x))^{-1}\\
&= & \pi_1(\phi(hgx)) \pi_1(\phi(x))^{-1} \\
&=& \psi(hg,k[K]).
\end{eqnarray*}
\end{proof}

If we have two different trivializations $\phi_1,\phi_2: P \cong
P\UU(\HH) \times G/K$, they are related via a gauge transformation
$\sigma: G/K \to P\UU(\HH)$ by the equation
$$ \sigma(p(x))= \pi_1(\phi_1(x))  \pi_1(\phi_2(x))^{-1}.$$

Therefore the associated functors $\psi_1$ and $\psi_2$ are
related by the natural transformation $\sigma$ in the following
way
$$ \sigma(gk[K])^{-1} \psi_1(g,k[K]) \sigma(k[K]) =
\psi_2(g,k[K])$$ which can be written as $\sigma^{-1} \psi_1
\sigma= \psi_2$.

If we denote the category $$\widetilde{\CC}_{G/K}: =
Funct_{st}(G\ltimes G/K, P\UU(\HH))\rtimes Maps(G/K, P\UU(\HH))$$
and recalling the category $\CC_K$ from \ref{section universal for
trivial K spaces} we can conclude

\begin{proposition} \label{proposition equivalence of categories}
Let us suppose that for any local object $P
\to G/K$ there exists a trivialization $P \cong P\UU(\HH) \times
G/K$ as principal bundles, then the restriction functor
$R:\widetilde{\CC}_{G/K} \to \CC_K$ which to any functor $\psi$
assigns the homomorphism $R(\psi)(k):= \psi(k,[K])$, is an
equivalence of categories.
\end{proposition}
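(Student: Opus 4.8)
The plan is to show that $R$ is both essentially surjective and fully faithful. Conceptually $R$ is the restriction functor along the Morita equivalence $K \to G\ltimes G/K$, $k \mapsto (k,[K])$, and the reason one expects it to be an equivalence is that precomposition with an equivalence of groupoids induces an equivalence of the associated functor categories; the real content of the proposition is that, under the trivialization hypothesis, this abstract principle survives the restriction to \emph{stable} continuous functors and continuous natural transformations. First I would pin down how $R$ acts on morphisms: a natural transformation $\sigma\colon \psi_1 \to \psi_2$ is a continuous map $\sigma\colon G/K \to P\UU(\HH)$, and I set $R(\sigma):=\sigma([K])$. Evaluating the naturality condition on the morphism $(k,[K])\colon [K]\to k[K]=[K]$ (using $k[K]=[K]$ for $k\in K$) shows at once that $\sigma([K])^{-1}\,R(\psi_1)\,\sigma([K]) = R(\psi_2)$, so $R(\sigma)$ is a genuine morphism of $\CC_K$ and $R$ is a well-defined functor.

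Next I would establish essential surjectivity. Given a stable homomorphism $f\colon K\to P\UU(\HH)$, I would form the associated local object $P_f := P\UU(\HH)\times_K G \to G/K$, where $K$ acts by $k\cdot(F,g)=(f(k)F,\,gk^{-1})$. The hypothesis furnishes a trivialization $\phi\colon P_f \cong P\UU(\HH)\times G/K$ of principal bundles, and the lemma immediately preceding this proposition converts $\phi$ into a continuous functor $\psi\colon G\ltimes G/K \to P\UU(\HH)$. Restricting $\psi$ to $K$ recovers the $K$-action on the fibre of $P_f$ over $[K]$ read through the trivialization, which is $f$ up to the conjugation introduced by the choice of $\phi$; hence $R(\psi)$ is conjugate to $f$, in particular stable, so $\psi\in\widetilde{\CC}_{G/K}$ and its image realizes the isomorphism class of $f$.

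Finally I would treat full faithfulness, both halves of which are driven by the transitivity of $G$ on $G/K$ together with functoriality. Fixing $\psi_1,\psi_2$ and writing $f_i:=R(\psi_i)$, the naturality square for $(g,[K])\colon[K]\to g[K]$ forces
$$\sigma(g[K]) = \psi_1(g,[K])\,\sigma([K])\,\psi_2(g,[K])^{-1}$$
for any natural transformation $\sigma\colon\psi_1\to\psi_2$; since every point of $G/K$ has the form $g[K]$, this shows $\sigma$ is determined by $\sigma([K])$, yielding faithfulness immediately. For fullness, starting from $F\in P\UU(\HH)$ with $F^{-1}f_1F=f_2$, I would \emph{define} $\sigma$ by the displayed formula with $\sigma([K]):=F$ and then check three points: that it is well defined on $G/K$ (if $g'=gk$ with $k\in K$ then $\psi_i(g',[K])=\psi_i(g,[K])f_i(k)$ by functoriality, and the intertwining relation $f_1(k)F=Ff_2(k)$ forces the two values to coincide), that it obeys the naturality condition for an arbitrary morphism $(h,g[K])$ (a direct computation using $\psi_i(hg,[K])=\psi_i(h,g[K])\psi_i(g,[K])$), and that it is continuous (automatic for discrete $G$, and in general obtained from a local section of $G\to G/K$).

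The step I expect to be the main obstacle is essential surjectivity: this is exactly where the trivialization hypothesis is indispensable, and it is the point at which the statement fails for general topological groups but goes through when $G$ is discrete, since then every local object trivializes. By contrast, the verifications of full faithfulness are forced by transitivity and functoriality and amount to routine bookkeeping.
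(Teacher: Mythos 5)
Your proof is correct and follows essentially the same route as the paper: essential surjectivity via the associated bundle $P\UU(\HH)\times_f G$, the hypothesized trivialization, and the induced functor $\psi$ of equation \eqref{functor psi}; full faithfulness via the formula $\sigma(g[K]) = \psi_1(g,[K])\,\sigma([K])\,\psi_2(g,[K])^{-1}$ showing a natural transformation is determined by its value at $[K]$. The only difference is that you explicitly verify what the paper leaves implicit in the fullness direction (well-definedness over coset representatives, the naturality check, continuity), which strengthens rather than changes the argument.
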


\begin{proof}
By definition of $\widetilde{\CC}_{G/K}$, any stable functor
restricts to a stable homomorphism. Let us see that the functor is
essentially surjective.

For $f : K \to P\UU(\HH)$ a stable homomorphism, consider
$$P= P\UU(\HH) \times_f G := P\UU(\HH) \times_K G $$ where $K$ acts on $P\UU(\HH)$ via
the homomorphism $f$. By hypothesis $P$ is trivializable via $\phi
: P \stackrel{\cong}{\to} P\UU(\HH) \times G/K$ and therefore the
associated functor $\psi : G \ltimes G/K \to P\UU(\HH)$ is defined
by the formula (\ref{functor psi}). The restriction $R(\psi)$ is a
homomorphism which is conjugate to $f$, and therefore the functor
$R$ is essentially surjective.

The maps on morphisms
$$Hom_{\widetilde{\CC}_{G/K}} (\psi_1, \psi_2) \to Hom_{{\CC}_{K}} (R(\psi_1),
R(\psi_2))$$ are bijective, as any natural transformation is
defined by its value in $[K]$:
$$\sigma(g[K]) = \psi_1(g,[K]) \sigma([K]) \psi_2(g,[K])^{-1}.$$
Therefore the restriction functor is moreover full and faithful,
hence it gives an equivalence of categories.
\end{proof}

\begin{cor} \label{corollary local objects trivial imply equivalence of categories}
If all local objects $P \to G/K$ trivialize as
$P\UU(\HH)$-principal bundles, then the restriction functor
induces a homotopy equivalence
$$R: |\widetilde{\CC}_{G/K}| \stackrel{\simeq}{\to} |\CC_K|.$$
\end{cor}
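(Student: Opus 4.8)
The plan is to deduce the statement from Proposition~\ref{proposition equivalence of categories} by upgrading the equivalence of categories it provides to a homotopy equivalence of geometric realizations. Under the standing hypothesis that every local object $P\to G/K$ trivializes as a $P\UU(\HH)$-principal bundle, Proposition~\ref{proposition equivalence of categories} tells us that the restriction functor $R\colon\widetilde{\CC}_{G/K}\to\CC_K$ is full, faithful and essentially surjective. All that remains is to see that such an equivalence, realized in the compactly generated setting of this paper, yields a homotopy equivalence $|R|\colon|\widetilde{\CC}_{G/K}|\to|\CC_K|$.

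The mechanism I would use is the standard fact that the geometric realization functor carries a continuous natural transformation to a homotopy. Concretely, a continuous natural transformation $F\Rightarrow F'$ between functors of topological categories is the same datum as a continuous functor $\CC\times\underline{2}\to\DD$, where $\underline{2}$ is the arrow category $\{0\to 1\}$ with $|\underline{2}|=[0,1]$; since in the compactly generated category realization commutes with finite products we have $|\CC\times\underline{2}|\cong|\CC|\times[0,1]$, and restricting to the endpoints recovers $|F|$ and $|F'|$ (compare \cite[Section~3]{Segal}). Granting a continuous quasi-inverse $S\colon\CC_K\to\widetilde{\CC}_{G/K}$ together with continuous natural isomorphisms $S\circ R\cong\mathrm{id}$ and $R\circ S\cong\mathrm{id}$, this mechanism produces homotopies $|S|\circ|R|\simeq\mathrm{id}$ and $|R|\circ|S|\simeq\mathrm{id}$, so that $|R|$ is a homotopy equivalence with inverse $|S|$. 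Equivalently, one may phrase the whole argument by checking that $R$ is an essential (Morita) equivalence of topological groupoids in the sense of \cite{Moerdijk-Madison}, which is known to induce a homotopy equivalence on classifying spaces.

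The main obstacle is therefore the topological upgrade: an abstract equivalence of categories only guarantees the quasi-inverse and the natural isomorphisms set-theoretically, so I must verify that they can be chosen continuously. For fullness and faithfulness this is immediate, since the explicit formula $\sigma(g[K])=\psi_1(g,[K])\,\sigma([K])\,\psi_2(g,[K])^{-1}$ of Proposition~\ref{proposition equivalence of categories} exhibits the bijection on morphism spaces as a homeomorphism, which is precisely the statement that the square computing morphisms is a pullback. For essential surjectivity I would use the explicit construction underlying Proposition~\ref{proposition equivalence of categories}: to a stable homomorphism $f\in Hom_{st}(K,P\UU(\HH))$ one assigns the local object $P\UU(\HH)\times_f G$ and, via formula~(\ref{functor psi}), a stable functor $\psi_f$ with $R(\psi_f)$ conjugate to $f$. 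What must be checked is that $\psi_f$ can be chosen to depend continuously on $f$, i.e.\ that the trivializations of $P\UU(\HH)\times_f G$ vary continuously; in the principal case of interest, where $G$ is discrete so that $G/K$ is discrete, a choice of coset representatives trivializes $P\UU(\HH)\times_f G$ by an explicit cocycle that is algebraic in $f$, yielding a continuous global section and hence the required continuous quasi-inverse. This is the step where the trivializability hypothesis is genuinely used, and it completes the proof.
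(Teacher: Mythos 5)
Your proposal follows the same route that the paper implicitly takes: the corollary is stated without proof precisely because it is meant to follow from Proposition~\ref{proposition equivalence of categories} together with the standard fact that geometric realization turns continuous natural transformations into homotopies (via $|\CC\times\underline{2}|\cong|\CC|\times[0,1]$ in the compactly generated setting), so an equivalence of topological categories admitting a \emph{continuous} quasi-inverse realizes to a homotopy equivalence. You are in fact more careful than the paper: you correctly flag that an equivalence of the underlying discrete categories is not enough (the identity functor from a topological group with the discrete topology to the same group with its native topology is an equivalence of underlying categories but almost never induces an equivalence of classifying spaces), so the continuity of the quasi-inverse and of the natural isomorphisms is a genuine issue. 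Your verification of topological fully-faithfulness via the formula $\sigma(g[K])=\psi_1(g,[K])\,\sigma([K])\,\psi_2(g,[K])^{-1}$ is correct, and in the discrete case your explicit quasi-inverse $f\mapsto\psi_f$ (built from coset representatives, with $R(\psi_f)=f$ on the nose and morphisms sent to constant gauge transformations) is continuous and completes the argument.

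The one gap is that your continuity argument for essential surjectivity is confined to discrete $G$, whereas the corollary is stated for a Lie group $G$ under the trivializability hypothesis --- and that hypothesis is interesting exactly when $G$ is \emph{not} discrete (for discrete $G$ it is automatic, and the paper's $SU(3)/SO(3)$ discussion shows it can fail otherwise). For non-discrete $G$, a globally continuous choice $f\mapsto\psi_f$ amounts to choosing trivializations of the bundles $P\UU(\HH)\times_f G$ continuously over all of $Hom_{st}(K,P\UU(\HH))$; the trivializations of a single such bundle form a torsor under the gauge group $Maps(G/K,P\UU(\HH))$, which is not contractible (its components are governed by $H^2(G/K;\integer)$), so a global continuous section may be obstructed. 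The repair is the alternative you mention in passing but do not carry out: check that $R$ is an essential (Morita) equivalence of topological groupoids. Topological fully-faithfulness you already have; for topological essential surjectivity one only needs \emph{local} sections, and these exist because, by Proposition~\ref{proposition pi zero of stable hom} and the discussion following it, each connected component $W_f$ of $Hom_{st}(K,P\UU(\HH))$ is a single $P\UU(\HH)$-orbit and $P\UU(\HH)\to P\UU(\HH)/P\UU(\HH)_f\cong W_f$ is a principal bundle, so homomorphisms near a given one can be conjugated continuously onto a fixed element in the essential image of $R$. With that, the Morita-equivalence route yields the equivalence of classifying spaces in the corollary's stated generality, while your explicit construction remains a clean and complete proof in the discrete case, which is the only case the paper subsequently uses.
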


\subsubsection{Triviality as principal bundles of the local objects}
 \label{remark equivalence of categories} The
triviality of the local objects as principal $P\UU(\HH)$-bundles
may be characterized in topological terms in the following form.

Let $P \to G/K$ be a projective unitary stable $G$-equivariant
bundle over $G/K$ for $K$ compact. We have seen that
$$P \cong P|_{[K]} \times_K G \cong P\UU(\HH) \times_f G$$
as projective unitary stable $G$-equivariant bundles where $K$
acts on $P\UU(\HH)$ through the stable homomorphism $f: K \to
P\UU(\HH)$.

We have already seen that the homotopy class of the map $Bf: BK
\to BP\UU(\HH)$ determines the isomorphism class of the bundle
defined by $f$, and that the isomorphism class of its associated
bundle over $G/K$ is determined by the homotopy class of a map $F
: EG \times_G G/K \to BP\UU(\HH)$ defined using a homotopy
equivalence $EG \times_G G/K \stackrel{\simeq}{\to} BK$. In this
framework we can see that the bundle $P$ is trivial after
forgetting the $G$ action, if the composition of the maps
$$ G/K \to EG \times_G G/K \to BP\UU(\HH)$$
is homotopy trivial; or in other words that the homomorphism in
third cohomology groups \begin{eqnarray} \label{homomorphism in
third cohomology} H^3_G(G/K, \integer) \to H^3(G/K,
\integer)\end{eqnarray} is trivial.

In the case that the group $G$ is discrete, the orbit types are
also discrete and therefore any bundle over $G/K$ is
trivializable. When the group $G$ is a Lie group there is no
reason to expect that the homomorphism of (\ref{homomorphism in
third cohomology}) is trivial in general. On the contrary,
applying the Eilenberg-Moore spectral sequence to the fibration
$G/K \to BK \to BG$ one gets the isomorphism \cite{Wolf}
$$H^*(G/K, \integer) \cong \Tor_{C^*(BG, \integer)}( \integer,
C^*(BK, \integer)),$$ that in principle would make one guess that
there are several cases on which the homomorphism in
(\ref{homomorphism in third cohomology}) is not trivial.

Take for example the case of $G=SU(3)$ and $K=SO(3)$ whose
quotient space is $X_{-1}=SU(3)/SO(3)$ which is known as the Wu
manifold. In this case $H_2(X_{-1}, \integer) = \integer/2$ (see
\cite{Barden}) which implies that $H^3(X_{-1}, \integer) =
\integer/2$. Applying the Serre spectral sequence to the fibration
$G/K \to EG \times_K G \to BG $ we get that the only non trivial
term of the second page at total degree three is
$$E_2^{0,3}=H^0(BSU(3), H^3(X_{-1}, \integer))= \integer/2,$$ which
moreover survives to the infinity page $E_\infty^{0,3} =
\integer/2$. This fact implies that the homomorphism in
(\ref{homomorphism in third cohomology}) is an isomorphism
$$H^3(BSO(3), \integer) \stackrel{\cong}{\to} H^3(SU(3)/SO(3),
\integer)= \integer/2.$$

From the previous discussion we see that we can improve
Proposition \ref{proposition equivalence of categories} in the
following way.

\begin{proposition} \label{proposition restriction functor R is
equivalence of categories} The restriction functor $R
:\widetilde{\CC}_{G/K} \to \CC_K$ is an equivalence of categories,
if and only if the following homomorphism is trivial
$$H^3_G(G/K, \integer) \to
H^3(G/K,\integer).$$
\end{proposition}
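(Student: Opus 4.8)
The plan is to establish both implications by connecting the categorical statement of Proposition \ref{proposition equivalence of categories} to the topological obstruction discussed in Section \ref{remark equivalence of categories}. The key observation is that Proposition \ref{proposition equivalence of categories} already shows that the restriction functor $R$ is always full and faithful; the only content that can fail is \emph{essential surjectivity}. Thus the whole proof reduces to understanding when $R$ is essentially surjective, and the hypothesis of Proposition \ref{proposition equivalence of categories}---that every local object $P \to G/K$ trivializes as a $P\UU(\HH)$-principal bundle---was precisely what guaranteed essential surjectivity there.

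First I would recall from Section \ref{remark equivalence of categories} that a local object $P \to G/K$, being of the form $P\UU(\HH) \times_f G$ for a stable homomorphism $f$, trivializes as a principal bundle if and only if the composite $G/K \to EG \times_G G/K \to BP\UU(\HH)$ is nullhomotopic, equivalently if and only if the image of the class classifying $P$ under the homomorphism $H^3_G(G/K, \integer) \to H^3(G/K, \integer)$ vanishes. Since every isomorphism class of local object corresponds (via Proposition \ref{proposition pi zero of stable hom} and the classification of local objects) to a class in $H^3_G(G/K,\integer) \cong Ext_c(K, S^1)$, the condition ``all local objects trivialize as principal bundles'' is exactly the condition that the homomorphism $H^3_G(G/K, \integer) \to H^3(G/K, \integer)$ is trivial.

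For the ``if'' direction I would argue that when this homomorphism is trivial, every local object trivializes, so the hypothesis of Proposition \ref{proposition equivalence of categories} is met and $R$ is an equivalence; this direction is essentially a restatement of Proposition \ref{proposition equivalence of categories} combined with the cohomological reformulation. For the converse, I would show that if the homomorphism is nontrivial, then there exists a stable homomorphism $f : K \to P\UU(\HH)$ whose associated local object $P\UU(\HH)\times_f G$ does \emph{not} trivialize as a principal bundle. The point is that every object of $\widetilde{\CC}_{G/K}$ arises from a genuine functor $\psi$ built out of a trivialization $\phi$ of some bundle via formula (\ref{functor psi}), so the restriction $R(\psi)$ of any object hits only those stable homomorphisms whose associated bundle is principally trivializable; if $f$ yields a nontrivial class in $H^3(G/K,\integer)$, then $f$ cannot be $P\UU(\HH)$-conjugate to any $R(\psi)$, and $R$ fails to be essentially surjective, hence is not an equivalence.

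The main obstacle I anticipate is making the converse direction fully rigorous, namely showing that an object of $\widetilde{\CC}_{G/K}$ cannot encode a homomorphism whose associated principal bundle is nontrivial. This requires verifying that the formula $\psi(g,k[H]) = \pi_1(\phi(gx))\pi_1(\phi(x))^{-1}$ genuinely presupposes the existence of a global trivialization $\phi$ of the bundle, so that the stable functors making up $\mathrm{Obj}(\widetilde{\CC}_{G/K})$ detect exactly the principally trivial local objects. One must be careful to argue that the $P\UU(\HH)$-conjugacy classes of restrictions $R(\psi)$ coincide precisely with the stable homomorphisms $f$ for which $P\UU(\HH)\times_f G$ is principally trivial, and that this matches the kernel of the cohomology map; once this identification is in place, the equivalence ``$R$ is an equivalence of categories $\iff$ the homomorphism $H^3_G(G/K,\integer) \to H^3(G/K,\integer)$ is trivial'' follows directly.
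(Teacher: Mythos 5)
Your proposal is correct and follows essentially the same route as the paper: it reduces the question to essential surjectivity (fullness and faithfulness being automatic), identifies triviality of the cohomology homomorphism with principal-bundle triviality of all local objects via the discussion in Section \ref{remark equivalence of categories}, and for the converse uses exactly the paper's key fact that a stable functor $\psi$ extending a homomorphism $f$ encodes a trivialization of $P\UU(\HH)\times_f G$, so non-trivializable local objects obstruct essential surjectivity. The step you flag as the anticipated obstacle is precisely what the paper asserts in one line (``the bundle $P\UU(\HH)\times_f G$ can be trivialized with the information of the functor $\psi$''), and your careful remark that one should work with $P\UU(\HH)$-conjugacy classes of restrictions $R(\psi)$ is, if anything, slightly more precise than the paper's wording.
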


\begin{proof}
We have seen that the local triviality of the local objects is
equivalent to the triviality of the homomorphism, and moreover
that it implies that the functor $R$ is an equivalence of
categories.

Now, if the functor $R$ is essentially surjective, then any stable
homomorphism $f: K \to P\UU(\HH)$ is the restriction of a stable
functor $\psi: G \ltimes G/K \to P\UU(\HH)$ , and therefore the
bundle $P\UU(\HH) \times_f G$ can be trivialized with the
information of the functor $\psi$.
\end{proof}

\begin{cor}
The functor $R :\widetilde{\CC}_{SU(3)/SO(3)} \to \CC_{SO(3)}$ is
not an equivalence.
\end{cor}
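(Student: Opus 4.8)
The plan is to deduce the statement directly from Proposition \ref{proposition restriction functor R is equivalence of categories} together with the cohomological computation already carried out in Subsection \ref{remark equivalence of categories}. By that proposition, the restriction functor $R:\widetilde{\CC}_{SU(3)/SO(3)} \to \CC_{SO(3)}$ fails to be an equivalence precisely when the homomorphism
$$H^3_{SU(3)}(SU(3)/SO(3),\integer) \to H^3(SU(3)/SO(3),\integer)$$
is nontrivial, so the whole corollary reduces to exhibiting nontriviality of this single map.

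First I would recall the identification of the two groups appearing above. The target is the third cohomology of the Wu manifold $X_{-1}=SU(3)/SO(3)$; since $H_2(X_{-1},\integer)=\integer/2$ by \cite{Barden}, the universal coefficient theorem gives $H^3(X_{-1},\integer)=\integer/2$. For the source, the homotopy equivalence $ESU(3)\times_{SU(3)} SU(3)/SO(3)\stackrel{\simeq}{\to} BSO(3)$ identifies the equivariant group with $H^3(BSO(3),\integer)$, so the map in question is a homomorphism with values in $\integer/2$.

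Next I would analyze this map via the Serre spectral sequence of the fibration $SU(3)/SO(3)\to ESU(3)\times_{SU(3)} SU(3)/SO(3)\to BSU(3)$, exactly as in Subsection \ref{remark equivalence of categories}: at total degree three the only surviving term is $E_\infty^{0,3}=H^0(BSU(3),H^3(X_{-1},\integer))=\integer/2$, so the associated edge homomorphism is an isomorphism $H^3(BSO(3),\integer)\stackrel{\cong}{\to}\integer/2$. Being an isomorphism onto a nonzero group, it is in particular nontrivial, and applying Proposition \ref{proposition restriction functor R is equivalence of categories} with this nontrivial homomorphism immediately yields that $R$ is not an equivalence.

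The only point requiring genuine care — and hence the main obstacle — is confirming that the edge homomorphism produced by the Serre spectral sequence really coincides with the geometric restriction map $H^3_G(G/K,\integer)\to H^3(G/K,\integer)$ featured in the proposition (equivalently, that the surviving class $E_\infty^{0,3}$ is the image of the fundamental cohomology class of the fiber under fiber inclusion). This identification is precisely the content established in the preceding discussion, where the composite $G/K \to EG\times_G G/K \to BP\UU(\HH)$ is shown to control local triviality; once it is invoked, the corollary follows with no further computation.
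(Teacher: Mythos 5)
Your proposal is correct and follows essentially the same route as the paper: the corollary is stated there without a separate proof precisely because it is the immediate combination of Proposition \ref{proposition restriction functor R is equivalence of categories} with the Serre spectral sequence computation for the Wu manifold $SU(3)/SO(3)$ carried out just before it, which is exactly the argument you reconstruct. The one point you flag as delicate --- that the surviving class $E_\infty^{0,3}$ computes the fiber-restriction map $H^3_G(G/K,\integer)\to H^3(G/K,\integer)$ --- is the standard identification of the edge homomorphism with restriction to the fiber, and is implicitly used by the paper in the same way.
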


Once one has classified all projective unitary stable $G$-equivariant bundles $P \to G/K$ over the orbit types $G/K$,
one may proceed to glue all these bundles via the information provided by their gauge groups. When the bundle
$P \to G/K$ is  trivializable as a $P\UU(\HH)$-bundle, we have shown that the category $\widetilde{\CC}_{G/K}$ contains
all the relevant information to perform the gluing, and since we would like to construct the universal equivariant projective unitary stable 
bundle using the categories $\widetilde{\CC}_{G/K}$, we restric to the case on which $G$ is discrete and the groups $K$ are finite.
The general case on which $G$ is not discrete will not be done in this paper since it would take us away
from the main focus of this work which is the study of the category of functors from $G \ltimes G/K $ to $P\UU(\HH)$. 

\section{Universal equivariant projective unitary stable bundle for proper and discrete actions}
\label{chapter universal bundle}

In this chapter we construct a universal model for the equivariant projective unitary stable bundles for proper
actions of a discrete group $G$.
Hence throughout this chapter the group $G$ is discrete and
$\OO_P^G$ has for objects the sets $G/K$ for all $K$ finite
subgroups of $G$.

 \label{section universal bundle}

\subsection{A model for the universal equivariant projective unitary bundle
for the orbit type $G/K$}  We will proceed in a similar way as in Section
\ref{section universal for trivial K spaces} by constructing a
category $\widetilde{\DD}_{G/K}$ together with a functor
$\widetilde\gamma: \widetilde{\DD}_{G/K} \to \widetilde{\CC}_{G/K}$,
whose geometric realization provides a concrete description of the pullback of the $P\UU(\HH)$-bundle
$|\gamma|:|\DD_K| {\to} |\CC_K|$ along the restriction map $R: |\widetilde{\CC}_{G/K}| \to  |\CC_K|$.

\begin{definition}
Let $\widetilde{\DD}_{G/K}$ be the category whose morphisms
${\rm{Mor}}({\widetilde{\DD}_{G/K}})$ are $$Funct_{st}(G\ltimes G/K,
P\UU(\HH)) \times P\UU(\HH)\times Maps(G/K,
P\UU(\HH)),$$ whose objects are
$$Funct_{st}(G\ltimes
G/K, P\UU(\HH)) \times P\UU(\HH)$$ and whose structural
maps are defined by
\begin{eqnarray*} \source(\psi, F, \sigma)& = &(\psi, F)\\
 \target(\psi, F, \sigma)& =&
(\sigma F^{-1} \psi F \sigma^{-1}, \sigma([K]))\end{eqnarray*}
\begin{eqnarray*}
 \composition((\psi, F, \sigma),(\sigma F^{-1}
\psi F \sigma^{-1}, \sigma([K]), \delta))  =
(\psi, F, & \delta \sigma([K])^{-1} \sigma ).
\end{eqnarray*}
\end{definition}

Let us consider the functor that forgets the $P\UU(\HH)$
coordinate
$$\widetilde{\gamma} : \widetilde{\DD}_{G/K} \to \widetilde{\CC}_{G/K}$$
that at the level of morphisms and objects is defined by
$$\widetilde{\gamma}(\psi, F, \sigma):= (\psi,F \sigma^{-1}), \
 \ \widetilde{\gamma}(\psi, F):= \psi. $$

\begin{proposition} \label{proposition bundle for G/K}
The bundle $$P\UU(\HH) \to |\widetilde{\DD}_{G/K}|
\stackrel{|\widetilde{\gamma}|}{\to} |\widetilde{\CC}_{G/K}|$$ is a
projective unitary stable $N(K)$-equivariant bundle.
\end{proposition}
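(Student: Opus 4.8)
The plan is to check, in turn, the three requirements that Definition~\ref{def projective unitary G-equivariant stable bundle} places on $|\widetilde{\gamma}|$: that it is a principal $P\UU(\HH)$-bundle, that it carries a left $N(K)$-action commuting with the right $P\UU(\HH)$-action and lifting an action on the base, and that this action is locally of the stable form prescribed there.

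For the bundle structure I would first exhibit the right $P\UU(\HH)$-action on $\widetilde{\DD}_{G/K}$: on objects $(\psi,F)\cdot H:=(\psi,FH)$ and on morphisms $(\psi,F,\sigma)\cdot H:=(\psi,FH,\sigma H)$, where $\sigma H$ is the pointwise product $x\mapsto\sigma(x)H$. A direct computation shows this is a free functorial action — it is compatible with $\source$, $\target$ and $\composition$ — and that $\widetilde{\gamma}$ is invariant under it, since $(FH)(\sigma H)^{-1}=F\sigma^{-1}$. Exactly as for $|\DD_K|\stackrel{|\gamma|}{\to}|\CC_K|$, the results of \cite[Section~3]{Segal} then give that $|\widetilde{\gamma}|\colon|\widetilde{\DD}_{G/K}|\to|\widetilde{\CC}_{G/K}|$ is a principal $P\UU(\HH)$-bundle; equivalently, one identifies it with the pullback of $|\gamma|$ along $R$ announced at the start of this section, and pullbacks of principal bundles are principal bundles.

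Next I would define the left $N(K)$-action. Because $n\in N(K)$ normalises $K$, right translation $g[K]\mapsto gn[K]$ is a well-defined automorphism $R_n$ of the groupoid $G\ltimes G/K$, with $R_{nm}=R_m\circ R_n$ and $R_k=\mathrm{id}$ for $k\in K$. Modelling the formula on the left $K$-action $(g,(f,F,G))\mapsto(f,f(g)F,GF^{-1}f(g)F)$ on $\DD_K$, I set on objects
\[
n\cdot(\psi,F):=(\psi\circ R_n,\ \psi(n,[K])F),
\]
and on morphisms the corresponding formula, in which the gauge coordinate $\sigma$ is reparametrised by $R_n$ and then multiplied by the constant $F^{-1}\psi(n,[K])F$. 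That this is a left action follows from the functor cocycle identity $\psi(nm,[K])=\psi(n,m[K])\psi(m,[K])$ together with $(\psi\circ R_m)(n,[K])=\psi(n,m[K])$, which combine to give $n\cdot(m\cdot(\psi,F))=(\psi\circ R_{nm},\psi(nm,[K])F)=(nm)\cdot(\psi,F)$. One then checks that the action is functorial and commutes with the right $P\UU(\HH)$-action, and that $\widetilde{\gamma}$ is $N(K)$-equivariant for the induced base action $\psi\mapsto\psi\circ R_n$; since $R_k=\mathrm{id}$, this base action factors through $N(K)/K$, while the subgroup $K$ acts on the fibre over $\psi$ through the stable homomorphism $\psi|_K$.

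The substantive point, and the one I expect to be the main obstacle, is the local stable-triviality condition. As $G$ is discrete and acts properly, every isotropy group $N(K)_\psi$ of the base action is finite and contains $K$ as a normal subgroup, and over a slice the bundle trivialises with $N(K)_\psi$ acting on the fibre $P\UU(\HH)$ through the homomorphism $n\mapsto\psi(n,[K])$. Its restriction to $K$ is $\psi|_K$, which is stable by the definition of $Funct_{st}$; the difficulty is to promote this to stability of the full homomorphism $N(K)_\psi\to P\UU(\HH)$ when the isotropy strictly exceeds $K$. I would handle this by a Clifford-theory argument: using that $K$ is normal of finite index in $N(K)_\psi$, that $S^1$ acts by scalars throughout, and that $\HH$ is the universal Hilbert space containing every relevant irreducible of $\widetilde{N(K)_\psi}$ infinitely often, one shows the induced isotropy representation again meets each such irreducible infinitely often. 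Granting this, the local model is precisely the one demanded by Definition~\ref{def projective unitary G-equivariant stable bundle}, with $f_x$ the homomorphism $n\mapsto\psi(n,[K])$; the proposition then follows. It is worth noting that it is exactly the functor description of $\widetilde{\DD}_{G/K}$ (rather than the homomorphism description of $\DD_K$) that makes both the $N(K)$-action and these local trivialisations available.
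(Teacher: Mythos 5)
Your construction of the two actions and of the principal-bundle structure coincides with the paper's proof: the right action $(\psi,F,\sigma)\cdot L=(\psi,FL,\sigma L)$, the left action $n\cdot(\psi,F,\sigma)=(\psi^n,\psi(n,[K])F,\sigma^nF^{-1}\psi(n,[K])F)$, the verification that this is an action via the cocycle identity $\psi(nm,[K])=\psi(n,m[K])\,\psi(m,[K])$ (this is exactly the paper's displayed computation \eqref{explicit calculation}), the commutation of the two actions, the equivariance of $\widetilde\gamma$, and the identification of $|\widetilde\gamma|$ with the pullback of $|\gamma|\colon|\DD_K|\to|\CC_K|$ along $R$. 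Up to that point you and the paper agree.

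The last step is where your proposal breaks down, and it breaks down because the statement you single out as ``the substantive point'' is false, not merely unproved. You set out to show that for every base point the full isotropy homomorphism $N(K)_\psi\to P\UU(\HH)$, $n\mapsto\psi(n,[K])$, is stable, and propose a Clifford-theory argument. Consider $G=N(K)=\integer/2$, $K=\{e\}$, and $\psi\equiv 1$ the trivial functor. Then $\psi$ lies in $Funct_{st}$ (its restriction to the trivial group is stable), it is fixed by all of $N(K)$, and the isotropy acts on the fiber through the trivial homomorphism $\integer/2\to P\UU(\HH)$, which is \emph{not} stable in the sense of Definition~\ref{definition stable}: the extension $\widetilde{\integer/2}\cong\integer/2\times S^1$ has two irreducibles on which the central $S^1$ acts by scalars, but $\HH$, viewed as a $\widetilde{\integer/2}$-representation through the trivial homomorphism, contains only the one with trivial $\integer/2$-factor. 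Your sketch presupposes that ``$\HH$ is the universal Hilbert space containing every relevant irreducible of $\widetilde{N(K)_\psi}$ infinitely often'', but that is precisely the property under scrutiny: the $\widetilde{N(K)_\psi}$-module structure of $\HH$ is induced by the very homomorphism being tested, and the definition of $Funct_{st}$ constrains it only on $K$. (Your claim that every isotropy group $N(K)_\psi$ is finite is also unjustified: $N(K)/K$ can be infinite, e.g.\ $G=\integer$, $K=\{e\}$, and again $\psi\equiv 1$ is fixed by everything.) The paper's proof makes no such claim: it checks only that the subgroup $K$ --- the subgroup acting trivially on the base $|\widetilde{\CC}_{G/K}|$ --- acts on the fibers through stable homomorphisms, which is automatic because stable functors restrict to elements of $Hom_{st}(K,P\UU(\HH))$; that is how ``stable'' in Proposition~\ref{proposition bundle for G/K} must be read, rather than as the full local condition of Definition~\ref{def projective unitary G-equivariant stable bundle} applied to the (possibly non-proper) $N(K)$-action. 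This weaker property is also all the later construction needs: in the homotopy colimit defining $\BB$, the group $G$ moves only the $\nabla$-coordinates, so point stabilizers there are conjugates of subgroups of the finite groups $K$, where stability does hold.
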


\begin{proof}
Since $\widetilde{\CC}: G/K \mapsto \widetilde{\CC}_{G/K}$ is a functor from the category
 of canonical orbits $\OO_G^P$ to the category of small categories we have an induced
 left action of ${\rm{Mor}}_{\OO_G^P}(G/K,G/K) = N(K)/K$ on $\widetilde{\CC}_{G/K}$ as well as on
  its geometric realization. On the objects of $\widetilde{\CC}_{G/K}$
  the left $N(K)/K$-action is explicitly given by
\[ n  \cdot \psi= \psi^n,\text{ with } \psi^n(g,k[K]) = \psi(g,kn[K]),\]
while on morphisms it is given by $n \cdot (\psi, \sigma) = (\psi^n, \sigma^n)$,
 where $\sigma^n= \sigma \circ r_n$ and $r_n: G/K \to G/K$ is the $G$-equivariant map with $r_n(g[K])=gn[K]$.
 Now, on the one side we have that
 \begin{align*}
 \left( n \cdot (\sigma^{-1} \psi \sigma) \right) (g,k[K]) &= (\sigma^{-1} \psi \sigma)(g,kn[K])\\
 &= \sigma(gkn[K])^{-1} \psi(g,kn[K]) \sigma(kn[K])
 \end{align*}
 and on the other
  \begin{align*}
 \left( (\sigma^n)^{-1} \psi^n \sigma^n \right) (g,k[K]) &= \sigma^n(gk[K])^{-1} \psi^n(g,k[K]) \sigma^n(k[K])\\
 &= \sigma(gkn[K])^{-1} \psi(g,kn[K]) \sigma(kn[K]);
 \end{align*}
therefore we have that with the previous action,
the group $N(K)/K$ acts on the category $\widetilde{\CC}_{G/K}$.

Let us now endow the category $\widetilde{\DD}_{G/K}$ with a
right $P\UU(\HH)$ action and with a left $N(K)$ action, commuting one
with each other.

The left $N(K)$ action $N(K) \times \widetilde{\DD}_{G/K} \to
\widetilde{\DD}_{G/K}$ at the level of morphisms is defined by
$$n \cdot  (\psi, F, \sigma) := (\psi^n, \psi(n,[K])F,
\sigma^n F^{-1} \psi(n,[K]) F)$$ and at the level of objects by
$$n \cdot  (\psi, F) := (\psi^n, \psi(n,[K]) F);$$ 
note that
\begin{align*}
\target(n \cdot (\psi, F ,\sigma)) = (\sigma^nF^{-1}\psi^nF(\sigma^n)^{-1},\sigma^n([K])F^{-1}\psi(n,[K])F)
\end{align*}
and 
\begin{align*}
n\cdot \target (\psi, F ,\sigma) & = n\cdot (\sigma F^{-1} \psi F \sigma^{-1}, \sigma([K]))\\
&= (\sigma^n F^{-1} \psi^n F (\sigma^n)^{-1}, \sigma(n[K])F^{-1} \psi(n,[K]) F ),
\end{align*}
therefore we have that with the previous action $N(K)$ acts on the category $\widetilde{\DD}_{G/K}$.

The right $P\UU(\HH)$ action $\widetilde{\DD}_{G/K} \times
P\UU(\HH) \to \widetilde{\DD}_{G/K}$ can be defined at the level
of morphisms
$$(\psi, F, \sigma) \cdot L := (\psi, FL,\sigma L)$$
and note that
\begin{align*}
\target((\psi,F,\sigma) \cdot L) &= \target(\psi, FL,\sigma L) \\
&= (\sigma L (FL)^{-1} \psi FL (\sigma L)^{-1}, \sigma([K])L) \\
&= (\sigma F^{-1} \psi F\sigma^{-1}, \sigma([K])L) \\
&= (\target(\psi,F,\sigma)) \cdot L.
\end{align*}
Moreover note that
\begin{align*}
\left( n \cdot (\psi,F, \sigma) \right) \cdot L &= (\psi^n, \psi(n, [K])FL, \sigma^n F^{-1} \psi(n,[K])FL) \\
&=(\psi^n, \psi(n, [K])FL, (\sigma L)^n (FL)^{-1} \psi(n,[K])FL)\\
&= n \cdot \left( ( \psi, F,\sigma) \cdot L \right)
\end{align*}
and therefore we see that the $P\UU(\HH)$ action is indeed
an action, that commutes with the composition of the category, and
that it commutes with the $N(K)$ action.

By definition of the functor $\widetilde{\gamma}$ the induced $P\UU(\HH)$
action on $\widetilde{\CC}_{G/K}$ is trivial, and the
induced $N(K)$ action on $\widetilde{\CC}_{G/K}$ coincides with the $N(K)/K$-action on $\widetilde{\CC}_{G/K} $ referred to at the beginning of the proof.

The stability of the action of the subgroup $K$ on the fibers is
satisfied by construction. Therefore we have that the bundle
$$P\UU(\HH) \to |\widetilde{\DD}_{G/K}| \stackrel{|\widetilde{\gamma}|}{\to}
|\widetilde{\CC}_{G/K}|$$ is a unitary stable $N(K)$-equivariant
bundle.
\end{proof}

\subsection{A model for the universal equivariant projective unitary stable
bundle} The individual spaces $|\widetilde{\DD}_{G/K}|$ assemble like the
restrictions of a projective unitary stable $G$-equivariant bundle
to the fixed point sets of the base. We will glue them together
 accordingly in order to obtain a universal projective unitary stable $G$-equivariant bundle.
  

Let $\widetilde\OO_G^P$ denote the following category. Its objects are copies of $G$,
 one for each canonical proper orbit $G/K$, and we will write $G_{G/H}$ for the copy
 of $G$ which belongs to a proper orbit $G/H$. The set of morphisms from $G_{G/H}$ to $G_{G/K}$ is given by
\[ {\rm Mor}_{\widetilde\OO_G^P}(G_{G/H},G_{G/K}) = \{m \in G\ |\ m^{-1}Hm \subset K\}.\]
A morphism $m\in {\rm Mor}_{\widetilde\OO_G^P}(G/H,G/K)$ should be thought of as a
 $G$-equivariant map from $G$ to $G$ given through right multiplication by $m$, hence
 it covers the induced map $r_m: G/H \to G/K, gH \mapsto gmK$. Consequently the
 composition $n \circ m$
 of two morphisms $m\in {\rm Mor}_{\widetilde\OO_G^P}(G/H,G/K)$ and $n\in {\rm Mor}_{\widetilde\OO_G^P}(G/K,G/L)$
  is defined as the right multiplication by $m \cdot n$.

There is a canonical functor $\pi: \widetilde\OO_G^P \to \OO_G^P$, which maps the object $G_{G/K}$ to $G/K$, and which maps a morphism $m\in {\rm Mor}_{\widetilde\OO_G^P}(G_{G/H},G_{G/K})$ to $r_m: G/H \to G/K$, i.e.~for every morphism $m$ the functor $\pi$ decodes the commutative diagram
$$\xymatrix{G_{G/H} \ar[r]^{m} \ar[d] & G_{G/K} \ar[d]
\\ G/H \ar[r]^{r_m} & G/K.} $$

The categories $\widetilde{\DD}_{G/K}$ provide a contravariant functor $\widetilde{\DD}$ from ${\widetilde\OO_G^P}$ to the category of small categories which on the level of objects is given by
\[ \widetilde{\DD}(G_{G/K}) = \widetilde{\DD}_{G/K},\]
while for a morphism $m\in{\rm Mor}_{\widetilde\OO_G^P}(G_{G/H},G_{G/K})$ the corresponding map
\[ \widetilde{\DD}(m): \widetilde{\DD}_{G/K} \to \widetilde{\DD}_{G/H}\]
is given at the level of morphisms by
\[ \widetilde{\DD}(m)  (\psi, F, \sigma) = (\psi^m, \psi(m,[K])F, \sigma^m F^{-1}\psi(m,[K])F )\]
and at the level of objects by
\[ \widetilde{\DD}(m)  (\psi, F) = (\psi^m, \psi(m,[K])F)\]
where $\psi^m$ and $\sigma^m$ are defined by the formulas
\begin{eqnarray*}
  \psi^m(g,h[H]) & := & \psi(g,hm[K])\\
  \sigma^m(h[H]) & := & \sigma(hm[K]).
\end{eqnarray*}

Note that $\widetilde{\DD}$ respects composition in
$\widetilde\OO_G^P$ since for two composable morphisms $m:G_{G/H}
\to G_{G/K}$ and $n: G_{G/K} \to G_{G/L}$ and a morphism
$(\psi,F,\sigma)$ in $\widetilde\DD_{G/L}$ we have that
\begin{align}
\nonumber \widetilde{\DD}(m) \circ \widetilde{\DD}(n) & (\psi, F, \sigma)\\
\nonumber  = &  \widetilde{\DD}(m) (\psi^n, \psi(n,[L])F, \sigma^n F^{-1}\psi(n,[L])F)\\
\nonumber = &  \left((\psi^n)^m, \psi^n(m,[K])\psi(n,[L])F, \right. \\
\label{explicit calculation} &  \left. (\sigma^n)^m
F^{-1}\psi(n,[L])F
(\psi(n,[L])F)^{-1}\psi^n(m,[K])\psi(n,[L])F\right)\\
\nonumber  = & \left(\psi^{m \cdot n}, \psi(m, n[L]) \psi(n,[L])
F, \sigma^{m\cdot n}F^{-1}\psi(m,n[L])\psi(n,[L])F\right)\\
\nonumber  = & \left(\psi^{m \cdot n}, \psi(m \cdot n,[L]) F,
\sigma^{m\cdot n}F^{-1}\psi(m \cdot n,[L]) F\right)\\
  = & \widetilde{\DD}(n \circ m)
(\psi, F, \sigma). \nonumber
\end{align}
Note also that there is the antihomomorphism
\[{Mor}_{\widetilde\OO_G^P}(G_{G/K},G_{G/K}) \to N(K),\ \ \ \ \ \ n \circ m \mapsto m \cdot n,\]
 and that the contravariant action of ${\rm{Mor}}_{\widetilde\OO_G^P}(G_{G/K},G_{G/K})$ on $\widetilde{\DD}_{G/K}$
  coincides with the left $N(K)$ action introduced in the proof of Proposition \ref{proposition bundle for G/K}.

\begin{rem}
The restriction functors $R':\widetilde\DD_{G/K} \to \DD_K$ and
$R:\widetilde\CC_{G/K} \to \CC_K$ induced by the inclusion $K
\ltimes K/K \hookrightarrow G \ltimes G/K$ are compatible with the
functors $\widetilde\gamma$ and $\gamma$ respectively, so
that we have a commutative diagram
$$\xymatrix{|\widetilde\DD_{G/K}| \ar[r]^{R'} \ar[d] & |\DD_K|\ar[d]
\\ |\widetilde\CC_{G/K}| \ar[r]^{R} & |\CC_K|.} $$
The diagram is a map of $K$-equivariant $P\UU(\HH)$-bundles, hence in
 particular the bundle $|\widetilde\gamma|: |\widetilde\DD_{G/K}| \to |\widetilde\CC_{G/K}|$
  is a pullback of the projective unitary stable $K$-equivariant bundle $|\gamma|: |\DD_K| \to |\CC_K|$.

  Moreover, since the restriction functor $R:\widetilde\CC_{G/K} \to
  \CC_K$ is an equivalence of categories
  (see Proposition \ref{proposition equivalence of categories}), the induced map $R:|\widetilde\CC_{G/K}| \to
  |\CC_K|$ is a homotopy equivalence and therefore the bundle
  $|\widetilde\DD_{G/K}|\to |\widetilde\CC_{G/K}|$ is another
  universal bundle for projective unitary $K$-equivariant bundles
  of $K$-trivial spaces. The main difference between the two
  bundles $|\widetilde\DD_{G/K}|$ and $|\widetilde\DD_K|$ is that
 on the first we have a canonical way to extend the $K$ action  to
 the group $N(K)$, meanwhile on the second we do not know whether
 the action could be extended in a canonical way. It is precisely this
 extension property which later on will allow us to glue together the various
 $|\widetilde\DD_{G/K}|$ to obtain $G$-space.
\end{rem}

The various functors $\widetilde\gamma: \widetilde\DD_{G/K} \to \widetilde\CC_{G/K}$ provide a natural transformation
 $\widetilde\gamma$ from $\widetilde\DD$ to $\widetilde\CC \circ \pi$.
 Now for constructing the universal bundle consider the covariant functors
which at the level of objects are given by
\begin{align*}
\widetilde\nabla & :  \widetilde\OO_G^P \to Spaces,\ G_{G/K} \mapsto G\\
\nabla & :  \OO_G^P \to Spaces, \ G/K \mapsto G/K
\end{align*}
and at the level of morphisms are the $G$-equivariant maps defined
by multiplication on the right.

Assemble the covariant functors $\widetilde{\nabla} $ and $\nabla$
with the contravariant functors $|\widetilde{\DD}|$ and
$|\widetilde{\CC}|$ respectively in order to get spaces that we
denote
\[\EE := |\widetilde{\DD}| \times_{h\widetilde\OO_G^P} \widetilde{\nabla} \ \ \ {\rm{and}}
\ \ \ \  \BB:=|\widetilde{\CC}| \times_{h\OO_G^P} \nabla\ \] where
we have that $|\widetilde{\DD}|(G_{G/K}):=
|\widetilde{\DD}_{G/K}|$ and $|\widetilde{\CC}|({G/K}):=
|\widetilde{\CC}_{G/K}|$.

 The natural transformation
$\widetilde\gamma: \widetilde\DD \to \widetilde\CC \circ \pi$
provides a map
 \[ |\widetilde{\DD}| \times_{h\widetilde\OO_G^P} \widetilde{\nabla}
 \longrightarrow |\widetilde{\CC}| \times_{h\OO_G^P} \nabla\]
 which makes
 \[ P\UU(\HH) \to \EE \to \BB\] into a projective unitary stable $G$-equivariant bundle.

\begin{theorem} \label{theorem the universal bundle}
The bundle
$$P\UU(\HH) \to \EE \to \BB$$
is a universal projective unitary stable $G$-equivariant bundle
for proper $G$-actions.
\end{theorem}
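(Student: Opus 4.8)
The plan is to establish universality by verifying the two-part characterization that $\EE \to \BB$ is itself a projective unitary stable $G$-equivariant bundle (already done in the construction immediately preceding the statement) together with the classifying property: for every proper $G$-$ANR$ space $X$ and every projective unitary stable $G$-equivariant bundle $P \to X$, there is a $G$-map $X \to \BB$, unique up to $G$-homotopy, such that the pullback of $\EE$ is $G$-isomorphic to $P$. The key leverage is Theorem \ref{theorem Davis-Lueck}: since $G$ is discrete, the functors $\Phi$ and $\_ \times_{\OO_G^P}\nabla$ are adjoint, so data on a proper $G$-space is equivalent to data on the fixed-point system in $\OO_G^P$-spaces. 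The strategy is to reduce the global classification problem to the local classifications already carried out orbit type by orbit type.

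\emph{First I would} show that the fixed-point system $\Phi X$ can be built up from the orbit types, and that a projective unitary stable $G$-equivariant bundle $P \to X$ restricts, over each orbit $G/K$ appearing in $X$, to a local object classified by the space $|\widetilde{\CC}_{G/K}| \simeq |\CC_K|$ (Corollary \ref{corollary local objects trivial imply equivalence of categories}, valid here since $G$ discrete makes all local objects trivializable). The point of passing to the thickened orbit category $\widetilde\OO_G^P$ and the categories $\widetilde{\DD}_{G/K}$, $\widetilde{\CC}_{G/K}$ built from \emph{functors} on $G \ltimes G/K$ rather than homomorphisms on $K$, is precisely that the $N(K)/K$-actions assemble into honest functoriality over $\widetilde\OO_G^P$, so that the homotopy coend $\BB = |\widetilde{\CC}| \times_{h\OO_G^P} \nabla$ correctly encodes the gluing of local classifying spaces. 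I would invoke the final $\OO_G^P$-homotopy equivalence $\XX \to \Phi(\XX \times_{h\OO_G^P}\nabla)$ noted after Theorem \ref{theorem Davis-Lueck} to identify $\Phi\BB$ with the diagram $G/K \mapsto |\widetilde{\CC}_{G/K}|$ up to equivalence.

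\emph{Then I would} assemble a classifying map: given $P \to X$, apply $\Phi$ to obtain an $\OO_G^P$-space $\Phi X$ together with, over each orbit type, a local object whose classifying map into $|\widetilde{\CC}_{G/K}|$ is furnished by Theorem \ref{theorem classifying map for trivial K spaces}. Because $\widetilde{\CC}$ is a genuine functor on $\OO_G^P$ and these local classifying maps are built naturally from the transition data of $P$, they are compatible with the morphisms of $\OO_G^P$ and hence assemble into a map of $\OO_G^P$-spaces $\Phi X \to \Phi \BB$; adjointness then produces the desired $G$-map $X \to \BB$, and the pullback of $\EE$ is $G$-isomorphic to $P$ because this holds orbit type by orbit type and the gluing is functorial. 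Uniqueness up to $G$-homotopy follows the same route, running the homotopy version of Theorem \ref{theorem classifying map for trivial K spaces} on each orbit type and again using adjointness to glue the local homotopies.

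\emph{The hard part will be} the compatibility of the local classifying maps across morphisms in $\OO_G^P$ — that is, checking that the maps produced by the trivial-$K$-space classification of Theorem \ref{theorem classifying map for trivial K spaces} genuinely commute with the structure maps $\widetilde{\DD}(m)$ and the induced maps on $\widetilde{\CC}_{G/K}$, so that they descend to a well-defined map on the homotopy coend rather than merely a compatible family on the nose. This is where the explicit formulas for $\widetilde{\DD}(m)$, the verification \eqref{explicit calculation} that $\widetilde{\DD}$ respects composition, and the identification of the contravariant $\mathrm{Mor}_{\widetilde\OO_G^P}(G_{G/K},G_{G/K})$-action with the left $N(K)$-action all do their work; without the functorial model over $\widetilde\OO_G^P$ one would only get local bundles with no coherent gluing. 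A secondary technical point is ensuring the \emph{stability} condition is preserved under all these operations, which holds by construction since every $\widetilde{\DD}(m)$ acts through precomposition $\psi \mapsto \psi^m$ and left multiplication by values of $\psi$, both of which carry stable functors to stable functors.
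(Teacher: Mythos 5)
There is a genuine gap at exactly the step you flag as ``the hard part,'' and it is not fixable by the reasons you give. Your plan hinges on assembling the orbit-wise classifying maps into a map of $\OO_G^P$-spaces $\Phi X \to \Phi \BB$ so that the strict adjunction of Theorem \ref{theorem Davis-Lueck} can be applied; but the maps produced by Theorem \ref{theorem classifying map for trivial K spaces} are built from arbitrary choices (a trivializing cover of $X^K$, a partition of unity, Segal's maps $\Theta,\theta$) made separately on each fixed-point set, so there is no reason they commute strictly with the structure maps $X^K \to X^H$ of $\Phi X$ --- and strict naturality is what the adjunction consumes. Two further problems compound this: (i) Theorem \ref{theorem classifying map for trivial K spaces} lands in $|\CC_K|$, not in $|\widetilde{\CC}_{G/K}|$, so you must also invert the restriction equivalence $R\colon |\widetilde{\CC}_{G/K}| \to |\CC_K|$ of Corollary \ref{corollary local objects trivial imply equivalence of categories}, another non-canonical choice; (ii) naturality over $\OO_G^P$ already at the single object $G/K$ requires the classifying map $X^K \to |\widetilde{\CC}_{G/K}|$ to be $N(K)/K$-equivariant, which Theorem \ref{theorem classifying map for trivial K spaces} does not provide. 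Saying the maps are ``built naturally from the transition data'' begs the question: the transition data on different fixed-point sets are unrelated until one makes globally compatible choices, and rectifying a merely homotopy-coherent family of maps into a strict natural transformation of $\OO_G^P$-spaces would need machinery (cofibrant replacement or an obstruction-theoretic argument in the category of $\OO_G^P$-spaces) that you neither invoke nor supply.

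The paper's proof avoids this entirely by never passing to fixed-point systems. It works directly on $X$: by Definition \ref{def projective unitary G-equivariant stable bundle} every point has a slice neighborhood $V \cong U\times_{G_x}G$ over which $P$ trivializes through a stable homomorphism; contracting the slice and applying the functor $\psi$ of \eqref{functor psi} gives a $G$-equivariant map $P|_V \to Obj(\widetilde{\DD}_{G/G_x})$. A change of trivialization is a morphism of $\widetilde{\DD}_{G/G_x}$ (gauge transformations are its morphism space) and a change of base point is a morphism of $\widetilde{\OO}_G^P$, so the nerve of a single good $G$-cover of $X$ maps coherently into the homotopy coend defining $\EE \to \BB$; the Segal-style classification argument then yields the global $G$-map and the pullback square in one step, with all the compatibilities you would have to verify a posteriori already internal to the categorical structure of $\widetilde{\DD}$ over $\widetilde{\OO}_G^P$. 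In short: the compatible choices must be made once, on $X$, via a good $G$-cover --- not independently on each $X^K$ and then reconciled.
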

\begin{proof}

The bundles $$P\UU(\HH) \to |\widetilde{\DD}_{G/K}| \times_{N(K)}
G  \to |\widetilde{\CC}_{G/K}| \times_{N(K)/K} G/K$$ are
projective unitary stable $G$-equivariant bundles, and therefore
the bundle $\EE \to \BB$ is also a projective unitary stable
$G$-equivariant bundle since we defined the spaces $\EE$ and $\BB$
via a homotopy colimit.

 Let us check the universality: take
a projective unitary stable $G$-equivariant bundle $P \to X$ over
the proper $G$-space $X$ and take any point $x \in X$. By
Definition \ref{def projective unitary G-equivariant stable
bundle} we know that there exists a $G$-invariant neighborhood $V$ of
$x$ and a $G_x$-contractible slice $U$ such that
$$\alpha: P|_{V} \stackrel{\cong}{\to} (P\UU(\HH) \times U) \times_{G_{x}} G$$
where $G_x$ acts on $P\UU(\HH)$ via a stable homomorphism $f$.
Contracting $U$ to the point $x$, and trivializing the bundle in
the bottom with the map $\phi$ we get the diagram of projective
unitary stable bundles
$$\xymatrix{
P|_{V}\ar[r]^(0.3){\alpha}_(0.3)\cong &  (P\UU(\HH) \times U)
\times_{G_{x}} G
\ar[d]^p &\\
& P\UU(\HH) \times_{f} G \ar[r]^\phi & P\UU(\HH) \times G/G_x. }$$

Define the map
$$P|_{V} \to Obj(\widetilde{\DD}_{G/G_{x}}), \ \ \ z \mapsto (\psi,
\pi_1 (\phi (p (\alpha(z)))))$$ where $\psi : G \ltimes G/K \to
P\UU(\HH)$ is the functor induced by $\phi$ constructed in
\eqref{functor psi}, and note that this map is a $G$-equivariant
map of projective unitary stable bundles.

Changes on the trivialization $\phi$ are parameterized by the
gauge group ${Maps}(G/G_x, P\UU(\HH))$ and they are controlled by
the morphisms of the category $\widetilde{\DD}_{G/G_{x}}$, and
changes on the base point $x$ are parameterized by the morphisms
in $\widetilde{\OO}_G^P$. The composition of a change of base
point together with a change of trivialization induce a path in
the space $\EE$. Therefore the standard argument of the
classification of principal bundles permits us to conclude that by
choosing a {\it{good $G$ cover}} of $X$, we can find a
$G$-equivariant map of projective unitary stable bundles which
makes the diagram into a pullback diagram
$$\xymatrix{P \ar[r] \ar[d] & \EE \ar[d]
\\ X \ar[r] & \BB.} $$

\end{proof}

We have that the space $\BB$ is the base for the universal projective unitary
stable $G$-equivariant bundle and therefore we have that the isomorphism classes of
 projective unitary stable bundles over a proper $G$-space are in 1-1 correspondence with homotopy classes
of maps to the space $\BB$. 

\subsection{The homotopy type of the classifying space of equivariant projective unitary stable 
bundles}

Let us do a first attempt in trying to obtain a homotopy
equivalent space which relates to equivariant cohomology as in
Corollary \ref{corollary equivalence of homotopy type for G}.
Taking the Borel construction of the bundle $\EE \to \BB$ we
obtain a projective unitary bundle
$$P\UU(\HH) \to \EE \times_G EG \to \BB \times_G EG$$
whose classifying map to $BP\UU(\HH)$ induces an adjoint map
$$\Psi:\BB \to Maps(EG, BP\UU(\HH))$$
which is $G$-equivariant. This map restricted to the fixed point
set of a finite group $K$
\begin{align} \label{homotopy equivalence of B with Maps for K finite}
\Psi^K: \BB^K \stackrel{\simeq}{\to} Maps(EG, BP\UU(\HH))^K \cong Maps (BK,BP\UU(\HH))
\end{align}
is a homotopy equivalence; this follows from Corollary
\ref{corollary equivalence of homotopy type for G} and the
homotopy equivalences $$|\widetilde{\CC}_{G/K}| \simeq |\widetilde{\CC}_{G/K}| \times_{N(K)/K} E(N(K)/K) 
\stackrel{\simeq}{\to} \BB^K.$$ 

When the subgroup $K$ is not finite (the orbit type $G/K$ is not
an object in $\OO_G^P$) the fixed point set $\BB^K$ is empty,
meanwhile $Maps(BK,BP\UU(\HH))$ is far from being empty. Therefore
the map $\Psi$ is not in general a $G$-homotopy equivalence, but in
the case that $G$ is finite we have just shown that $\Psi$ induces
a $G$-homotopy equivalence.

\begin{theorem} \label{theorem homotopy equivalence for finite group}
The $G$-equivariant map $$\Psi : \BB \to Maps(EG, BP\UU(\HH)),$$
which is the adjoint of the classifying map of the projective
unitary bundle $\EE \times_G EG \to \BB \times_G EG$, is a
$G$-homotopy equivalence in the case that $G$ is finite.
\end{theorem}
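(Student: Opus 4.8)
The plan is to deduce the statement from the equivariant Whitehead theorem, which converts the question of whether $\Psi$ is a $G$-homotopy equivalence into a family of nonequivariant statements about fixed points. Recall that for a finite group $G$, a $G$-map between spaces of the $G$-homotopy type of $G$-CW complexes is a $G$-homotopy equivalence precisely when its restriction to $K$-fixed points is an ordinary homotopy equivalence for every subgroup $K \leq G$ (see \cite{tomDieck}). So the whole argument reduces to two ingredients: that $\Psi^K \colon \BB^K \to Maps(EG, BP\UU(\HH))^K$ is a homotopy equivalence for each $K \leq G$, and that both $\BB$ and $Maps(EG, BP\UU(\HH))$ have the $G$-homotopy type of $G$-CW complexes.

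For the first ingredient I would simply note that everything has already been prepared in \eqref{homotopy equivalence of B with Maps for K finite}. The crucial observation is that since $G$ is finite, \emph{every} subgroup $K \leq G$ is finite, hence compact, so that $G/K$ is an object of $\OO_G^P$ and the fixed-point analysis carried out there is valid for all subgroups, with no exceptions. Concretely, for each $K \leq G$ the homotopy equivalences
$$|\widetilde{\CC}_{G/K}| \simeq |\widetilde{\CC}_{G/K}| \times_{N(K)/K} E(N(K)/K) \stackrel{\simeq}{\to} \BB^K,$$
combined with Corollary \ref{corollary equivalence of homotopy type for G} and the identification $Maps(EG, BP\UU(\HH))^K \cong Maps(BK, BP\UU(\HH))$, exhibit $\Psi^K$ as a homotopy equivalence. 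This includes the extreme cases $K = \{e\}$, which records that the underlying nonequivariant map $\Psi$ is already an equivalence, and $K = G$; as $G$ is finite, there are no other subgroups to inspect.

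The second ingredient is where I expect the only real work to lie, since mapping spaces are not automatically of CW type. For the source, $\BB = |\widetilde{\CC}| \times_{h\OO_G^P} \nabla$ is built as a homotopy colimit over the finite orbit category out of geometric realizations of the topological categories $\widetilde{\CC}_{G/K}$, each carrying an $N(K)/K$-action; such bar-type homotopy-colimit constructions yield $G$-CW complexes, so $\BB$ has the required $G$-homotopy type. For the target I would exploit that $BP\UU(\HH)$ is a $K(\integer,3)$ and that $EG$ is a free $G$-CW complex, so that each fixed point set $Maps(EG, BP\UU(\HH))^K \simeq Maps(BK, BP\UU(\HH))$ has the homotopy type of a CW complex; replacing the target by a $G$-CW approximation with the same fixed-point homotopy types if necessary, the equivariant Whitehead theorem then applies and delivers the $G$-homotopy equivalence. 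The main obstacle, then, is purely the point-set bookkeeping needed to put $Maps(EG, BP\UU(\HH))$ into a $G$-CW framework; the homotopy-theoretic content is entirely supplied by \eqref{homotopy equivalence of B with Maps for K finite} together with the finiteness of $G$.
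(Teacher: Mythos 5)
Your proposal is correct and takes essentially the same route as the paper: the paper treats the theorem as an immediate consequence of the fixed-point equivalences \eqref{homotopy equivalence of B with Maps for K finite}, observing that when $G$ is finite every subgroup $K \leq G$ is finite so the analysis applies to all fixed-point sets, with the passage from fixed-point equivalences to a $G$-homotopy equivalence (the equivariant Whitehead-type criterion, cf.\ Theorem 7.4 of \cite{DavisLueck} as invoked for Proposition \ref{proposition BB = MM}) left implicit. Your explicit attention to the $G$-CW (or $G$-ANR) hypotheses on $\BB$ and on $Maps(EG, BP\UU(\HH))$ is a point-set refinement of the same argument rather than a different proof.
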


Now let us see the case where $G$ might not be finite.

\begin{definition} \label{definition category MM}
Let $M : \OO_G^P \to Spaces$ be the contravariant functor that is
defined on objects by
\[M(G/K):=Maps(G/K \times_G EG, BP\UU(\HH)) \]
and that to any morphism  $\alpha \in {\rm{Mor}}_{\OO_G^P}(G/K,
G/H)$ the induced morphism $M(\alpha): M(G/H) \to M(G/K)$ is
obtained by the composition with the induced map $G/K \times_G EG
\to G/H \times_G EG$.

Define the space $\MM:= M
\times_{h\OO_G^P} \nabla$.
\end{definition}

By the classification of principal bundles, there exist horizontal
maps making the following diagram commutative
$$\xymatrix{
\left(|\widetilde{\DD}_{G/K}| \times_{N(K)} G\right)\times_G EG \ar[r] \ar[d]  &
EP\UU(\HH)
\ar[d] \\
\left(|\widetilde{\CC}_{G/K}| \times_{N(K)/K} G/K\right) \times_G EG \ar[r] &
BP\UU(\HH).}$$ By taking the adjoint map of the bottom horizontal
map, we get a $N(K)/K$ equivariant map
$$|\widetilde{\CC}_{G/K}| \to Maps(G/K \times_G EG, BP\UU(\HH))$$
for all orbit types; these maps can be assembled into a natural transformation of functors
between the functors $|\widetilde{\CC}|$ and $M$ and therefore we get a $G$-equivariant map
  $$\Psi: \BB \to \MM.$$

\begin{proposition} \label{proposition BB = MM}
The map $\Psi: \BB \to \MM$ is a  $G$-equivariant homotopy
equivalence.
\end{proposition}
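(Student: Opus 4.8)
The plan is to exploit that $\Psi$ is the map on homotopy colimits $\_\times_{h\OO_G^P}\nabla$ induced by a natural transformation of contravariant $\OO_G^P$-diagrams whose component at $G/K$ is the $N(K)/K$-equivariant map $\Psi_{G/K}:|\widetilde{\CC}_{G/K}|\to M(G/K)=Maps(G/K\times_G EG,BP\UU(\HH))$. The main point is therefore to prove that this natural transformation is an objectwise homotopy equivalence, and then to promote an objectwise equivalence to a $G$-equivalence. For the promotion I would argue as follows. Applying the fixed point functor $\Phi$ and using the $\OO_G^P$-homotopy equivalence $\XX\to\Phi(\XX\times_{h\OO_G^P}\nabla)$ recorded after Theorem \ref{theorem Davis-Lueck}, we obtain $\OO_G^P$-equivalences $|\widetilde{\CC}|\stackrel{\simeq}{\to}\Phi\BB$ and $M\stackrel{\simeq}{\to}\Phi\MM$; by naturality of this unit these identifications intertwine $\Phi\Psi$ with the given natural transformation. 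Since $(\Phi Y)(G/K)=Y^K$, showing that each $\Psi_{G/K}$ is an equivalence then shows that $\Psi^K:\BB^K\to\MM^K$ is a homotopy equivalence for every finite $K\subset G$, whence $\Psi$ is a $G$-homotopy equivalence by the equivariant Whitehead theorem for proper $G$-CW complexes.

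It remains to prove the objectwise statement, that $\Psi_{G/K}$ is a homotopy equivalence for each object $G/K$ of $\OO_G^P$. Here I would identify both sides with spaces already understood in Chapter \ref{section PU(H)}. On the source, because $G$ is discrete every local object over $G/K$ is trivializable as a $P\UU(\HH)$-bundle, so Corollary \ref{corollary local objects trivial imply equivalence of categories} gives $|\widetilde{\CC}_{G/K}|\simeq|\CC_K|$, and by Section \ref{section universal for trivial K spaces} the latter is homotopy equivalent to $EP\UU(\HH)\times_{P\UU(\HH)}Hom_{st}(K,P\UU(\HH))$. On the target, $G/K\times_G EG$ is a model for $BK$ (the restriction of $EG$ to $K$ is a free contractible $K$-space), so $M(G/K)\simeq Maps(BK,BP\UU(\HH))$. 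Under these identifications $\Psi_{G/K}$ is the adjoint of the classifying map of the $P\UU(\HH)$-bundle $(|\widetilde{\DD}_{G/K}|\times_{N(K)}G)\times_G EG\to(|\widetilde{\CC}_{G/K}|\times_{N(K)/K}G/K)\times_G EG$, which is exactly the construction used in the proof of Corollary \ref{corollary equivalence of homotopy type for G} applied to the group $K$. That corollary shows this map induces isomorphisms $\pi_i\cong H^{3-i}(BK,\integer)$ on all homotopy groups, and since $BP\UU(\HH)\simeq K(\integer,3)$ both spaces have the homotopy type of CW complexes; hence $\Psi_{G/K}$ is a genuine homotopy equivalence.

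The step I expect to be the main obstacle is the compatibility claim in the previous paragraph: verifying that, after the chain of identifications $|\widetilde{\CC}_{G/K}|\simeq|\CC_K|\simeq EP\UU(\HH)\times_{P\UU(\HH)}Hom_{st}(K,P\UU(\HH))$ and $M(G/K)\simeq Maps(BK,BP\UU(\HH))$, the map $\Psi_{G/K}$ really is homotopic to the classifying-adjoint map of Corollary \ref{corollary equivalence of homotopy type for G}. This is a naturality chase through the definitions of $\widetilde{\gamma}$, $\gamma$ and the restriction equivalence $R$: one must check that the Borel-constructed bundle above restricts, along the equivalences, to the universal bundle over $|\CC_K|$. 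A secondary technical point is ensuring that $\BB$ and $\MM$ carry proper $G$-CW structures so that the equivariant Whitehead theorem applies; this is guaranteed because they are built by the homotopy colimit construction from the categories $\widetilde{\CC}_{G/K}$ and from the functor $M$.
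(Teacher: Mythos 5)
Your proposal is correct and follows essentially the same route as the paper: reduce to the fixed-point sets $\BB^K \to \MM^K$ for finite $K$, identify these via $|\widetilde{\CC}_{G/K}| \simeq |\CC_K|$ and $Maps(G/K \times_G EG, BP\UU(\HH)) \simeq Maps(BK, BP\UU(\HH))$ with the map of Corollary \ref{corollary equivalence of homotopy type for G}, and then pass from fixed-point-wise equivalences to a $G$-homotopy equivalence (the paper invokes Theorem 7.4 of \cite{DavisLueck}, which plays exactly the role of your equivariant Whitehead argument). The compatibility check you flag as the main obstacle is the commutativity of the diagram that the paper asserts without further detail, so your write-up is, if anything, more explicit on the same point.
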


\begin{proof}
Let $K$ be a finite subgroup of $G$, and let us consider
restriction of $\Psi$ to the fixed point set of the group $K$. We
obtain the following diagram
$$\xymatrix{ \BB^K \ar[r]^{\Phi^K} & \MM^K \\
|\widetilde{\CC}_{G/K}| \ar[u]_\simeq \ar[d]^\simeq \ar[r] &
Maps(G/K \times_G
EG, BP\UU(\HH)) \ar[u]_\simeq  \ar[d]^\simeq\\
|\CC_K|  \ar[r]^(0.3){\simeq} & Maps(BK,BP\UU(\HH)) }$$ where the
top vertical arrows are homotopy equivalences induced by the
inclusion of the spaces associated to the orbit type $G/K$, the
bottom vertical arrows are homotopy equivalences induced by
restriction (see Corollary \ref{corollary local objects trivial
imply equivalence of categories}), and the bottom horizontal arrow
is a homotopy equivalence as it was shown in Corollary
\ref{corollary equivalence of homotopy type for G}.

Therefore $\Psi$ induces a homotopy equivalence for all finite
subgroups of $G$ which by Theorem 7.4 of \cite{DavisLueck} implies
that $\Psi$ is a $G$-equivariant homotopy equivalence.

\end{proof}

Note that from the construction of the functor $M$ and from
Corollary \ref{corollary equivalence of homotopy type for G} we
see that $\MM$ is a classifying space for the degree 3
cohomology of finite groups of $G$, namely, for all orbit types
$G/K$ for $K$ finite, we have the isomorphism in homotopy groups
$$\pi_i((Maps(G/K,\MM)^G) \cong H^{3-i}(BK,\integer).$$

\subsection{Classification of projective unitary stable equivariant
bundles for proper and discrete actions}
We have seen in \eqref{homotopy equivalence of B with Maps for K finite}  that the map
$\Psi : \BB \to Maps(EG, BP\UU(\HH))$ is a homotopy equivalence once restricted to the
fixed point set of a finite subgroup $K$ of $G$. This fact is precisely what is needed in order to prove the following theorem.
\begin{theorem}
For $X$ a proper $G$-space, the map $\Psi$ induces a bijective map between the
isomorphism classes of projective unitary stable $G$-equivariant
bundles over $X$ and the elements of the third $G$-equivariant cohomology group of
$X$, i.e.
$$\widetilde{\Psi}: \Bun_{st}^G(X,P\UU(\HH)) \stackrel{\cong}{\To} H^3(X \times_G EG; \integer).$$
Therefere, the isomorphism classes of projective unitary stable $G$-equivariant
bundles over $X$ are classified by the elements in $H^3(X \times_G EG, \integer)$.
\end{theorem}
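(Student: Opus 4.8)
The plan is to combine the homotopy-theoretic classification already established in this chapter with the standard Brown-representability/classifying-space machinery for proper $G$-spaces. First I would note that the previous results give us two independent handles on isomorphism classes: by the universality of the bundle $\EE \to \BB$ proved in Theorem \ref{theorem the universal bundle}, the set $\Bun_{st}^G(X,P\UU(\HH))$ is in natural bijection with the set of $G$-homotopy classes of $G$-maps $[X,\BB]^G$; and by Proposition \ref{proposition BB = MM} the $G$-space $\BB$ is $G$-homotopy equivalent to $\MM$, so $[X,\BB]^G \cong [X,\MM]^G$. Thus the entire problem reduces to identifying $[X,\MM]^G$ with $H^3(X\times_G EG;\integer)$.

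The key computation is to recognize $\MM$ as the equivariant classifying space for degree-three Borel cohomology. I would use the adjunction between $\_ \times_{\OO_G^P}\nabla$ (equivalently its homotopical version $\_ \times_{h\OO_G^P}\nabla$) and the fixed-point functor $\Phi$, as recorded in Theorem \ref{theorem Davis-Lueck}, to rewrite
$$[X,\MM]^G \cong [\Phi X, M]_{\OO_G^P}$$
where the right-hand side denotes $\OO_G^P$-homotopy classes of natural transformations. Since $M(G/K)=Maps(G/K\times_G EG, BP\UU(\HH))$ and $BP\UU(\HH)$ is a $K(\integer,3)$, for a fixed $X$ the value of $M$ on $\Phi X$ assembles the Borel cohomology of each orbit; the Davis--Lück framework then identifies the $\OO_G^P$-indexed homotopy classes of maps into $M$ with the Bredon-type cohomology built from the coefficient system $G/K \mapsto H^3((G/K\times_G EG);\integer)$. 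Concretely, one argues that because $M$ is a product over orbit types of Eilenberg--MacLane-type spaces with homotopy groups $\pi_i(Maps(G/K,\MM)^G)\cong H^{3-i}(BK,\integer)$ (as observed right after Definition \ref{definition category MM}), the space $\MM$ represents the functor $X \mapsto H^3(X\times_G EG;\integer)$ on proper $G$-$ANR$s.

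The cleanest route to assemble these orbit-wise identifications into a statement about $H^3(X\times_G EG;\integer)$ is to pass to the Borel construction directly. Applying $\_\times_G EG$ to the universal bundle yields the non-equivariant projective unitary bundle $\EE\times_G EG \to \BB\times_G EG$, classified by a map $\BB\times_G EG \to BP\UU(\HH)=K(\integer,3)$. Pulling back along the classifying map $X\times_G EG \to \BB\times_G EG$ of a given bundle produces a class in $H^3(X\times_G EG;\integer)$, which defines $\widetilde{\Psi}$. I would then show $\widetilde{\Psi}$ is a bijection by verifying it on the building blocks $G/K$ for $K$ finite, where it reduces to the isomorphism $\pi_0(Maps(G/K,\MM)^G)\cong H^3(BK,\integer)$ coming from Corollary \ref{corollary equivalence of homotopy type for G}, and then using the Davis--Lück gluing (Theorem 7.4 of \cite{DavisLueck}) to promote the orbit-wise bijections to a bijection over an arbitrary proper $G$-$ANR$ $X$ via a good $G$-cover and Mayer--Vietoris/exact-sequence comparison.

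The main obstacle I anticipate is the naturality and well-definedness of $\widetilde{\Psi}$ across changes of orbit type, i.e.~checking that the orbit-wise isomorphisms are compatible with the structure maps of $\OO_G^P$ so that they glue. The homotopy equivalence $\Psi:\BB\to\MM$ already packages this compatibility into a single $G$-equivariant statement, so the real work is to confirm that the target functor $X\mapsto H^3(X\times_G EG;\integer)$ is itself representable on proper $G$-$ANR$s by $\MM$ — this is where one must invoke that $X$ admits a good $G$-cover and that both sides send the relevant homotopy pushouts to the appropriate (Mayer--Vietoris) exact or limit diagrams. Once representability is in hand, $\widetilde{\Psi}$ is forced to be a bijection because two representing objects for the same set-valued functor agree on homotopy classes of maps.
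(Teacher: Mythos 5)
Your opening reduction is the same as the paper's: universality of $\EE \to \BB$ (Theorem \ref{theorem the universal bundle}) identifies $\Bun_{st}^G(X,P\UU(\HH))$ with $\pi_0(Maps(X,\BB)^G)$, and your definition of $\widetilde{\Psi}$ via the Borel construction of the universal bundle agrees with the paper's map $\Psi$. The gap is in the central step: the claim that $\MM$ (equivalently $\BB$) represents the functor $X \mapsto H^3(X\times_G EG;\integer)$ on proper $G$-spaces is essentially the whole content of the theorem, and your proposal asserts it rather than proves it. The two justifications you sketch do not hold up. First, $[\Phi X, M]_{\OO_G^P}$ is \emph{not} ``Bredon-type cohomology built from the coefficient system $G/K \mapsto H^3(BK;\integer)$'': the values $M(G/K) \simeq Maps(BK, K(\integer,3))$ have homotopy in degrees $0$ through $3$, and the functor this diagram represents is Borel cohomology, which differs from Bredon cohomology with any single coefficient system (the two are related by a spectral sequence, not an equality). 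More seriously, knowing the homotopy type of each value $M(G/K)$ --- even granting that each is a product of Eilenberg--MacLane spaces --- cannot determine the functor represented by $\MM$: the structure maps of the diagram over $\OO_G^P$ (its k-invariants) are essential, so ``the values have $\pi_i \cong H^{3-i}(BK;\integer)$, hence $\MM$ represents $H^3(-\times_G EG;\integer)$'' is a non sequitur. Second, your fallback Brown-representability/Mayer--Vietoris plan would require establishing half-exactness and Mayer--Vietoris sequences for the set-valued functor $\Bun_{st}^G$, compatibility of $\widetilde{\Psi}$ with the connecting maps, and an induction over a good $G$-cover, none of which is carried out; moreover Theorem 7.4 of \cite{DavisLueck} is not a gluing statement of that kind --- it says that a $G$-map inducing weak equivalences on fixed sets of the relevant subgroups is a $G$-weak equivalence, which is how the paper uses it in Proposition \ref{proposition BB = MM}, not a device for promoting orbit-wise bijections of set-valued functors through Mayer--Vietoris arguments.

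The paper closes exactly this gap with a short, concrete argument that your proposal misses: the exponential law. Since $G$ acts freely on $X \times EG$, one has
$$Maps(X, Maps(EG, BP\UU(\HH)))^G \cong Maps(X\times_G EG, BP\UU(\HH)),$$
and $\pi_0$ of the right-hand side is $H^3(X\times_G EG;\integer)$ because $BP\UU(\HH)$ is a $K(\integer,3)$. Thus Borel cohomology is tautologically represented by the honest $G$-space $Maps(EG, BP\UU(\HH))$; no representability machinery, good covers, or Mayer--Vietoris comparisons are needed. The only remaining step is that $\Psi \colon \BB \to Maps(EG,BP\UU(\HH))$ induces an equivalence $Maps(X,\BB)^G \to Maps(X, Maps(EG,BP\UU(\HH)))^G$, and this follows from \eqref{homotopy equivalence of B with Maps for K finite} ($\Psi^K$ is a homotopy equivalence for every finite $K$) together with the fixed-point-system adjunction of Theorem \ref{theorem Davis-Lueck}, because a proper $G$-space for discrete $G$ only probes fixed sets of finite subgroups via ${\rm Hom}_{\OO_G^P}(\Phi X, \Phi\,(-))$. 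If you wish to salvage your route through $\MM$, the repair is to observe that $M$ is literally the fixed-point system $\Phi(Maps(EG,BP\UU(\HH)))$, since $M(G/K) = Maps(G/K\times_G EG, BP\UU(\HH)) \cong Maps(G/K, Maps(EG,BP\UU(\HH)))^G$; the Davis--L\"uck adjunction then gives $[X,\MM]^G \cong [X, Maps(EG,BP\UU(\HH))]^G$ directly, and the exponential law finishes the proof.
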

\begin{proof}
Consider the following isomorphisms \begin{eqnarray*} \Bun_{st}^G(X,P\UU(\HH))& \cong &
\pi_0(Maps(X,\BB)^G)\\
&\cong &\pi_0(Maps(X,Maps(EG,BP\UU(\HH)))^G) \\
& \cong & \pi_0(Maps(X\times_G EG, BP\UU(\HH))) \\
& \cong & H^3(X \times_G EG, \integer).
 \end{eqnarray*} where the  isomorphism of the first line follows from Theorem
\ref{theorem the universal bundle}, the
isomorphism of the third line follows from the compact open
topology, and the isomorphism in the fourth line follows from the
fact that $BP\UU(\HH)$ is an Eilenberg-Maclane $K(\integer,3)$
space. We are left with the isomorphism of the second line.

In view of the results of section \ref{subsection system of
fix points}, let us consider the map
$${\rm Hom}_{\OO_G^P}(\Phi X, \Phi \BB) \to {\rm Hom}_{\OO_G^P}(\Phi X, \Phi Maps(EG,BP\UU(\HH))$$
induced by the map $\Psi$. At the orbit type $G/K$ with $K$ finite, we get that the map
$${\rm Maps}(X^K , \BB^K) \stackrel{\simeq}{\to} {\rm Maps}(X^K , Maps(EG,BP\UU(\HH)^K), \ \ f \mapsto \Psi^K \circ f$$
induces a homotopy equivalence since we know that the map $\Psi^K$ defined in
 \eqref{homotopy equivalence of B with Maps for K finite} induces a homotopy equivalence. Therefore we can conclude that
 the map 
 $${\rm Hom}_{\OO_G^P}(\Phi X, \Phi \BB) \stackrel{\simeq}{\To} {\rm Hom}_{\OO_G^P}(\Phi X, \Phi Maps(EG,BP\UU(\HH))$$
 is a homotopy equivalence, and this implies that the map
 $$Maps(X,\BB)^G \stackrel{\simeq}{\To} Maps(X,Maps(EG,BP\UU(\HH)))^G$$
 is also a homotopy equivalence.
This shows the isomorphism of the second  line. Hence we obtain the desired isomorphism 
$$\Bun_{st}^G(X,P\UU(\HH)) \cong H^3(X \times_G EG; \integer).$$
\end{proof}

\section{Twisted equivariant K-theory for proper actions, definition and properties}
\label{chapter Twisted equivariant K-theory}

In this chapter we show that the twisted equivariant K-theory defined
as the homotopy groups of bundles of Fredholm operators satisfies the axioms of 
an equivariant generalized cohomology theory. We first give the definition of the twisted equivariant K-theory for proper actions of Lie groups
twisted by equivariant projective unitary bundles, and then we show that these groups satisfy the axioms of a
generalized equivariant cohomology theory in the sense of
\cite{Lueck1}, namely that this generalized cohomology theory is endowed with induction and restriction structures.

\subsection{Representing space for K-theory}

To define the twisted equivariant K-theory we need to use the
appropriate representing space of K-theory defined by Fredholm
operators, with the extra property that the group $P\UU(\HH)$
endowed with the compactly generated compact open topology acts
continuously on it by conjugation.

The space $\Fred(\HH)$ of Fredholm operators on the Hilbert space
$\HH$ endowed with the norm topology is a representing space for
K-theory, but the group $P\UU(\HH)$ with the compactly generated
compact open topology does not act continuously on $\Fred(\HH)$.
Atiyah and Segal in \cite{AtiyahSegal} construct an alternative
space $\Fred'(\HH)$ representing K-theory by Fredholm operators
and on which $P\UU(\HH)$ acts continuously. Let us recall the
definition of $\Fred'(\HH)$.

\begin{definition}\cite[Chapter 3]{AtiyahSegal} Let $\Fred'(\HH)$
consist of pairs $(A,B)$ of bounded operators on $\HH$ such that
$AB -1$ and $BA -1$ are compact operators. Endow $\Fred'(\HH)$
with the topology induced by the embedding \begin{eqnarray*}
\Fred'(\HH) & \to & {\mathsf{B}}(\HH) \times  {\mathsf{B}}(\HH)
\times {\mathsf{K}}(\HH)
\times {\mathsf{K}}(\HH) \\
(A,B) & \mapsto & (A,B,AB-1, BA-1)
\end{eqnarray*}
where ${\mathsf{B}}(\HH)$ is the bounded operators on $\HH$ with
the compact open topology and ${\mathsf{K}}(\HH)$ is the compact
operators with the norm topology.
\end{definition}

Proposition 3.1 of \cite{AtiyahSegal} shows that $\Fred'(\HH)$ is
a representing space for K-theory and moreover that $\UU(\HH)_{c.o.}$
with the  compact open topology acts
continuously on $\Fred'(\HH)$ by conjugation. It is a simple exercise in topology
to show that the continuity of the $\UU(\HH)_{c.o.}$ action implies the
continuity of the $\UU(\HH)_{c.g.}$ action; therefore we can conclude that the
group $P\UU(\HH)$ acts continuously on $\Fred'(\HH)$ by
conjugation.

Let us choose the identity operator $({\rm{Id}},{\rm{Id}})$ as the
base point in $\Fred'(\HH)$.

\subsection{Definition of Twisted Equivariant K-theory for proper actions}

Let $X$ be a proper $G$ space and $P \to X$ a projective unitary
stable $G$-equivariant bundle over $X$. Recall that the space of
Fredholm operators defined above is endowed with a continuous
right action of the group $P\UU(\HH)$ by conjugation, therefore we
could take the associated bundle over $X$
$$\Fred(P) := P \times_{P\UU(\HH)} \Fred'(\HH)$$
 with fibres $\Fred'(\HH)$ with the induced $G$ action given by
 $$g \cdot [(\lambda, (A,B))] := [(g \lambda, (A,B))]$$
for $g$ in $G$, $\lambda$ in $P$ and $(A,B)$ in $\Fred'(\HH)$.

Denote by $$\Gamma(X; \Fred(P))$$ the space of sections of the
bundle $\Fred(P) \to X$ and choose as base point in this space the
section which chooses the base point on each fiber. This section
exists because the $P\UU(\HH)$ action on $({\rm{Id}},{\rm{Id}})$
is trivial, and therefore $$X \cong P/P\UU(\HH) \cong P
\times_{P\UU(\HH)} \{({\rm{Id}},{\rm{Id}}) \} \subset \Fred(P);$$
let us denote this {\it{identity section}} by $s$.

 The group $G$ acts on
$\Gamma(X; \Fred(P))$ in the natural way, namely for $g \in G$ and
$\sigma$ a section $(g \cdot \sigma)(x) := g \sigma( g^{-1}x)$ and
therefore the fixed point set
$$\Gamma(X; \Fred(P))^G$$
is precisely the space of $G$-equivariant sections. Note that the
base point in $\Gamma(X; \Fred(P))$ is fixed by $G$ because the
identity operators commute with all operators, and therefore the
space of $G$-equivariant sections has also a base point.

\begin{definition} \label{definition K-theory of X,P}
  Let $X$ be a connected $G$-space and $P$ a projective unitary stable
  $G$-equivariant bundle over $X$. The {\it{Twisted $G$-equivariant
  K-theory}} groups of $X$ twisted by $P$ are defined as
  $$K^{-n}_G(X;P) := \pi_n \left( \Gamma(X;\Fred(P))^G, s \right)$$
  where the base point $s$ is the identity section.
\end{definition}

For an inclusion $j: A \to X$ of $G$-spaces we have a restriction
map on invariant sections
$$\Gamma(X; \Fred(P))^G \to \Gamma(A; \Fred(P|_A))^G$$
which induces homomorphisms at the level of the homotopy groups,
hence homomorphisms in the twisted equivariant K-theories
$$j^* : K^*_G(X;P) \to K^*_G(A;P|_A).$$

The relative K-theory groups for the pair $(X,A)$ twisted by the
bundle $P$ over $X$, whenever $X$ is connected, will be defined as
the homotopy groups of the homotopy fiber of the restriction map
$$j^*:\Gamma(X; \Fred(P)) \to \Gamma(A; \Fred(P|_A)).$$

The homotopy fiber of the restriction map can be written in terms
of the mapping cylinder of the inclusion $j:A \to X$
$${\rm{Cyl}}(X,A) = (X \sqcup A \times [0,1]) / a \sim (a,0) \ \  \forall a \in A,$$
as the space of relative sections
$$\Gamma({\rm{Cyl(X,A)}},A\times\{1\};\Fred({\rm{Cyl}}(P,P|_A)))$$
where the relative sections for an inclusion $B \to Y$ are defined
as
$$\Gamma(Y,B;\Fred(Q)) := \{ \sigma \in \Gamma(Y; \Fred(Q)) \colon \sigma|_B = s \}.$$

 \begin{definition} \label{definition relative twisted k-theory groups} The {\it{Relative Twisted Equivariant K-theory
 groups}} of the triple $(X,A;P)$ are the groups
 $$K^{-n}_G(X,A;P) := \pi_n \left(\Gamma({\rm{Cyl(X,A)}},A\times \{1\};\Fred({\rm{Cyl}}(P,P|_A)))^G
 \right).$$

If the space $X$ is a disjoint union of $G$-spaces
$X=\bigsqcup_\alpha
 X_\alpha$, we define the Twisted Equivariant K-theory groups as
 $$K^{-n}_G(X, A;P) := \prod_\alpha K^{-n}_G(X_\alpha, A \cap X_\alpha;P|_{X_\alpha}).$$

\end{definition}

\subsection{Properties of the Twisted Equivariant K-theory groups}

\subsubsection{Functoriality.} Let us consider the category ${\it
prop}\; G-\mathcal{CW}^{2}_{\rm twist}$ whose objects are triples
$(X,A;P)$ consisting of an inclusion $A \to X$ of proper $G$-CW
spaces, together with a projective unitary stable $G$-equivariant
bundle $P \to X$, and whose morphisms $f : (Y,B;Q) \to (X,A; P)$
consist of  $G$ equivariant maps $f : Q \to P$ of principal
$P\UU(\HH)$-bundles, inducing an equivariant map
$\overline{f}:(Y,B) \to (X,A)$ on the bases such that the diagram
$$\xymatrix{ Q \ar[r]^f \ar[d] & P \ar[d] \\
Y \ar[r]^{\overline{f}} & X }$$ is a pullback diagram

Then the map $f$ induces a pullback diagram at the level of the
associated bundles
$$\xymatrix{ \Fred(Q) \ar[r]^f \ar[d] & \Fred(P) \ar[d] \\
Y \ar[r]^{\overline{f}} & X }$$ which implies that any section on
$\Fred(P)$ defines a unique section on $\Fred(Q)$. Thus the map
$f$ induces an equivariant map at the level of sections
$$f^\#: \Gamma(X;\Fred(P)) \to \Gamma(Y;\Fred(Q))$$
and therefore $f$ induces a homomorphism at the level of the
homotopy groups of the $G$-invariant sections
$$f^*: \pi_n \left( \Gamma(X;\Fred(P))^G \right) \to \pi_n \left( \Gamma(Y;\Fred(Q))^G \right)$$
which produces the desired homomorphism in Twisted K-theory groups
$$ f^*: K^{-n}_G(X;P) \to K^{-n}_G(Y; Q).$$

The relative case follows the same principle as we have the
following pullback diagram
$$\xymatrix{ \Fred({\rm{Cyl}}(Q,Q|_B)) \ar[r]^f \ar[d] & \Fred({\rm{Cyl}}(P,P|_A)) \ar[d] \\
{\rm{Cyl}}(Y,B) \ar[r] & {\rm{Cyl}}(X,A)}$$ which induces an
equivariant map in relative sections
$$\Gamma({\rm{Cyl(X,A)}},A;\Fred({\rm{Cyl}}(P,P|_A)))
\to \Gamma({\rm{Cyl(Y,B)}},B;\Fred({\rm{Cyl}}(Q,Q|_B)))$$ inducing
the desired homomorphism in twisted equivariant K-theory groups
$$ f^*: K^{-n}_G(X,A;P) \to K^{-n}_G(Y,B; Q).$$

 This implies that the twisted equivariant K-theory groups provide a functor
from ${\it prop}\; G-\mathcal{CW}^{2}_{\rm twist}$ to graded
abelian groups
$$K_G^{*}: {\it prop}\; G-\mathcal{CW}^{2}_{\rm
twist} \to \mbox{Graded abelian groups}.$$

\subsubsection{Twisted Equivariant K-theory as a generalized cohomology theory.}
The twisted equivariant K-theory groups satisfy the axioms of a
generalized cohomology theory.

\begin{itemize}
\item{\bf{Homotopy axiom:}} Two morphisms $f_0, f_1 :(Y,B;Q) \to
(X,A;
  P)$ in ${\it prop}\; G-\mathcal{CW}^{2}_{\rm
twist}$ are homotopy equivalent if there exists a left
$G$-equivariant and right $P\UU(\HH)$ equivariant homotopy $F: Q
\times I \to P$ such that $F(\_,0) = f_0$ and $F(\_,1) = f_1$, and
moreover that the diagram
$$\xymatrix{ Q\times I \ar[r]^F \ar[d] & P \ar[d] \\
Y \times I \ar[r]^{\overline{F}} & X }$$ is a pullback square.

The fact that the square above is a pullback square implies that
there is an induced map on relative sections
\begin{align*}
\Gamma({\rm{Cyl(X,A)}},A;\Fred({\rm{Cyl}}(P,P|_A)))  \to & \\
 \Gamma({\rm{Cyl(Y,B)}} \times I, &B \times
I; \Fred({\rm{Cyl}}(Q,Q|_B)\times I)) \end{align*}
 whose adjoint can be seen as the desired homotopy
$$\Gamma(X,A; \Fred(P))\times I \stackrel{F^\#}{\to} \Gamma({\rm{Cyl(Y,B)}} , B
; \Fred({\rm{Cyl}}(Q,Q|_B)))$$ between the induced maps $f_0^\#$
and $f_1^{\#}$.

Therefore the homomorphisms $$f_0^*,f_1^*:K^{-n}_G(X,A;P) \to
K^{-n}_G(Y,B;Q)$$ are equal.

\item{\bf{Additivity axiom:}} If the space $X$ is a disjoint union of $G$-spaces
$X=\bigsqcup_\alpha
 X_\alpha$,  we have setup in Definition
\ref{definition relative twisted k-theory groups} that the Twisted Equivariant K-theory groups are
 $$K^{-n}_G(X, A;P) := \prod_\alpha K^{-n}_G(X_\alpha, A \cap X_\alpha;P|_{X_\alpha});$$
this is the additivity axiom.
\item{\bf{Excision axiom:}}
 Let  $Z\subset X$  be  an  open, $G$-invariant  subset  such that the  closure
  of  $Z$  is  contained  in the  interior  of  $A$.  Then  the  restriction
    map induced by $(X-Z, A-Z)\to (X,A)$ induces  a homeomorphism of spaces
    of relative sections
    \begin{align}\Gamma({\rm{Cyl}}(X,A),A \times I;\Fred({\rm{Cyl}}(P,P|_A)))
    \stackrel{\cong}{\to} & \nonumber \\
    \Gamma({\rm{Cyl}}(X-Z,A-Z),(A-Z)  \times
    I; & \Fred({\rm{Cyl}}(P|_{X-Z},P|_{X-A})))
    \label{homeomorphism for relative sections}
    \end{align}
because any section $\sigma$ of the bundle
$\Fred({\rm{Cyl}}(P|_{X-Z},P|_{X-A}))$ which restricts to the base
point in $(A-Z) \times I$,  can be uniquely extended to a section
$\overline{\sigma}$ in $\Fred({\rm{Cyl}}(P,P|_A))$ by defining
$\overline{\sigma}_{A \times I}:=s$ and
$\overline{\sigma}|_{{\rm{Cyl}}(P|_{X-Z},P|_{X-A})}:=\sigma$.

Now, since the inclusion of $A$ into $X$ is a $G$-cofibration
the inclusion of pairs of spaces  $$({\rm{Cyl}}(X,A),A
\times\{1\}) \to ({\rm{Cyl}}(X,A),A \times I)$$ is a $G$-homotopy
equivalence, and therefore the induced map on the spaces of
relative sections is also a $G$-homotopy equivalence,
\begin{align*}\Gamma({\rm{Cyl}}(X,A),A \times I;\Fred({\rm{Cyl}}(P,P|_A)))
    \stackrel{\simeq}{\to} & \\
\Gamma({\rm{Cyl}}   (X,A),A \times \{1\}; &
\Fred({\rm{Cyl}}(P,P|_A))).\end{align*}

The previous argument can also be carried out for the pair $(X-Z,
A-Z)$ yielding a $G$-homotopy equivalence
 \begin{align*}\Gamma({\rm{Cyl}}(X-Z,A-Z),(A-Z)  \times
    I;  \Fred({\rm{Cyl}}(P|_{X-Z},P|_{X-A})))
    \stackrel{\simeq}{\to} & \\
    \Gamma({\rm{Cyl}}(X-Z,A-Z),(A-Z)  \times
    \{1\};  \Fred({\rm{Cyl}}(P|_{X-Z}, P|_{X-A} & ))).\end{align*}
    Therefore, the homotopies outlined above, together with the homeomorphism  of (\ref{homeomorphism for relative
    sections}) implies that there is an isomorphism of relative
    groups
$$K_{G}^{n}(X,A;P) \cong K_{G}^{n}(X-Z, A-Z; P|_{X-Z}).$$

 \item{\bf{Long Exact Sequence axiom for pairs:}} We have defined
the relative twisted equivariant K-theory groups as the homotopy
groups of the relative sections on the mapping cylinder
$$K^{-n}_G(X,A;P) := \pi_n \left(\Gamma({\rm{Cyl(X,A)}},A\times \{1\};\Fred({\rm{Cyl}}(P,P|_A)))^G
 \right).$$

The relative sections of the mapping cylinder is weakly
homotopicaly equivalent to the homotopy fiber of the restriction
map
$$\Gamma(X, \Fred(P)) \to \Gamma(A; \Fred(P|_A)).$$
Therefore  the long exact sequence on homotopy groups induces the
long exact sequence for the twisted equivariant K-theory groups
$$ \to K_G^n(X,A;P) \to K_G^n(X;P) \to K_G^n(A,P|_A) \to K^{n+1}(X,A;P) \to.$$

\end{itemize}

\subsubsection{Bott periodicity.}

The existence of a homotopy equivalence
$$\Fred'(\HH) \stackrel{\simeq}{\to} \Omega^2 \Fred'(\HH)$$
proven  in \cite[Theorem 5.1]{atiyahsingerskew},
yields isomorphisms
$$K^{-n}_G(X,A;P) \stackrel{\cong}{\to} K^{-n-2}_G(X,A;P)$$
which makes the twisted equivariant K-theory groups into a
$\integer$-graded 2-periodic cohomology theory if we define the positive
twisted equivariant K-theory groups by the information on $K^0$
and $K^{-1}$; namely, for $p>0$ we define $$K_G^p(X,A;P) = \left\{
\begin{array}{ccl}
K_G^0(X,A;P) & {\rm {if}} & p \ \mbox{ is even}\\
K_G^{-1}(X,A;P) & {\rm {if}} & p \ \mbox{ is odd.}\\
\end{array}\right.$$

\subsubsection{ Twisted Equivariant K-theory over $G/K$.}
Let $P \to G/K$ be a projective unitary stable $G$-equivariant
bundle over $G/K$ for $K$ finite subgroup of $G$, and recall that
$$P \cong P\UU(\HH) \times_K G$$
as equivariant bundles where $K$ acts on $P\UU(\HH)$ by the stable
homomorphism $f:K \to P\UU(\HH)$.

Therefore we have the index map
$$\pi_0\left( \Gamma(G/K;\Fred(P) \right) = \pi_0 \left( \Fred'(\HH)^K
\right) \stackrel{\cong}{\to} R_{S^{1}}(\widetilde{K})$$ where the
second is obtained by the index map and $R_{S^{1}}(\widetilde{K})$
denotes the Grothendieck group of the semi-group $\widetilde{K}=f^*\UU(\HH)$
of isomorphism classes representations on which $S^1= Ker(\widetilde{K} \to K)$ acts by
multiplication. The index map is an isomorphism. It surjective because the $K$ action
on $P\UU(\HH)$ is stable, and the injectivity follows from the
$G$-equivariant contractibility of $\UU(\HH)$.

We can conclude that the twisted equivariant K-theory groups for
the orbit type $G/K$ twisted by $P \to G/K$ are:
$$K_G^0(G/K;P) \cong R_{S^{1}}(\widetilde{K}), \ \ \ \
K^{-1}_G(G/K;P)=0.$$

\subsubsection{Induction structure.}
The twisted equivariant K-theory groups for proper actions can be
endowed with an {\it{Induction structure}} as it is defined in
Section 1 of \cite{Lueck1}. Let $\alpha: H \to G$ be a group
homomorphism  and $X$ be a $H$-proper space such that
$ker(\alpha)$ acts freely on $X$. Let us denote by $X
\times_\alpha G$ the quotient space $(X \times G)/H$ where the
action is defined by $$H \times (X \times G ) \to X \times G \ \ \
\ \ h(x,l) \mapsto (hx,l\alpha(h)^{-1}),$$ and endow the space $X
\times_\alpha G$ with the left $G$ action defined by $g[x,l]:=
[x,gl]$.  Then we must show that there exists natural graded
isomorphisms
$${\rm{ind}}_\alpha: K^*_H(X,A;P) \stackrel{\cong}{\to} K_G^*(X
\times_\alpha G, A\times_\alpha G ;P \times_\alpha G)$$ which are
functorial with respect to homomorphisms of groups $\beta:G \to K$
on which $ker(\beta)$ acts trivially, that induce isomorphisms at
the level of the long exact sequences of the pairs $(X,A)$ and $(X
\times_\alpha G,A \times_\alpha G)$, and moreover that are
compatible with respect to conjugation; these three conditions
will follow from the following lemmas.

\begin{lemma} \label{lemma induction subgroup}
Let $H \subset G$, $X$ be a proper $H$-space and $P$ a projective
unitary stable $H$-equivariant bundle over $X$. Then spaces of
invariant sections
$$\Gamma(X,\Fred(P))^H \cong \Gamma(X \times_H G, \Fred(P
\times_H G))^G$$ are homeomorphic, and therefore we have an
isomorphism
$${\rm{ind}}_H^G : K^*_H(X,P) \stackrel{\cong}{\to} K_G^*(X
\times_H G, \Fred(P \times_H G)).$$
\end{lemma}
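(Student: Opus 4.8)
The plan is to realize $X \times_H G$ as the space induced up from the $H$-space $X$ and to exploit the induction--restriction adjunction at the level of sections. First I would introduce the $H$-equivariant embedding $\iota \colon X \to X \times_H G$, $x \mapsto [x,e]$, where $H \subset G$ acts on $X \times_H G$ by restricting the left $G$-action $g\cdot[x,l]=[x,gl]$. A direct check gives $h\cdot[x,e]=[x,h]=[hx,e]=\iota(hx)$, so $\iota$ is $H$-equivariant; moreover every point satisfies $[x,l]=l\cdot\iota(x)$, so $\iota(X)$ meets every $G$-orbit, and the quotient map $q\colon X\times G\to X\times_H G$ is a principal $H$-bundle. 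Restriction along $\iota$ will be the candidate for the inverse of the induction homeomorphism.

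Next I would identify the pulled-back bundle. Since $p\mapsto[p,e]$ is a $P\UU(\HH)$-equivariant isomorphism $P_x\stackrel{\cong}{\to}(P\times_H G)_{[x,e]}$ intertwining the $H$-actions (again $h\cdot[p,e]=[hp,e]$), it follows that $\iota^{*}(P\times_H G)\cong P$ as $H$-equivariant principal $P\UU(\HH)$-bundles, hence $\iota^{*}\Fred(P\times_H G)\cong\Fred(P)$ as $H$-equivariant bundles. Restriction of sections therefore gives a map $\Gamma(X\times_H G,\Fred(P\times_H G))^{G}\to\Gamma(X,\Fred(P))^{H}$, $\sigma\mapsto\sigma\circ\iota$. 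For the inverse I would send an $H$-invariant section $\tau$ to the section $\hat\tau([x,l]):=l\cdot\tau(x)$, using the $G$-action on $\Fred(P\times_H G)$ to move $\tau(x)\in\Fred(P\times_H G)_{[x,e]}$ into the fibre over $[x,l]$. The key point is that the relation $[hx,lh^{-1}]=[x,l]$ forces the well-definedness of $\hat\tau$ to be exactly equivalent to the identity $\tau(hx)=h\cdot\tau(x)$, i.e.~to the $H$-invariance of $\tau$; once well-defined, $\hat\tau$ is $G$-equivariant by construction. One then checks directly that restriction and extension are mutually inverse and that both send the identity section to the identity section, so they are based bijections.

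The remaining, and main, point is to upgrade this bijection to a homeomorphism of the based section spaces in the compact--open topology. Restriction $\sigma\mapsto\sigma\circ\iota$ is continuous as precomposition. For the continuity of the extension $\tau\mapsto\hat\tau$ I would use that $q\colon X\times G\to X\times_H G$ is a principal $H$-bundle, hence an open quotient map, together with continuity of the $G$-action, writing $\hat\tau\circ q\colon(x,l)\mapsto l\cdot\tau(x)$ as the composite of the continuous action map with $\tau$; this exhibits $\hat\tau$ as the section-level shadow of the standard induction--restriction adjunction $\Map(X\times_H G,-)^{G}\cong\Map(X,\mathrm{res}\,-)^{H}$, which is a homeomorphism in the compact--open topology. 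I expect this topological verification to be the only genuine obstacle, everything else being formal. A based homeomorphism of section spaces induces isomorphisms on all homotopy groups, giving $K^{-n}_{H}(X;P)\cong K^{-n}_{G}(X\times_H G;P\times_H G)$ for every $n\ge 0$, and Bott periodicity then yields the graded isomorphism $\mathrm{ind}_H^{G}$ in all degrees.
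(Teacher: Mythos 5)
Your proposal is correct and follows essentially the same route as the paper: the paper identifies $\Fred(P)\times_H G$ with $\Fred(P\times_H G)$ and shows that restriction to the canonical copy of $X\cong X\times_H H$ inside $X\times_H G$ is a homeomorphism from $G$-invariant sections to $H$-invariant sections, which is exactly your restriction-along-$\iota$ / extension-via-the-$G$-action adjunction. Your write-up merely makes explicit the well-definedness and continuity checks that the paper leaves implicit.
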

 \begin{proof}
Since the left $H$-action commutes with the right $P\UU(\HH)$
action on $P$ we have that \begin{eqnarray*}\Fred(P)\times_H G & =
& \left(\Fred'(\HH) \times_{P\UU(\HH)} P \right)\times_H G  \\&= &
\Fred'(\HH) \times_{P\UU(\HH)} \left( P \times_H G \right) \\ & =
& \Fred(P \times_H G).
\end{eqnarray*}

The $G$-invariant sections in $\Fred(P) \times_H G$ are determined
uniquely by the $H$ invariant sections of $\Fred(P) \times_H H$
and therefore the restriction map induces a homeomorphism
$$R:\Gamma(X \times_H G, \Fred(P)
\times_H G)^G \stackrel{\cong}{\to} \Gamma(X\times_H H,\Fred(P)
\times_H H)^H.$$

 Now, the canonical map $X \to X \times_H H$, $x \mapsto [(x,1)]$ is an
 $H$-equivariant homomorphism and it induces an
 homeomorphism
 $$ \phi:\Gamma(X, \Fred(P))^H \stackrel{\cong}{\to} \Gamma(X\times_H H,\Fred(P) \times_H
 H)^H.$$

 On the level of homotopy groups the homeomorphism $$ \phi^{-1} \circ R: \Gamma(X, \Fred(P))^H \stackrel{\cong}{\to}
  \Gamma(X \times_H G, \Fred(P)
\times_H G)^G$$ induces the desired isomorphism
$${\rm{ind}}_H^G : K^*_H(X,P) \stackrel{\cong}{\to} K_G^*(X
\times_H G,P \times_H G).$$

 \end{proof}

\begin{lemma} \label{lemma induction normal free action}
Let $X$ be a proper $H$-space together and $P$ a projective
unitary stable $H$-equivariant bundle over $X$. Let $N \subset H$
be a normal subgroup of $H$ acting freely on $X$. Then there is a
canonical homeomorphism between the spaces of invariant sections
$$  \Gamma(X,\Fred(P))^H \cong  \Gamma(X/N , \Fred(P/N))^{H/N}   $$
which induces an isomorphism
$${\rm{inv}}_H^N:     K^*_H(X,P) \stackrel{\cong}{\to} K_{H/N}^*(X/N
,P/N).$$
\end{lemma}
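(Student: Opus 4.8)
The plan is to mirror the strategy of Lemma \ref{lemma induction subgroup}, but now descending along the free $N$-action rather than inducing up. The whole statement will follow once we establish a natural homeomorphism $\Gamma(X,\Fred(P))^N \cong \Gamma(X/N,\Fred(P/N))$ and then take $H/N$-fixed points.

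First I would identify the Fredholm bundle on the quotient. Since the left $N$-action on $P$ commutes with the right $P\UU(\HH)$-action, the quotient by $N$ passes through the associated-bundle construction:
$$\Fred(P)/N = \left(P \times_{P\UU(\HH)} \Fred'(\HH)\right)/N \cong (P/N) \times_{P\UU(\HH)} \Fred'(\HH) = \Fred(P/N).$$
Hence $\Fred(P/N)$ is the bundle over $X/N$ obtained by collapsing the free $N$-action, and equivalently it is the bundle whose pullback along the quotient map $q : X \to X/N$ recovers $\Fred(P)$. Here I use that $N$ acts freely on $X$, and that the $N$-action is automatically proper since $X$ is a proper $H$-space, so $q$ is a principal $N$-bundle; in particular the fiber identification $(P/N)_{[x]} \cong P_x$ holds canonically for each representative $x \in q^{-1}([x])$.

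Next I would set up the descent of sections. Because $\Fred(P/N)$ is pulled back from $X/N$ along $q$, an $N$-equivariant section $\sigma$ of $\Fred(P) \to X$ descends to the section $\bar\sigma([x]) := [\sigma(x)]$ of $\Fred(P/N) \to X/N$, which is well defined precisely by the $N$-equivariance of $\sigma$; conversely every section of $\Fred(P/N)$ lifts uniquely to an $N$-equivariant section of $\Fred(P)$ via the fiberwise isomorphism $\Fred(P)_x \stackrel{\cong}{\to} \Fred(P/N)_{[x]}$. This yields the natural homeomorphism $\Gamma(X,\Fred(P))^N \cong \Gamma(X/N,\Fred(P/N))$ for the compact-open topologies. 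Since $N$ is normal in $H$, the group $H$ preserves the subspace of $N$-invariant sections and the residual $H/N$-action is defined on $X/N$, on $P/N$, and on $\Fred(P/N)$; the displayed homeomorphism is manifestly $H/N$-equivariant, so taking $H/N$-fixed points and using $\Gamma(X,\Fred(P))^H = (\Gamma(X,\Fred(P))^N)^{H/N}$ gives the desired homeomorphism
$$\Gamma(X,\Fred(P))^H \cong \Gamma(X/N,\Fred(P/N))^{H/N}.$$

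Finally, I would note that this homeomorphism carries the identity section to the identity section, so passing to homotopy groups based at $s$ produces the isomorphism ${\rm{inv}}_H^N : K^*_H(X,P) \stackrel{\cong}{\to} K^*_{H/N}(X/N,P/N)$; the relative case for a pair $(X,A)$ follows verbatim by replacing $X$ with the mapping cylinder ${\rm{Cyl}}(X,A)$ and $P$ with ${\rm{Cyl}}(P,P|_A)$. The main obstacle will be the descent step: one must check carefully that the free and proper $N$-action makes $q$ into a bundle projection along which $\Fred(P)$ is pulled back, so that the fiberwise maps $\Fred(P)_x \to \Fred(P/N)_{[x]}$ are genuine homeomorphisms and assemble into a homeomorphism of section spaces rather than a mere bijection.
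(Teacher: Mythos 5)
Your proposal is correct and follows essentially the same route as the paper's proof: both rest on the staged fixed-point decomposition $\Gamma(X,\Fred(P))^H = \left(\Gamma(X,\Fred(P))^N\right)^{H/N}$, descent of $N$-invariant sections along the free (and proper) $N$-action, and the identification $\Fred(P)/N \cong \Fred(P/N)$ coming from the commutation of the left $H$-action with the right $P\UU(\HH)$-action. You merely perform these steps in a slightly different order and with more detail (principal-bundle structure of $X \to X/N$, $H/N$-equivariance, preservation of the identity section), all of which the paper leaves implicit.
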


\begin{proof}
We have the following homeomorphisms
\begin{eqnarray*}
\Gamma(X,\Fred(P))^H & = &  \left( \Gamma(X,\Fred(P))^N
\right)^{H/N} \\
& \cong &  \Gamma(X/N,\Fred(P)/N)^{H/N} \\
& \cong &  \Gamma(X/N,\Fred(P/N))^{H/N}
\end{eqnarray*}
where the homeomorphism from the first line to the second follows
from the fact that $N$ acts freely on $X$, and the homeomorphism
from the second line to the third follows from the facts that $N$
acts freely on $P$ and that the $H$ action commutes with the
$P\UU(\HH)$-action.

The composition of the homeomorphisms above induces the desired
isomorphism in the twisted K-theory groups
$${\rm{inv}}_H^N:     K^*_H(X,P) \stackrel{\cong}{\to} K_{H/N}^*(X/N
,P/N).$$

\end{proof}

For a homomorphism $\alpha : H \to G$ such that $N := ker(\alpha)$
acts freely on the $H$-proper space $X$ we have the following
diagram of homomorphisms
$$\xymatrix{ H \ar[rr]^\alpha \ar[rd]^p & & G \\
& H/N \ar@{^{(}->}[ru]^{\overline{\alpha}}}$$ which induce the
following homeomorphisms
$$\xymatrix{ \Gamma(X, \Fred(P))^H  \ar[d]^\cong\\
 \Gamma(X/N,\Fred(P/N))^{H/N} \ar[d]^\cong \\
\Gamma(X/N \times_{H/N} G, \Fred(P/N \times_{H/N} G))^G \ar[d]^\cong \\
\Gamma(X \times_{\alpha} G, \Fred(P \times_{\alpha} G))^G}
$$ where
the first comes from Lemma \ref{lemma induction normal free
action}, the second from Lemma \ref{lemma induction subgroup} and
the third from the canonical $G$-equivariant homeomorphism
$$X \times_\alpha G \stackrel{\cong}{\to} X/N \times_{H/N} G,  \ \
[(x,g)] \mapsto [([x],g)].$$

The compositions of the homeomorphisms described above gives us
the desired isomorphism
$${\rm{ind}}_\alpha : = {\rm{ind}}_{H/N}^G \circ {\rm{inv}}^N_H :
K^*_H(X;P) \stackrel{\cong}{\to} K_G^*(X\times_\alpha G,
P\times_\alpha G).$$

Since the induction structure comes from explicit homeomorphisms
at the level of invariant sections, we claim that the twisted
equivariant K-theory groups possess an induction structure as it
is defined in Section 1 of \cite{Lueck1}. We will not reproduce
the proofs here.

From Lemma \ref{lemma induction normal free action} we also obtain the
following relation between the twisted equivariant K-theory
groups and non-equivariant twisted K-theory groups.

\begin{cor}
Let $X$ be a free $G$ space and let $P$ be a projective unitary
stable $G$-equivariant bundle over $X$. Then there a canonical
isomorphism
$$K_G^*(X;P) \stackrel{\cong}{\to} K^*(X/G;P/G)$$
between the $P$-twisted equivariant K-theory groups of $X$ and the
$P/G$-twisted K-theory groups of $X/G$.
\end{cor}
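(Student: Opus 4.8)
The plan is to obtain this corollary as the special case $N=H=G$ of Lemma \ref{lemma induction normal free action}. That lemma produces, for any normal subgroup $N$ of $H$ acting freely on a proper $H$-space $X$, a canonical homeomorphism of invariant section spaces $\Gamma(X,\Fred(P))^H \cong \Gamma(X/N,\Fred(P/N))^{H/N}$, and hence an isomorphism ${\rm{inv}}_H^N\colon K^*_H(X;P)\stackrel{\cong}{\to}K^*_{H/N}(X/N;P/N)$. Since the hypothesis that $X$ is a free $G$-space says precisely that $N=G$ acts freely on $X$, and since $G$ is trivially a normal subgroup of $H=G$, the lemma applies verbatim with this choice.

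First I would record the identifications produced by this specialization: $H/N=G/G$ is the trivial group, $X/N=X/G$, and $P/N=P/G$. Because the left $G$-action on $P$ commutes with the right $P\UU(\HH)$-action and is free (as $X$ is free), the quotient $P/G\to X/G$ is again a principal $P\UU(\HH)$-bundle and the local stability condition descends, so $P/G$ is an ordinary projective unitary stable bundle over $X/G$. Thus $P/G$ is a legitimate twist for the non-equivariant twisted K-theory of the Introduction.

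The one conceptual point to observe is that equivariant twisted K-theory for the trivial group coincides with the non-equivariant theory: for the trivial group the $G$-invariance condition on sections is vacuous, so $\Gamma(X/G,\Fred(P/G))^{\{e\}}=\Gamma(X/G,\Fred(P/G))$, and Definition \ref{definition K-theory of X,P} then reads $K^*_{\{e\}}(X/G;P/G)=\pi_*\!\left(\Gamma(X/G,\Fred(P/G)),s\right)=K^*(X/G;P/G)$. Feeding these identifications into ${\rm{inv}}_G^G$ yields the canonical isomorphism $K^*_G(X;P)\stackrel{\cong}{\to}K^*(X/G;P/G)$, as desired. There is no real obstacle here beyond recognizing that the whole group may serve as the free normal subgroup; the naturality of the homeomorphism of invariant sections and the passage to homotopy groups are exactly those already established in Lemma \ref{lemma induction normal free action}.
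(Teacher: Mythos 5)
Your proposal is correct and follows exactly the paper's route: the paper presents this corollary as an immediate consequence of Lemma \ref{lemma induction normal free action}, obtained by specializing to $N=H=G$ so that $H/N$ is trivial, $X/N=X/G$, and $P/N=P/G$. Your additional remarks --- that $P/G$ descends to a projective unitary stable bundle over $X/G$ and that equivariant twisted K-theory for the trivial group coincides with the non-equivariant theory --- are precisely the implicit identifications the paper leaves to the reader.
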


\subsubsection{Discrete torsion twistings.}
In this last section we would like to describe the relation
between the twisted equivariant K-theory groups defined in this
paper and the twisted equivariant K-theory groups defined via
projective representations characterized by discrete torsion as it
is defined in \cite{Dwyer}.

For any cohomology class $\beta \in H^3(BG, \integer)$ denote by
$F_\beta: EG \to BP\UU(\HH)$ a $G$-invariant map whose homotopy
class represents $\beta$.

If $X$ is a proper $G$-space we consider the map
$$X^K \to Maps(EG \times G/K, BP\UU(\HH))^G,  \ \ x \mapsto
F_\beta \circ \pi_1$$ at the level of fixed point sets for all $K$
 compact subgroups of $G$.

 This map can be assembled into a $G$-equivariant map
$$ \Psi_\beta: X \to \MM$$
where $\MM$ is the space defined in
Definition \ref{definition category MM}. By Proposition
\ref{proposition BB = MM} we know that the map $\Psi_\beta$
induces a map $\overline{\Psi}_\beta: X \to \BB$ and therefore
$$(\overline{\Psi}_\beta)^* \EE \to X$$
is a projective unitary $G$-equivariant stable bundle.

We claim that the $\beta$-twisted $G$-equivariant K-theory groups
of $X$ defined in \cite{Dwyer} are isomorphic to the twisted
$G$-equivariant K-theory groups
$$K^*_G(X,(\overline{\Psi}_\beta)^* \EE).$$

The proof will be postponed to a forthcoming publication \cite{BarcenasEspinozaUribeVelasquez} where the
appropriate tools for proving such fact, and many others,  will be
developed.

\bibliographystyle{abbrv}
\bibliography{TwistedK}

\def\cprime{$'$}
\begin{thebibliography}{10}

\bibitem{AtiyahSegal}
M.~Atiyah and G.~Segal.
\newblock Twisted {$K$}-theory.
\newblock {\em Ukr. Mat. Visn.}, 1(3):287--330, 2004.

\bibitem{Atiyah-Fredholm}
M.~F. Atiyah.
\newblock Algebraic topology and operators in {H}ilbert space.
\newblock In {\em Lectures in {M}odern {A}nalysis and {A}pplications. {I}},
  pages 101--121. Springer, Berlin, 1969.

\bibitem{Atiyah-book}
M.~F. Atiyah.
\newblock {\em {$K$}-theory}.
\newblock Advanced Book Classics. Addison-Wesley Publishing Company Advanced
  Book Program, Redwood City, CA, second edition, 1989.
\newblock Notes by D. W. Anderson.

\bibitem{atiyahsingerskew}
M.~F. Atiyah and I.~M. Singer.
\newblock Index theory for skew-adjoint {F}redholm operators.
\newblock {\em Inst. Hautes \'Etudes Sci. Publ. Math.}, (37):5--26, 1969.

\bibitem{BarcenasEspinozaUribeVelasquez}
N.~B\'arcenas, J.~Espinoza, B.~Uribe, and M.~Vel\'asquez.
\newblock Segal's spectral sequence in twisted equivariant {K}-theory for
  proper and discrete actions.
\newblock {\em arxiv:1307.1003}, 2013.

\bibitem{Barden}
D.~Barden.
\newblock Simply connected five-manifolds.
\newblock {\em Ann. of Math. (2)}, 82:365--385, 1965.

\bibitem{DavisLueck}
J.~F. Davis and W.~L{\"u}ck.
\newblock Spaces over a category and assembly maps in isomorphism conjectures
  in {$K$}- and {$L$}-theory.
\newblock {\em $K$-Theory}, 15(3):201--252, 1998.

\bibitem{Dwyer}
C.~Dwyer.
\newblock Twisted equivariant {$K$}-theory for proper actions of discrete
  groups.
\newblock {\em $K$-Theory}, 38(2):95--111, 2008.

\bibitem{Elmendorf}
A.~D. Elmendorf.
\newblock Systems of fixed point sets.
\newblock {\em Trans. Amer. Math. Soc.}, 277(1):275--284, 1983.

\bibitem{JamesSegal}
I.~M. James and G.~B. Segal.
\newblock On equivariant homotopy type.
\newblock {\em Topology}, 17(3):267--272, 1978.

\bibitem{Janich}
K.~J{\"a}nich.
\newblock Vektorraumb\"undel und der {R}aum der {F}redholm-{O}peratoren.
\newblock {\em Math. Ann.}, 161:129--142, 1965.

\bibitem{LashofMaySegal}
R.~K. Lashof, J.~P. May, and G.~B. Segal.
\newblock Equivariant bundles with abelian structural group.
\newblock In {\em Proceedings of the {N}orthwestern {H}omotopy {T}heory
  {C}onference ({E}vanston, {I}ll., 1982)}, volume~19 of {\em Contemp. Math.},
  pages 167--176, Providence, RI, 1983. Amer. Math. Soc.

\bibitem{Lueck1}
W.~L{\"u}ck.
\newblock Chern characters for proper equivariant homology theories and
  applications to {$K$}- and {$L$}-theory.
\newblock {\em J. Reine Angew. Math.}, 543:193--234, 2002.

\bibitem{Milnor1}
J.~Milnor.
\newblock Construction of universal bundles. {I}.
\newblock {\em Ann. of Math. (2)}, 63:272--284, 1956.

\bibitem{Milnor2}
J.~Milnor.
\newblock Construction of universal bundles. {II}.
\newblock {\em Ann. of Math. (2)}, 63:430--436, 1956.

\bibitem{Moerdijk-Madison}
I.~Moerdijk.
\newblock Orbifolds as groupoids: an introduction.
\newblock In {\em Orbifolds in mathematics and physics ({M}adison, {WI},
  2001)}, volume 310 of {\em Contemp. Math.}, pages 205--222. Amer. Math. Soc.,
  Providence, RI, 2002.

\bibitem{Murayama}
M.~Murayama and K.~Shimakawa.
\newblock Universal equivariant bundles.
\newblock {\em Proc. Amer. Math. Soc.}, 123(4):1289--1295, 1995.

\bibitem{Segal}
G.~Segal.
\newblock Classifying spaces and spectral sequences.
\newblock {\em Inst. Hautes \'Etudes Sci. Publ. Math.}, (34):105--112, 1968.

\bibitem{tomDieck}
T.~tom Dieck.
\newblock Faserb\"undel mit {G}ruppenoperation.
\newblock {\em Arch. Math. (Basel)}, 20:136--143, 1969.

\bibitem{Wolf}
J.~Wolf.
\newblock The cohomology of homogeneous spaces.
\newblock {\em Amer. J. Math.}, 99(2):312--340, 1977.

\end{thebibliography}

\end{document}